\definecolor{rouge}{rgb}{0.7,0.00,0.00}
\definecolor{vert}{rgb}{0.00,0.5,0.00}
\definecolor{bleu}{rgb}{0.00,0.00,0.8}
\newtheorem{theorem}{Theorem}[section]
\newtheorem*{theorem*}{Theorem}
\newtheorem{lemma}[theorem]{Lemma}
\newtheorem{proposition}[theorem]{Proposition}
\newtheorem{condition}{Condition}
\theoremstyle{definition}
\newtheorem{example}[theorem]{Example}
\newtheorem{remark}[theorem]{Remark}
\def \eref#1{\hbox{(\ref{#1})}}
\numberwithin{equation}{section}
\def\bf#1{\mathbf{#1}}
\def\geq{\geqslant}
\def\leq{\leqslant}
\def\RR{\mathbb{R}}
\def\PP{\mathbb{P}}
\def\EE{\mathbb{E}}
\def \eref#1{\hbox{(\ref{#1})}}
\def\EE{\mathbb{ E}}
\def\ep{{\epsilon}}
\begin{document}

\title[Averaging principle for SDEs]
{ Averaging principle for slow-fast stochastic differential equations with time dependent locally Lipschitz coefficients}

\author{Wei Liu}
\curraddr[Liu, W.]{ School of Mathematics and Statistics, Jiangsu Normal University, Xuzhou, 221116, China}
\email{weiliu@jsnu.edu.cn}

\author{Michael R\"{o}ckner}
\curraddr[R\"{o}ckner, M.]{Fakult\"{a}t f\"{u}r Mathematik, Universit\"{a}t Bielefeld, D-33501 Bielefeld, Germany, and Academy of Mathematics and Systems Science,
  Chinese Academy of Sciences (CAS), Beijing, 100190, P.R.China}
\email{roeckner@math.uni-bielefeld.de}

\author{Xiaobin Sun}
\curraddr[Sun, X.]{ School of Mathematics and Statistics, Jiangsu Normal University, Xuzhou, 221116, China}
\email{xbsun@jsnu.edu.cn}

\author{Yingchao Xie}
\curraddr[Xie, Y.]{ School of Mathematics and Statistics, Jiangsu Normal University, Xuzhou, 221116, China}
\email{ycxie@jsnu.edu.cn}

\begin{abstract}
This paper is devoted to studying the averaging principle for stochastic differential equations with slow and fast time-scales, where the drift coefficients satisfy local Lipschitz conditions with respect to the slow and fast variables, and the  coefficients in the slow equation depend on time $t$ and $\omega$. Making use of the techniques of time discretization and truncation,  we prove that the slow component strongly converges to the solution of the corresponding averaged equation.
\end{abstract}

\date{\today}
\subjclass[2000]{Primary 60H10, 34K33; Secondary 34D20}
\keywords{Averaging principle; Local Lipschitz; Time-dependent; Strong convergence; Stochastic differential equations}

\maketitle

\section{Introduction}

In this paper, we consider the following stochastic slow-fast system:
\begin{equation}\left\{\begin{array}{l}\label{Equation}
\displaystyle
d X^{\ep}_t = b(t, X^{\ep}_t, Y^{\ep}_t)dt+\sigma(t, X^{\ep}_t)d W^{1}_t,\quad X^{\ep}_0=x\in\RR^n, \\
d Y^{\ep}_t =\frac{1}{\ep}f(t,X^{\ep}_t, Y^{\ep}_t)dt+\frac{1}{\sqrt{\ep}}g(t,X^{\ep}_t, Y^{\ep}_t)d W^{2}_t,\quad Y^{\ep}_0=y\in\RR^m,
\end{array}\right.
\end{equation}
where $\ep$ is a small positive parameter describing the ratio of time scales between the slow component $X^{\ep}_t\in\RR^n$ and fast component $Y^{\ep}_t\in\RR^m$. Let $\{W^{1}_t\}_{t\geq 0}$ and $\{W^{2}_t\}_{t\geq 0}$ be mutually independent $d_1$ and $d_2$ dimensional standard Brownian motions on a complete probability space $(\Omega, \mathscr{F}, \mathbb{P})$ and $\{\mathscr{F}_{t},t\geq0\}$ is the natural filtration generated by $W^{1}_t$ and $W^{2}_t$. Let us consider the following given maps
\begin{eqnarray*}
&&b: [0, \infty)\times\RR^n\times\RR^m\times \Omega\rightarrow \RR^{n};\\
&& \sigma: [0, \infty)\times\RR^n\times \Omega\rightarrow \RR^{n\times d_1};\\
&&f: [0, \infty)\times\RR^n\times\RR^m\rightarrow \RR^{m};\\
&&g: [0, \infty)\times\RR^n\times\RR^m\rightarrow \RR^{m\times d_2}
\end{eqnarray*}
such that $b$, $\sigma$, $f$ and $g$ are continuous in $(x,y)\in \RR^{n}\times  \RR^{m}$ for each fixed $t\in [0, \infty)$, $\omega\in \Omega$, and progressively measurable, i.e., for each $t$, their restrictions to $[0, t]\times\Omega$ are $\mathcal{B}([0, t])\otimes\mathscr{F}_t$-measurable for any fixed $(x,y)\in \RR^{n}\times \RR^{m}$. In particular, for fixed $(x,y)\in \RR^{n}\times \RR^{m}$ and $t\in [0, \infty)$, $b(t,x,y)$ and $\sigma(t,x)$ are $\mathscr{F}_t$-measurable.

\vspace{1mm}
Under some reasonable assumptions, we intend to prove $X^{\ep}$ converges to $\bar{X}$ in the sense of $L^{p}(\Omega; C([0, T],\RR^n))$, $i.e.$, for some $p>0$,
\begin{eqnarray}
\lim_{\ep\rightarrow 0}\mathbb{E}\left(\sup_{t\in [0, T]}|X^{\ep}_t-\bar{X}_t|^p\right)=0,  \label{main result}
\end{eqnarray}
 where $\bar{X}$ is the solution of the corresponding averaged equation
\begin{equation}\left\{\begin{array}{l}
\displaystyle d\bar{X}_{t}=\bar{b}(t,\bar{X}_t)dt+\sigma(t, \bar{X}_t)d W^{1}_t.\\
\bar{X}_{0}=x,\end{array}\right. \label{1.3}
\end{equation}
Here $\bar{b}(t,x)=\int_{\RR^m}b(t,x,y)\mu^{t,x}(dy)$ and $\mu^{t,x}$ denotes the unique invariant measure for the transition semigroup  of the following frozen equation
\begin{equation}\left\{\begin{array}{l}\label{F}
\displaystyle
dY_{s}=f(t, x,Y_{s})ds+g(t, x,Y_{s})d\tilde{W}_{s}^{2},\\
Y_{0}=y,\\
\end{array}\right.
\end{equation}
where $\{\tilde {W}_{s}^{2}\}_{s\geq 0}$ is a $d_2$-dimensional standard Brownian motion on another complete probability space. Notice that for fixed $t\geq 0$ and $x\in\RR^n$, the solution of Eq. \eref{F}  is  a time-homogeneous Markov process, so its transition semigroup has a unique invariant measure $\mu^{t,x}$ depending on  $t$ and $x$ under appropriate conditions. Hence, the definition of the averaged coefficient $\bar b$ is meaningful.

\vspace{1mm}
Another simple understanding about the averaged coefficient is to change the time-dependent coefficients to time-independent coefficients. If $b$ and $\sigma$ are independent of $\omega$, then we define
$$
Z^{\ep}_t=\left(
                               \begin{array}{c}
                                 t \\
                                 X^{\ep}_t \\
                               \end{array}
                             \right)
,\quad \tilde{b}(z,y)=\left(\begin{array}{c}
                             1
                              \\
                               b(z,y) \\
                             \end{array}
                           \right)
$$
and
$$
\tilde{\sigma}(z)=\left(
                                  \begin{array}{cc}
                                   0 & 0 \\
                                    0 & \sigma(z,y) \\
                                  \end{array}
                                \right)
, \quad \tilde{W}^1_t=\left(
             \begin{array}{c}
               W_t \\
               W^1_t \\
             \end{array}
           \right).
$$
where $z\in \RR^{n+1}$, $\{W_t\}_{t\geq 0}$ is another one dimensional standard Brownian motion independent of $W^1_t$ and $W^2_t$. By an easy transformation,  the system  \eref{Equation} is then equivalent to   the following slow-fast system
\begin{equation}\left\{\begin{array}{l}\label{HSLS}
\displaystyle
d Z^{\ep}_t = \tilde b(Z^{\ep}_t, Y^{\ep}_t)dt+\tilde \sigma(Z^{\ep}_t)d \tilde W^{1}_t,\quad Z^{\ep}_0=\left(
                                                                                                          \begin{array}{c}
                                                                                                            0 \\
                                                                                                            x \\
                                                                                                          \end{array}
                                                                                                        \right)
, \\
d Y^{\ep}_t =\frac{1}{\ep}f(Z^{\ep}_t, Y^{\ep}_t)dt+\frac{1}{\sqrt{\ep}}g(Z^{\ep}_t, Y^{\ep}_t)d W^{2}_t,\quad Y^{\ep}_0=y,
\end{array}\right.
\end{equation}
where $Z^{\ep}_t\in\RR^{1+n}$ and $Y^{\ep}_t\in\RR^m$ are the corresponding slow and fast components for the new system \eref{HSLS} respectively. Notice that the system \eref{HSLS} is a time-independent case, and it is easy to see the corresponding frozen equation should be Eq. \eref{F}.

\vspace{1mm}
Although the coefficients depend on time in our paper,  it is different from the non-autonomous case in \cite{CL}. Recently, Cerrai \cite{CL} studied the averaging principle for non-autonomous slow-fast systems of stochastic reaction-diffusion equations, where the coefficients depend on time and satisfy the almost periodic in time condition. Because the corresponding frozen equation is a non-homogeneous Markov process and  the invariant measure does not exist any longer,  the assumption of almost periodic in time for the coefficients seems natural and it is used to define the averaged coefficient in a new way.

\vspace{1mm}
The theory of averaging principle has a long and rich history in multiscale problems, which arise from material sciences, chemistry, fluid dynamics, biology, climate dynamics and other application areas, see, e.g., \cite{BR,E,EL,HKW,PS,WTRY16} and references therein. The multiscale model is very common and involved by slow and fast components in mathematical models. For instance, dynamics of chemical reaction networks often take place on notably different times scales, from the order of nanoseconds ($\rm 10^{-9}$ s) to the order of several days. Studying the averaging principle is essential to describe the asymptotic behavior of slow component.

\vspace{1mm}
The averaging principle for stochastic differential equations (SDEs for short) was first studied by Khasminskii \cite{K1}, see, e.g. \cite{G,GJ,GL,L,XPG} (and the references therein) for further generalizations. However, most of the known results in the literature mainly considered the cases of coefficients satisfying Lipschitz continuous or sublinear growth conditions. It seems that there are few results about the non-Lipschitz case.  Veretennikov \cite{V0} established the averaging of systems of It\^{o} stochastic equations, where the drift coefficient $b$ is bounded and measurable $w.r.t.$ the slow variable and the other coefficients satisfy Lipschitz conditions. Then convergence in probability was obtained. Xu et al. \cite{XLM} proved the $L^2$ convergence for two-time-scales with special non-Lipschitz, but linear growth coefficients.

\vspace{1mm}
 However, in \cite{V0,XLM} it can not cover the superlinear growth case of drift coefficient $b$ such as $b(x,y)=x+y^3$. Hence, the motivation of this paper is to weaken the conditions on the drift coefficients $b$ and $f$ to local Lipschitz conditions $w.r.t.$ both the slow and fast variables, and to the case where the coefficients in the slow equation can depend on time $t$ and $\omega$,  which is inspired from the models in \cite[Chapter 3]{LR}.

\vspace{1mm}
Comparing with the known results, the main difficulties here are how to deal with the local Lispchitz continuity $w.r.t.$ the fast variable and the dependence on $\omega$ of the coefficients. In order to overcome these difficulties, we will continue to use the technique of stopping times very frequently. The main result is e.g. applicable to many slow-fast SDE models with polynomial drift coefficients. It is worth to mention that the approach based on time discretization will be used in the proof, so we need the local Lipschitz conditions instead of the one-sided type conditions in \cite[Theorem 3.1.1]{LR}.

\vspace{1mm}
The paper is organized as follows. In the next section, we introduce some notations
and assumptions that we use throughout the paper and formulate the main result. Section 3 is devoted to proving the strong convergence result. In Section 4, we will give some examples to illustrate the applicability of our result. The final section is the Appendix, where we present the detailed proof of existence and uniqueness of solutions for system \eref{Equation} and the corresponding averaged equation.

\vspace{1mm}
Please note that $C$ and $C_p$  denote some positive constants which may change from line to line throughout this paper, where $p$ is one or more than one parameter and $C_p$ is used to emphasize  that  the constant  depends on the corresponding parameter. $C_T$ will usually denote some nondecreasing function $w.r.t.$ $T$.

\section{Main results}\label{sec.prelim}

Now we impose the following assumptions on the coefficients $b,\sigma, f$ and $g$. Let $|\cdot|$ be the Euclidean norm, $\langle \cdot,\cdot\rangle$ be the Euclidean inner product and $\|\cdot\|$ be the matrix norm.

\vspace{0.3cm}
\smallskip
\noindent
\textbf{$({\bf H}_1)$}
(i) There exists $\theta_1\geq 0$ such that for any $t, R\geq 0$, $x_i\in\RR^n, y\in\RR^m$  with $|x_i|\leq R$,
\begin{eqnarray*}
2|b(t, x_1, y)-b(t, x_2, y)||x_1-x_2|+\|\sigma(t, x_1)-\sigma(t, x_2)\|^2\leq K_t(R)(1+|y|^{\theta_1})|x_1-x_2|^2,
\end{eqnarray*}
where $K_t(R)$ is an $\RR_{+}$-valued $\mathscr{F}_t$-adapted process satisfying  for all $R, T, p\in [0, \infty)$,
$$
\alpha_T(R):=\int^T_0 K_t(R)dt<\infty,\quad \text{on} \quad \Omega,
$$
$$
\EE e^{p\alpha_T(1)}<\infty, \quad \sup_{t\in [0, T]}\EE|K_t(1)|^4<\infty.
$$
Furthermore, there exists $R_0>0$, such that for any $R\geq R_0$, $T\geq 0$,
$$
\EE\int^T_0 [K_t(R)]^4 dt<\infty.
$$

(ii) There exist constants $\theta_2, \theta_3\geq 1$ and $\gamma_1\in(0, 1]$ such that for any $x\in\RR^n$, $y,y_1,y_2\in\RR^m$ and $T>0$ with $t,s\in[0, T]$,
\begin{eqnarray*}
|b(t, x, y_1)-b(t, x, y_2)|\leq C_T|y_1-y_2|\left[|y_1|^{\theta_2}+|y_2|^{\theta_2}+K_t(1)+|x|^{\theta_3}\right]
\end{eqnarray*}
and
\begin{eqnarray*}
|b(t, x, y)-b(s, x, y)|\leq C_T|t-s|^{\gamma_1}\left[|y|^{\theta_2}+|x|^{\theta_3}+Z_T\right], \quad \text{on} \quad \Omega,
\end{eqnarray*}
where $C_T>0$ and $Z_T$ is some random variable satisfying $\EE Z^2_T<\infty$.

(iii) There exist $\lambda_1\geq0$, $C>0$, $\theta_4\geq 2$ and $\theta_5, \theta_6\geq 1$ such that for any $t>0, x\in\RR^n, y\in\RR^m$,
\begin{eqnarray*}
2\langle x, b(t, x,y)\rangle\leq K_t(1)(1+|x|^{2})+\lambda_1|y|^{\theta_4}
\end{eqnarray*}
and
\begin{eqnarray*}
|b(t,x,y)|\leq K_t(1)+C(|x|^{\theta_5}+|y|^{\theta_6}),\quad \|\sigma(t,x)\|^2\leq K_t(1)+C|x|^2.
\end{eqnarray*}

 \smallskip
\noindent
\textbf{$({\bf H}_2)$}
(i) There exists $\beta>0$ such that for any $t\geq 0$, $x\in\RR^n, y_1,y_2\in\RR^m$,
\begin{eqnarray}
2\langle f(t,x, y_1)-f(t,x, y_2), y_1-y_2\rangle+\|g(t,x, y_1)-g(t,x, y_2)\|^2\leq -\beta|y_1-y_2|^2. \label{sm}
\end{eqnarray}

(ii) For any $T>0$, there exist $\gamma_2\in (0,1]$, $C_T>0$, $\alpha_i\geq 1$, $i=1,2,3,4$ such that for any $t, s\in [0,T]$ and $x_i\in\RR^n, y_i\in\RR^m$, $i=1,2$,
\begin{eqnarray*}
|f(t,x_1, y_1)-f(s,x_2, y_1)|\leq C_T (|t-s|^{\gamma_2}+|x_1-x_2|)(1+|x_1|^{\alpha_1}+|x_2|^{\alpha_1}+|y_1|^{\alpha_2});
\end{eqnarray*}
\begin{eqnarray*}
\|g(t, x_1, y_1)-g(s, x_2, y_2)\|\leq C_T(|t-s|^{\gamma_2}+|x_1-x_2|+|y_1-y_2|);
\end{eqnarray*}
\begin{eqnarray*}
 |f(t, x_1,y_1)|\leq C_T(1+|x_1|^{\alpha_3}+|y_1|^{\alpha_4});
\end{eqnarray*}
\begin{eqnarray*}
 \|g(t, x_1, y_1)\|\leq C_T(1+|x_1|+|y_1|).
\end{eqnarray*}

 \smallskip
\noindent
\textbf{$({\bf A}_{k})$} For some fixed $k\geq 2$ and any $T>0$, there exist $C_{T,k}, \beta_k>0$ such that for any $t\in [0, T]$, $x\in\RR^n, y\in\RR^m$,
\begin{eqnarray}
2\langle y, f(t, x, y)\rangle+(k-1)\|g(t,x,y)\|^2\leq -\beta_k|y|^2-\lambda_2|y|^{\theta_4}+C_{T,k} (|x|^{\frac{4}{\theta_4}}+1),\label{C2.2}
\end{eqnarray}
where $\lambda_2=0$ if $\lambda_1=0$, and $\lambda_2>0$ otherwise.

\begin{remark}\label{R2.1} 
 (1) Condition \eref{sm} is called strict monotonicity condition, which ensures that exponential ergodicity holds (see Proposition \ref{Rem 4.1} below). Condition \eref{C2.2} is called strict coercivity condition, which is used to guarantee the existence of invariant measures for the frozen equation (see Eq. (\ref{FEQ1}) below). Hence the uniqueness of invariant measures for the frozen equation follows (see Proposition \ref{invariant} below).

(2) The powers $\theta_4$ and $\frac{4}{\theta_4}$ in \eref{C2.2} are used to ensure the existence and uniqueness of solutions to the system \eref{Equation}  and the corresponding averaged equation (see Eq. (\ref{3.1}) below) respectively.

(3) If $k_1>k_2\geq 2$, then \textbf{$({\bf A}_{k_1})$} implies \textbf{$({\bf A}_{k_2})$}.

(4) We will give some  examples in Section 4 to show  the assumptions above hold for many drift coefficients of polynomial type.
\end{remark}

The following theorem is the existence and uniqueness result for system \eref{Equation}, which can be obtained  using the classical result due to Krylov (cf. \cite[Chapter 3]{LR}). The detailed proof will be given in the Appendix.
\begin{theorem}\label{main}
Suppose that \textbf{$({\bf H}_1)$}, \textbf{$({\bf H}_2)$} and \textbf{$({\bf A}_2)$}hold. Let $\ep_0=\frac{\lambda_2}{\lambda_1}$ if $\lambda_1>0$, and $\ep_0=1$ otherwise. Then for any $\ep\in (0, \ep_0)$, any given initial values $x\in\RR^n, y\in \RR^m$, there exists a unique solution $\{(X^{\ep}_t,Y^{\ep}_t), t\geq 0\}$ to system \eref{Equation} and for all $T>0$,
$(X^{\ep},Y^{\ep})\in C([0,T]; \RR^n)\times C([0,T]; \RR^m), \PP-a.s.$ and for all $t\in [0, T]$,
\begin{equation}\left\{\begin{array}{l}\label{mild solution}
\displaystyle
X^{\ep}_t=x+\int^t_0b(s, X^{\ep}_s, Y^{\ep}_s)ds+\int^t_0\sigma(s, X^{\ep}_s)d W^{1}_s,\\
\displaystyle
Y^{\ep}_t=y+\frac{1}{\ep} \int^t_0f(s,X^{\ep}_s,Y^{\ep}_s)ds
+\frac{1}{\sqrt{\ep}} \int^t_0 g(s, X^{\ep}_s, Y^{\ep}_s)dW^{2}_s.
\end{array}\right.
\end{equation}
\end{theorem}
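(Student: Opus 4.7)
The plan is to recast the coupled slow--fast system as a single $(n{+}m)$-dimensional SDE and apply the classical well-posedness theorem for SDEs with random, time-dependent, locally monotone coefficients of Krylov type (as developed in Chapter 3 of \cite{LR}). Setting $U^{\ep}_t := (X^{\ep}_t, Y^{\ep}_t)^{\top}$, $B^{\ep}(t,u) := (b(t,x,y),\,\ep^{-1}f(t,x,y))^{\top}$, $\Sigma^{\ep}(t,u) := \mathrm{diag}(\sigma(t,x),\,\ep^{-1/2}g(t,x,y))$, and $W_t := ((W^1_t)^{\top},(W^2_t)^{\top})^{\top}$, the system becomes $dU^{\ep}_t = B^{\ep}(t,U^{\ep}_t)\,dt + \Sigma^{\ep}(t,U^{\ep}_t)\,dW_t$. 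Local monotonicity is obtained by decomposing drift differences through the midpoint $(x_2,y_1)$: $({\bf H}_1)(i)$ (with weight $1+R^{\theta_1}$) handles the $b$-increment in $x$; $({\bf H}_1)(ii)$ together with Young's inequality handles the $b$-increment in $y$; $({\bf H}_2)(i)$ produces the strict dissipation $-\beta|y_1-y_2|^2$ from the $(f,g)$-diagonal difference; and $({\bf H}_2)(ii)$ absorbs the cross term in $f$ and the off-diagonal $g$-difference. One concludes
\[
2\langle u_1-u_2,\, B^{\ep}(t,u_1)-B^{\ep}(t,u_2)\rangle + \|\Sigma^{\ep}(t,u_1)-\Sigma^{\ep}(t,u_2)\|^2 \leq \rho^{\ep}_t(R)\,|u_1-u_2|^2
\]
for $|u_i|\leq R$, with an $\mathscr{F}_t$-adapted process $\rho^{\ep}_t(R)$ satisfying $\int_0^T \rho^{\ep}_t(R)\,dt < \infty$ $\PP$-a.s., thanks to the hypothesis $\alpha_T(R) < \infty$ in $({\bf H}_1)(i)$.

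The crucial coercivity step combines $({\bf H}_1)(iii)$ on $2\langle x,b\rangle + \|\sigma\|^2$ with $({\bf A}_2)$ on $2\langle y,f\rangle + \|g\|^2$ to give
\[
2\langle u,\,B^{\ep}(t,u)\rangle + \|\Sigma^{\ep}(t,u)\|^2 \leq C\,K_t(1)(1+|x|^2) + \Bigl(\lambda_1 - \tfrac{\lambda_2}{\ep}\Bigr)|y|^{\theta_4} - \tfrac{\beta_2}{\ep}|y|^2 + \tfrac{C}{\ep}\bigl(1+|x|^{4/\theta_4}\bigr).
\]
The condition $\ep < \ep_0 = \lambda_2/\lambda_1$ (or the trivial case $\lambda_1 = 0$) forces the $|y|^{\theta_4}$-coefficient to be nonpositive, while $\theta_4 \geq 2$ allows $|x|^{4/\theta_4}$ to be dominated by $1+|x|^2$. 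One thus obtains the bound $2\langle u,B^{\ep}\rangle + \|\Sigma^{\ep}\|^2 \leq C_{\ep}(1+K_t(1))(1+|u|^2)$, where $\sup_{t\in[0,T]} \EE |K_t(1)|^4 < \infty$ from $({\bf H}_1)(i)$ supplies the integrability required by the abstract framework.

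Polynomial growth of $B^{\ep}$ and $\Sigma^{\ep}$ follows directly from $({\bf H}_1)(iii)$ and $({\bf H}_2)(ii)$. With local monotonicity, coercivity and growth established, the Krylov-type theorem in the Liu--R\"ockner formulation yields a unique strong solution $(X^{\ep},Y^{\ep}) \in C([0,T];\RR^n) \times C([0,T];\RR^m)$ satisfying \eref{mild solution}. The main obstacle lies in the coercivity: the quadratic Lyapunov function $|u|^2$ inherits a destabilising $\lambda_1|y|^{\theta_4}$ contribution from the superlinear $y$-growth of $b$ allowed by $({\bf H}_1)(iii)$, and it can only be cancelled by the $-\lambda_2|y|^{\theta_4}/\ep$ dissipation produced by $({\bf A}_2)$ on the fast scale; the threshold $\ep < \ep_0$ is precisely calibrated for this cancellation. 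A secondary technical point is that $K_t(R)$ is random and merely adapted, so uniqueness is established pathwise by localising at the stopping times $\tau_R := \inf\{t : |X^{\ep}_t| + |Y^{\ep}_t| \geq R\}$, applying Gronwall's lemma up to each $\tau_R$, and letting $R \to \infty$.
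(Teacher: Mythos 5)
Your proposal is correct and follows essentially the same route as the paper's Appendix proof: both recast the system as a single $(n+m)$-dimensional SDE, verify local weak monotonicity by splitting drift differences through the intermediate point $(x_2,y_1)$ using \textbf{$({\bf H}_1)$} and \textbf{$({\bf H}_2)$}, obtain coercivity by letting the $-\lambda_2|y|^{\theta_4}/\ep$ term from \textbf{$({\bf A}_2)$} cancel the $\lambda_1|y|^{\theta_4}$ term from \textbf{$({\bf H}_1)$}(iii) precisely when $\ep<\ep_0$, and then invoke \cite[Theorem 3.1.1]{LR}. Your closing remark on pathwise localization is an extra gloss on machinery already contained in that cited theorem, not a deviation.
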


Now we formulate the main result of this work.
\begin{theorem}\label{main result 1}
Suppose that \textbf{$({\bf H}_1)$}  and \textbf{$({\bf H}_2)$}  hold.

(i) If $\lambda_1=0$ in \textbf{$({\bf H}_1)$} and \textbf{$({\bf A}_{\tilde \theta_1})$} holds for $\tilde \theta_1=\max\{4\theta_1, 2\theta_2+2, 2\theta_6, 4\alpha_2\}$. Then for any $p>0$  we have
\begin{eqnarray}
\lim_{\ep\rightarrow 0}\mathbb{E}\left(\sup_{t\in [0, T]}|X_{t}^{\ep}-\bar{X}_{t}|^p\right)=0. \label{R1}
\end{eqnarray}

(ii) If $\lambda_1>0$ in \textbf{$({\bf H}_1)$} and \textbf{$({\bf A}_{k})$} holds for some $k>\tilde \theta_2$ with $\tilde \theta_2=\max\{4\theta_1, 2\theta_2+2, 2\theta_6, 4\alpha_2, \newline \theta_5\theta_4, 2\alpha_1\theta_4\}$. Then for any $0<p<\frac{2k}{\theta_4}$ we have
\begin{eqnarray}
\lim_{\ep\rightarrow 0}\mathbb{E}\left(\sup_{t\in [0, T]}|X_{t}^{\ep}-\bar{X}_{t}|^{p}\right)=0. \label{R2}
\end{eqnarray}
Here $\bar{X}$ is the solution of the following averaged equation
\begin{equation}\left\{\begin{array}{l}
\displaystyle d\bar{X}_{t}=\bar{b}(t,\bar{X}_t)dt+\sigma(t,\bar{X}_t)d W^{1}_t,\\
\bar{X}_{0}=x,\end{array}\right. \label{1.3}
\end{equation}
where $\bar{b}(t,x)=\int_{\RR^m}b(t,x,y)\mu^{t,x}(dy)$ and $\mu^{t,x}$ denotes the unique invariant measure for the transition semigroup of the corresponding frozen equation
\begin{equation}\left\{\begin{array}{l}\label{FEQ1}
\displaystyle
dY_{s}=f(t,x,Y_{s})ds+g(t,x,Y_{s})d\tilde {W}_{s}^{2},\\
Y_{0}=y,\\
\end{array}\right.
\end{equation}
where $\{\tilde{W}_{s}^{2}\}_{s\geq 0}$ is a $d_2$-dimensional Brownian motion on another complete probability space $(\tilde{\Omega}, \tilde{\mathscr{F}}, \tilde{\mathbb{P}})$.
\end{theorem}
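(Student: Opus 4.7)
\smallskip
\noindent\textbf{Proof strategy.} The plan is to adapt the classical Khasminskii time-discretization scheme to the local Lipschitz, time- and $\omega$-dependent setting by a systematic use of truncation stopping times. First I would establish uniform-in-$\ep$ a priori moment estimates for $(X^{\ep},Y^{\ep})$ and for $\bar X$. Applying It\^o's formula to $|Y^{\ep}_t|^{k}$ and using $({\bf A}_{k})$ yields
\begin{equation*}
\sup_{\ep\in(0,\ep_0)}\sup_{t\in[0,T]}\EE|Y^{\ep}_t|^{k}\leq C_T\Big(1+|y|^{k}+\EE\sup_{t\in[0,T]}|X^{\ep}_t|^{2k/\theta_4}\Big),
\end{equation*}
while It\^o applied to $|X^{\ep}_t|^{p}$, together with $({\bf H}_1)$(iii), Young's inequality and the exponential integrability $\EE e^{p\alpha_T(1)}<\infty$, gives arbitrary $L^p$ bounds for $X^{\ep}$ when $\lambda_1=0$, and bounds up to $p=2k/\theta_4$ when $\lambda_1>0$. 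This dichotomy is the direct origin of the two regimes in the statement. Analogous estimates hold for $\bar X$.

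\smallskip
Next, for a discretization step $\delta=\delta(\ep)\to 0$ (to be tuned), I would introduce the auxiliary fast process obtained by freezing $(t,X^{\ep}_t)$ on each subinterval $[k\delta,(k+1)\delta)$:
\begin{equation*}
\hat Y^{\ep}_t=\hat Y^{\ep}_{k\delta}+\frac{1}{\ep}\int_{k\delta}^{t} f(k\delta, X^{\ep}_{k\delta},\hat Y^{\ep}_s)\,ds+\frac{1}{\sqrt{\ep}}\int_{k\delta}^{t} g(k\delta, X^{\ep}_{k\delta},\hat Y^{\ep}_s)\,dW^{2}_s.
\end{equation*}
Using the strict monotonicity \eref{sm} together with the H\"older-in-$t$ and Lipschitz-in-$x$ regularity from $({\bf H}_2)$(ii), one controls $\EE|Y^{\ep}_t-\hat Y^{\ep}_t|^{2}$ by a positive power of $\delta$. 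The heart of the argument is then the averaging estimate
\begin{equation*}
\EE\bigg|\int_{k\delta}^{(k+1)\delta}\!\!\big[b(k\delta, X^{\ep}_{k\delta}, \hat Y^{\ep}_s)-\bar b(k\delta, X^{\ep}_{k\delta})\big]\,ds\bigg|^{2}\leq C\,\ep\delta,
\end{equation*}
which is obtained, after the rescaling $s\mapsto \ep s+k\delta$ and conditioning on $\mathscr{F}_{k\delta}$, from the exponential ergodicity of the frozen semigroup (cf.\ Proposition~\ref{Rem 4.1}) and the Lipschitz dependence of $\mu^{t,x}$ on $(t,x)$.

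\smallskip
To pass from $\hat Y^{\ep}$ back to $X^{\ep}-\bar X$, I would introduce the stopping time $\tau^{\ep}_R:=\inf\{t\geq 0:\ |X^{\ep}_t|\vee|\bar X_t|\vee\alpha_t(R)\geq R\}$, which simultaneously truncates the two solutions and the $\omega$-dependent Lipschitz rate. Applying It\^o to $|X^{\ep}_{t\wedge\tau^{\ep}_R}-\bar X_{t\wedge\tau^{\ep}_R}|^{2}$ and decomposing the drift difference through the intermediates $b(t,X^{\ep}_t,\hat Y^{\ep}_t)$, $b(k\delta,X^{\ep}_{k\delta},\hat Y^{\ep}_t)$ and $\bar b(k\delta,X^{\ep}_{k\delta})$, each term being controlled via $({\bf H}_1)$(i)-(ii), a random Gronwall inequality with the $\mathscr{F}_t$-adapted rate $K_t(R)(1+|Y^{\ep}_t|^{\theta_1})$ (justified by $\EE e^{p\alpha_T(R)}<\infty$ combined with the high $L^q$-moment bounds on $Y^{\ep}$) produces a bound of the form $C_{R,T}(\delta^{\gamma}+\ep/\delta)^{p/2}$, which vanishes upon choosing e.g.\ $\delta=\ep^{1/2}$. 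The truncation is finally removed by the fact that $\PP(\tau^{\ep}_R<T)\to 0$ as $R\to\infty$ uniformly in $\ep$, combined with H\"older's inequality on $\{\tau^{\ep}_R<T\}$ and the higher moment bounds on $X^{\ep}-\bar X$; the restriction $p<2k/\theta_4$ in case (ii) is exactly the constraint needed at this last H\"older step.

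\smallskip
The principal obstacle is the combination of an $\omega$-dependent and only exponentially integrable Lipschitz rate $K_t(R)$ with the local Lipschitz dependence on the unbounded fast variable $Y^{\ep}$: a naive Gronwall after taking expectations fails, so one must work pathwise on the event $\{\alpha_T(R)\leq N\}$, control its complement by the exponential integrability $\EE e^{p\alpha_T(R)}<\infty$, and then balance the three parameters $N$, $R$ and $\delta=\delta(\ep)$ against each other. A related subtlety is that the polynomial factor $(1+|y|^{\theta_1})$ in $({\bf H}_1)$(i) forces moment bounds on $Y^{\ep}$ of order $\tilde\theta_1$ (resp.\ $\tilde\theta_2$), which is exactly why hypothesis $({\bf A}_{\tilde\theta_1})$ (resp.\ $({\bf A}_k)$ with $k>\tilde\theta_2$) appears with these specific exponents in Theorem~\ref{main result 1}.
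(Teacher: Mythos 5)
Your proposal is correct and follows essentially the same route as the paper: uniform a priori moment bounds via It\^o's formula with the exponential weight $e^{-p\alpha_t(1)}$, the Khasminskii auxiliary process $\hat Y^{\ep}$, the per-interval averaging estimate of order $\ep\delta$ obtained by rescaling, conditioning on $\mathscr{F}_{k\delta}$ and the exponential ergodicity of the frozen semigroup, truncation by a stopping time controlling $X^{\ep}$, $\bar X$ and the random Lipschitz rate, and finally removal of the truncation by Chebyshev and H\"older (which is indeed where the restriction $p<2k/\theta_4$ enters in case (ii)). The only cosmetic difference is that the paper's stopping time additionally truncates the time integrals $\int_0^t|Y^{\ep}_s|^{2\theta_2}ds$ and $\int_0^t|\hat Y^{\ep}_s|^{4\theta_1\vee 2\theta_2}ds$ rather than relying on moment bounds for the fast variable inside the Gronwall step, but this is the same idea you describe when you propose working pathwise on truncated events.
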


\section{Proof of the Main Result}

This section is devoted to proving Theorem \ref{main result 1}.
The proof consists of the following steps.
Firstly, we give some  a-priori estimates for the solution $(X^{\ep}_t, Y^{\ep}_t)$
to the system (\ref{Equation}).
Secondly, following the  discretization techniques inspired by Khasminskii in \cite{K1},
we introduce an auxiliary process $(\hat{X}_{t}^{\ep},\hat{Y}_{t}^{\ep})$ for which we derive uniform bounds. Making use of the  stopping time techniques inspired by \cite{DSXZ}, we control the (difference) process $X^{\ep}_t-\hat{X}_{t}^{\ep}$ before the stopping time.
Thirdly, based on the ergodic property of the frozen equation, we obtain appropriate control of $\hat{X}_{t}^{\ep}-\bar{X}_{t}$ before the stopping time.
Finally, we shall use the a-priori estimates of the solution to control the difference after the stopping time. Note that we always assume that \textbf{$({\bf H}_1)$} and \textbf{$({\bf H}_2)$} hold and from now on we fix some initial values $x\in \RR^{n}, y\in \RR^{m}$  in this section.

\vskip 0.2cm

 \subsection{Some a-priori estimates of $(X^{\ep}_t, Y^{\ep}_t)$}

In this subsection, we prove some uniform bounds $w.r.t.$ $\ep \in (0,\ep_0)$ for the moments of the solution ($X_{t}^{\ep}, Y_{t}^{\ep})$ to system (\ref{Equation}).
\begin{lemma} \label{PMY} 
(i) If $\lambda_1=0$ in \textbf{$({\bf H}_1)$} and \textbf{$({\bf A}_{k\theta_4})$} holds for some $k\geq \frac{2}{\theta_4}$, then for any $T, p>0$, there exist  positive constants $C_{T, p}, C_{T,k}$ such that
\begin{eqnarray*}
\sup_{\ep\in(0,\ep_0)}\mathbb{E}\left(\sup_{t\in [0, T]}|X_{t}^{\ep}|^{p}\right)\leq C_{T, p}(1+|x|^{p})\label{X}
\end{eqnarray*}
and
\begin{eqnarray*}
\sup_{\ep\in(0,\ep_0)}\sup_{t\in [0, T]}\mathbb{E}|Y_{t}^{\ep}|^{k\theta_4}\leq C_{T,k}(1+|x|^{2k}+|y|^{k\theta_4}).\label{Y}
\end{eqnarray*}

(ii) If $\lambda_1>0$ in \textbf{$({\bf H}_1)$} and \textbf{$({\bf A}_{k\theta_4})$} holds for some $k\geq 1$, then for any $T>0$, $k'<k$, there exists a positive constant $C_{T,k}$ such that
\begin{eqnarray*}
\sup_{\ep\in(0,\ep_0)}\mathbb{E}\left(\sup_{t\in [0, T]}|X_{t}^{\ep}|^{2k'}\right)\leq C_{T,k}(|x|^{2k'}+|y|^{k'\theta_4}+1)\label{X1}
\end{eqnarray*}
and
\begin{eqnarray*}
\sup_{\ep\in(0,\ep_0)}\sup_{t\in [0, T]}\mathbb{E}|Y_{t}^{\ep}|^{k'\theta_4}\leq C_{T,k}(|x|^{2k'}+|y|^{k'\theta_4}+1).\label{Y2}
\end{eqnarray*}
\end{lemma}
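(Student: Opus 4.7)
The plan is to handle part (i) first, where $\lambda_1=0$ decouples the energy estimates of $X^\ep$ and $Y^\ep$, and then to treat the coupled case (ii) via a joint Lyapunov function $V_t=|X^\ep_t|^{2k'}+\mu\ep|Y^\ep_t|^{k'\theta_4}$ with $\mu>0$ chosen large enough to cancel the cross terms.

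For part (i), I would apply It\^o's formula to $|X^\ep_t|^{2p}$. Since the $\lambda_1|y|^{\theta_4}$ term in $({\bf H}_1)(iii)$ vanishes, the bound $2\langle x,b\rangle\leq K_t(1)(1+|x|^2)$ combined with $\|\sigma(t,x)\|^2\leq K_t(1)+C|x|^2$ yields
$$
d|X^\ep_t|^{2p}\leq C_p(K_t(1)+1)(1+|X^\ep_t|^{2p}) dt + dM_t .
$$
Standard BDG bounds on the martingale followed by a stochastic Gronwall argument, which is available thanks to $\EE e^{p\alpha_T(1)}<\infty$ in $({\bf H}_1)(i)$, yield $\EE\sup_{t\leq T}|X^\ep_t|^{2p}\leq C_{T,p}(1+|x|^{2p})$ uniformly in $\ep$. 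For the $Y$-moment, It\^o on $|Y^\ep_t|^{k\theta_4}$, $({\bf A}_{k\theta_4})$, and the Young inequality $|Y|^{k\theta_4-2}|X|^{4/\theta_4}\leq \eta|Y|^{k\theta_4}+C_\eta|X|^{2k}$ lead, after taking expectations, to the scalar inequality
$$
\frac{d}{dt}\EE|Y^\ep_t|^{k\theta_4}\leq -\frac{C_1}{\ep}\EE|Y^\ep_t|^{k\theta_4}+\frac{C_2}{\ep}\bigl(1+\EE|X^\ep_t|^{2k}\bigr) ;
$$
variation of constants, together with the $X$-bound already obtained, gives the claim uniformly in $\ep$.

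For part (ii), the $\lambda_1|Y|^{\theta_4}$ term in $({\bf H}_1)(iii)$ prevents a direct $X$-estimate. I would then apply It\^o to $V_t=|X^\ep_t|^{2k'}+\mu\ep|Y^\ep_t|^{k'\theta_4}$. The coupling $\lambda_1|X|^{2k'-2}|Y|^{\theta_4}$ in $d|X^\ep_t|^{2k'}$ is split by Young's inequality as $\eta'|X|^{2k'}+C_{\eta'}|Y|^{k'\theta_4}$, and condition $({\bf A}_{k'\theta_4})$, available because $k'<k$ combined with Remark~\ref{R2.1}(3) gives $({\bf A}_{k\theta_4})\Rightarrow ({\bf A}_{k'\theta_4})$, provides an $\ep^{-1}$-scaled dissipation $-C\beta|Y|^{k'\theta_4}/\ep$; the prefactor $\mu\ep$ converts this into the $\ep$-independent damping $-\mu C\beta|Y|^{k'\theta_4}$. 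Choosing $\mu$ so that $\mu C\beta>\lambda_1 C_{\eta'}$, all $|Y|^{k'\theta_4}$ contributions in $dV_t$ combine with a negative coefficient and can be dropped, leaving
$$
dV_t\leq C(K_t(1)+1)(1+V_t) dt + dM_t .
$$
The same BDG plus stochastic Gronwall argument as in part (i) then yields $\EE\sup_{t\leq T}V_t\leq C_T(1+|x|^{2k'}+\mu\ep|y|^{k'\theta_4})\leq C_T(1+|x|^{2k'}+|y|^{k'\theta_4})$ uniformly in $\ep\in(0,\ep_0)$, from which the bound on $\EE\sup_t|X^\ep_t|^{2k'}$ follows. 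The $Y$-bound is then obtained by taking expectation in the It\^o formula for $|Y^\ep_t|^{k'\theta_4}$ and applying variation of constants using the just-established $X$-bound.

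The main obstacle will be the calibration in part (ii): the $\ep$-weight in front of $|Y|^{k'\theta_4}$ is precisely what converts the $O(\ep^{-1})$ fast-equation dissipation into an $O(1)$ damping in $dV_t$, and the strict inequality $k'<k$ provides the slack needed both for Young's inequality on the coupling term and for $({\bf A}_{k'\theta_4})$ to be obtainable from $({\bf A}_{k\theta_4})$. A secondary technicality is propagating the random coefficient $K_t(1)$ through Gronwall, which relies on the exponential moment assumption $\EE e^{p\alpha_T(1)}<\infty$ and possibly a localization on the stopping times $\tau_R=\inf\{t\geq 0:\alpha_t(1)\geq R\}$.
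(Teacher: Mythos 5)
Your part (i) follows essentially the same route as the paper: the paper absorbs the random coefficient $K_t(1)$ by applying It\^o's formula to the weighted quantity $e^{-\frac{p}{2}\alpha_t(1)}|X^{\ep}_t|^p$ and then removes the weight by H\"older's inequality using $\EE e^{p\alpha_T(1)}<\infty$, which is exactly the exponential-weight incarnation of your ``stochastic Gronwall''; the $Y$-estimate via It\^o, $({\bf A}_{k\theta_4})$, Young's inequality and the comparison theorem is identical. In part (ii) you genuinely diverge: the paper does \emph{not} form a single Lyapunov function. It first derives an inequality bounding $\EE\sup_t(e^{-k\alpha_t(1)}|X^{\ep}_t|^{2k})$ by $\int_0^T\EE(e^{-k\alpha_t(1)}|Y^{\ep}_t|^{k\theta_4})\,dt$ plus Gronwallable terms, then separately bounds $\EE(e^{-k\alpha_t(1)}|Y^{\ep}_t|^{k\theta_4})$ by $|y|^{k\theta_4}+C_{T,k}\EE\sup_{s\le t}(e^{-k\alpha_s(1)}|X^{\ep}_s|^{2k})+C_{T,k}$ via the comparison theorem, the point being that $\frac{1}{\ep}\int_0^t e^{-\tilde\beta_k(t-s)/\ep}ds\le C$ uniformly in $\ep$; substituting and applying Gronwall closes the loop, and the drop from $k$ to $k'$ occurs only at the end when the exponential weight is removed. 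Your $V_t=|X^{\ep}_t|^{2k'}+\mu\ep|Y^{\ep}_t|^{k'\theta_4}$ exploits the same cancellation of the two $\ep^{-1}$'s, packaged as a one-shot drift computation; it is arguably cleaner at the level of the drift, while the paper's substitution avoids ever having to control the fast component's martingale in a supremum.

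That last point is where your write-up has a real gap. To get $\EE\sup_{t\le T}V_t$ you must apply BDG to the stochastic integral $\mu\sqrt{\ep}\,k'\theta_4\int_0^t|Y^{\ep}_s|^{k'\theta_4-2}\langle Y^{\ep}_s,g\,dW^2_s\rangle$, whose dominant contribution is $C\mu\sqrt{\ep}\,\EE\bigl[\int_0^T|Y^{\ep}_s|^{2k'\theta_4}ds\bigr]^{1/2}$. Splitting this as $\delta\,\EE\sup_t(\mu\ep|Y^{\ep}_t|^{k'\theta_4})+\frac{C^2\mu}{4\delta}\EE\int_0^T|Y^{\ep}_s|^{k'\theta_4}ds$ forces $\delta\le\frac14$ for absorption into the left-hand side, but then the coefficient $\frac{C^2\mu}{4\delta}$ of $\EE\int_0^T|Y^{\ep}_s|^{k'\theta_4}ds$ need not be dominated by the damping rate $\mu c_1$ you retained from the drift, since $C$ comes from BDG and $c_1$ from $\beta_{k'\theta_4}$; nor does this term carry any power of $\ep$, and bounding $|Y|^{k'\theta_4}\le(\mu\ep)^{-1}V$ ruins Gronwall with an $e^{CT/\ep}$ constant. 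The clean repair is to note that the lemma only asserts $\sup_t\EE|Y^{\ep}_t|^{k'\theta_4}$, not $\EE\sup_t$: first take plain expectations (no BDG) to obtain the $\sup_t\EE$-bounds for both components, and only then take $\EE\sup_t$ of the $X$-component alone, using the already-established bound on $\int_0^T\EE|Y^{\ep}_s|^{k'\theta_4}ds$ to control the coupling term. This is, in effect, exactly the ordering the paper's two-step substitution enforces. With that correction (and the standard remark that one should work at an intermediate level $k''\in(k',k)$ so that H\"older can absorb the exponential weight at the end), your argument goes through.
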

\begin{proof}
$(i)$  According to It\^{o}'s formula and \textbf{$({\bf H}_1)$} with $\lambda_1=0$,  we have for any $p\geq 2$,
\begin{eqnarray*}
&&e^{-\frac{p}{2}\alpha_t(1)}|X_{t}^{\ep}|^{p}\\
=\!\!\!\!\!\!\!\!&&|x|^{p}+\frac{p}{2}\int^t_0 e^{-\frac{p}{2}\alpha_s(1)}\left[-K_s(1)\right]|X_{s}^{\ep}|^p ds+p\int_{0} ^{t}e^{-\frac{p}{2}\alpha_s(1)}|X_{s}^{\ep}|^{p-2}\langle X_{s}^{\ep}, b(s,X_{s}^{\ep}, Y_{s}^{\ep})\rangle ds\\
&&+\frac{p}{2}\int_{0} ^{t}e^{-\frac{p}{2}\alpha_s(1)}|X_{s}^{\ep}|^{p-2}\|\sigma(s,X_{s}^{\ep})\|^2ds+\frac{p(p-2)}{2}\int_{0} ^{t}e^{-\frac{p}{2}\alpha_s(1)}|X_{s}^{\ep}|^{p-4}\left|\langle X_{s}^{\ep}, \sigma(s,X_{s}^{\ep})\rangle\right|^2 ds\\
&&
+p\int_{0} ^{t}e^{-\frac{p}{2}\alpha_s(1)}|X_{s}^{\ep}|^{p-2}\langle X_{s}^{\ep}, \sigma(s,X_{s}^{\ep})dW^{1}_s\rangle\\
\leq\!\!\!\!\!\!\!\!&&|x|^{p}+\frac{p}{2}\int^t_0 e^{-\frac{p}{2}\alpha_s(1)}\left[-K_s(1)\right]|X_{s}^{\ep}|^p ds+\frac{p}{2}\int_{0} ^{t}e^{-\frac{p}{2}\alpha_s(1)}|X_{s}^{\ep}|^{p-2}K_s(1)(1+|X^{\ep}_s|^{2})ds\\
&&+\frac{p(p-1)}{2}\int_{0} ^{t}e^{-\frac{p}{2}\alpha_s(1)}|X_{s}^{\ep}|^{p-2}[K_s(1)+C|X^{\ep}_s|^2]ds\\
&&+p\int_{0} ^{t}e^{-\frac{p}{2}\alpha_s(1)}|X_{s}^{\ep}|^{p-2}\langle X_{s}^{\ep}, \sigma(s,X_{s}^{\ep})dW^{1}_s\rangle\\
\leq\!\!\!\!\!\!\!\!&&|x|^{p}+\frac{p^2}{2}\int^t_0 e^{-\frac{p}{2}\alpha_s(1)}|X_{s}^{\ep}|^{p-2}K_s(1)ds+C_p\int_{0} ^{t}e^{-\frac{p}{2}\alpha_s(1)}|X^{\ep}_s|^p ds\\
&&+p\int_{0} ^{t}e^{-\frac{p}{2}\alpha_s(1)}|X_{s}^{\ep}|^{p-2}\langle X_{s}^{\ep}, \sigma(s,X_{s}^{\ep})dW^{1}_s\rangle.
\end{eqnarray*}
Then by the Burkholder-Davis-Gundy inequality and Young's inequality, for any $T>0$, we have
\begin{eqnarray*}
&&\EE\left[\sup_{t\in [0,T]}\left(e^{-\frac{p}{2}\alpha_t(1)}|X_{t}^{\ep}|^{p}\right)\right]\\
\leq\!\!\!\!\!\!\!\!&&|x|^{p}+\frac{p^2}{2}\EE\int^T_0 e^{-\frac{p}{2}\alpha_t(1)}|X_{t}^{\ep}|^{p-2}K_t(1)dt+C_{p}\int^T_0 \EE \left(e^{-\frac{p}{2}\alpha_t(1)}|X_{t}^{\ep}|^{p}\right) dt\\
&&+C_p\EE\left[\int^T_0 e^{-p\alpha_t(1)}|X_{t}^{\ep}|^{2p-2}(K_t(1)+C|X^{\ep}_t|^2)dt\right]^{1/2}\\
\leq\!\!\!\!\!\!\!\!&&|x|^{p}+\frac{1}{4}\EE\left[\sup_{t\in [0,T]}\left(e^{-\frac{p}{2}\alpha_t(1)}|X_{t}^{\ep}|^{p}\right)\right]+C_p\EE\left[\int^T_0 e^{-\alpha_s(1)}K_s(1)ds\right]^{p/2}\\
&&+C_{p}\int^T_0 \EE \left(e^{-\frac{p}{2}\alpha_t(1)}|X_{t}^{\ep}|^{p}\right) dt+\frac{1}{4}\EE\left[\sup_{t\in [0, T]}\left(e^{-\frac{p}{2}\alpha_t(1)}|X_{t}^{\ep}|^{p}\right)\right]\\
&&+C_{p}\EE\left[\int^T_0 e^{-\alpha_t(1)}(K_t(1)+C|X^{\ep}_t|^2)dt\right]^{\frac{p}{2}},
\end{eqnarray*}
which implies that
\begin{eqnarray*}
\EE\left[\sup_{t\in [0,T]}\left(e^{-\frac{p}{2}\alpha_t(1)}|X_{t}^{\ep}|^{p}\right)\right]\leq\!\!\!\!\!\!\!\!&&C_{p}(|x|^{p}+1)+C_{T,p}\int^T_0 \EE \left(e^{-\frac{p}{2}\alpha_t(1)}|X_{t}^{\ep}|^{p}\right) dt.
\end{eqnarray*}
Then Gronwall's inequality yields that
\begin{eqnarray*}
\EE\left[\sup_{t\in [0,T]}\left(e^{-\frac{p}{2}\alpha_t(1)}|X_{t}^{\ep}|^{p}\right)\right]\leq C_{T,p}(|x|^{p}+1).
\end{eqnarray*}
Hence, by H\"{o}lder inequality and since $\EE e^{p\alpha_T(1)}<\infty$ for any $p>0$ , we obtain for any $p>0$
\begin{eqnarray*}
\EE\left(\sup_{t\in [0,T]}|X_{t}^{\ep}|^{p}\right)\leq\!\!\!\!\!\!\!\!&&\left\{\EE\left[\sup_{t\in [0,T]}\left(e^{-p\alpha_t(1)}|X_{t}^{\ep}|^{2p}\right)\right]\right\}^{\frac{1}{2}}\cdot \left[\EE e^{p\alpha_T(1)}\right]^{\frac{1}{2}}\\
\leq\!\!\!\!\!\!\!\!&&C_{T,p}(|x|^{p}+1).
\end{eqnarray*}

By It\^{o}'s formula we also have
\begin{eqnarray*}
\mathbb{E}|Y_{t}^{\ep}|^{k\theta_4}=\!\!\!\!\!\!\!\!&&\frac{k\theta_4}{\ep}\int^t_0\mathbb{E}\left[| Y_{s}^{\ep}|^{k\theta_4-2}\langle f(s,X_{s}^{\ep},Y_{s}^{\ep}),Y_{s}^{\ep}\rangle\right] ds+\frac{k\theta_4}{2\ep}\int^t_0 \mathbb{E}\left[|Y_{s}^{\ep}|^{k\theta_4-2}\|g(s,X_{s}^{\ep},Y_{s}^{\ep})\|^2\right] ds\nonumber \\
 \!\!\!\!\!\!\!\!&& +\frac{k\theta_4(k\theta_4-2)}{2\ep}\int^t_0\mathbb{E}\left[|Y_{s}^{\ep}|^{k\theta_4-4}\cdot|\langle Y^\ep_s, g(s,X_{s}^{\ep},Y_{s}^{\ep})\rangle|^2\right] ds.
\end{eqnarray*}
If \textbf{$({\bf A}_{k\theta_4})$} holds for $k\geq \frac{2}{\theta_4}$, then there exist $\tilde\beta_k, C_{T,k}>0$ such that
\begin{eqnarray*}
\frac{d}{dt}\mathbb{E}|Y_{t}^{\ep}|^{k\theta_4}\leq\!\!\!\!\!\!\!\!&&\frac{k\theta_4}{2\ep}\EE\left[| Y_{t}^{\ep}|^{k\theta_4-2}\left(2\langle f(t, X_{t}^{\ep},Y_{t}^{\ep}),Y_{t}^{\ep}\rangle+(k\theta_4-1)\|g(t, X_{t}^{\ep},Y_{t}^{\ep})\|^2\right)\right]\\
 \leq\!\!\!\!\!\!\!\!&&-\frac{\tilde \beta_k}{\ep}\mathbb{E}|Y_{t}^{\ep}|^{k\theta_4}
+\frac{C_{T,k}}{\ep}\left(\EE|X_{t}^{\ep}|^{2k}+1\right).
\end{eqnarray*}
Hence, by the comparison theorem we obtain
\begin{eqnarray*}
\mathbb{E}|Y_{t}^{\ep}|^{k\theta_4}\leq\!\!\!\!\!\!\!\!&&|y|^{k\theta_4}e^{-\frac{\tilde \beta_k}{\ep}t}+\frac{C_{T,k}}{\ep}\int^t_0 e^{-\frac{\tilde \beta_k}{\ep}(t-s)}\Big(1+\EE|X_{s}^{\ep}|^{2k}\Big)ds\nonumber\\
\leq\!\!\!\!\!\!\!\!&&C_{T,k}(1+|x|^{2k}+|y|^{k\theta_4}),\label{4.4.3}
\end{eqnarray*}
which gives the statement $(i)$.

\vspace{0.3cm}
$(ii)$. Notice that since \textbf{$({\bf H}_1)$} holds with $\lambda_1>0$, for any $k\geq 1$,  It\^{o}'s formula implies that
\begin{eqnarray*}
&&e^{-k\alpha_t(1)}|X_{t}^{\ep}|^{2k}\\
=\!\!\!\!\!\!\!\!&&|x|^{2k}+k\int^t_0 e^{-k\alpha_s(1)}\left[-K_s(1)\right]|X_{s}^{\ep}|^{2k} ds+2k\int_{0} ^{t}e^{-k\alpha_s(1)}|X_{s}^{\ep}|^{2k-2}\langle X_{s}^{\ep}, b(s,X_{s}^{\ep}, Y_{s}^{\ep})\rangle ds\\
&&+k\int_{0} ^{t}e^{-k\alpha_s(1)}|X_{s}^{\ep}|^{2k-2}\|\sigma(s,X_{s}^{\ep})\|^2ds+2k(k-1)\int_{0} ^{t}e^{-k\alpha_s(1)}|X_{s}^{\ep}|^{2k-4}\left|\langle X_{s}^{\ep}, \sigma(s,X_{s}^{\ep})\rangle\right|^2 ds\\
&&
+2k\int_{0} ^{t}e^{-k\alpha_s(1)}|X_{s}^{\ep}|^{2k-2}\langle X_{s}^{\ep}, \sigma(s,X_{s}^{\ep})dW^{1}_s\rangle\\
\leq\!\!\!\!\!\!\!\!&&|x|^{2k}+k\int^t_0 e^{-k\alpha_s(1)}\left[-K_s(1)\right]|X_{s}^{\ep}|^{2k} ds+k\int_{0} ^{t}e^{-k\alpha_s(1)}|X_{s}^{\ep}|^{2k-2}K_s(1)(1+|X^{\ep}_s|^{2})ds\\
&&+k\lambda_1\int_{0} ^{t}e^{-k\alpha_s(1)}|X_{s}^{\ep}|^{2k-2}|Y^{\ep}_s|^{\theta_4}ds+k(2k-1)\int_{0} ^{t}e^{-k\alpha_s(1)}|X_{s}^{\ep}|^{2k-2}[K_s(1)+C|X^{\ep}_s|^2]ds\\
&&+2k\int_{0} ^{t}e^{-k\alpha_s(1)}|X_{s}^{\ep}|^{2k-2}\langle X_{s}^{\ep}, \sigma(s,X_{s}^{\ep})dW^{1}_s\rangle\\
\leq\!\!\!\!\!\!\!\!&&|x|^{2k}+2 k^2 \int^t_0 e^{-k\alpha_s(1)}|X_{s}^{\ep}|^{2k-2}K_s(1)ds+C_k\int_{0} ^{t}e^{-k\alpha_s(1)}|Y^{\ep}_s|^{k\theta_4} ds\\
&&+C_k\int_{0} ^{t}e^{-k\alpha_s(1)}|X^{\ep}_s|^{2k} ds+2k\int_{0} ^{t}e^{-k\alpha_s(1)}|X_{s}^{\ep}|^{2k-2}\langle X_{s}^{\ep}, \sigma(s,X_{s}^{\ep})dW^{1}_s\rangle.
\end{eqnarray*}
Then by the Burkholder-Davis-Gundy inequality and Young's inequality,  we have
\begin{eqnarray*}
&&\EE\left[\sup_{t\in [0,T]}\left(e^{-k\alpha_t(1)}|X_{t}^{\ep}|^{2k}\right)\right]\\
\leq\!\!\!\!\!\!\!\!&&|x|^{2k}+2 k^2\int^T_0 \EE\left[e^{-\frac{k}{2}\alpha_t(1)}|X_{t}^{\ep}|^{2k-2}K_t(1)\right]dt+C_k\int_{0} ^{T}\EE\left(e^{-k\alpha_t(1)}|Y^{\ep}_t|^{k\theta_4}\right) dt\\
&&+C_{k}\int^T_0 \EE \left(e^{-k\alpha_t(1)}|X_{t}^{\ep}|^{2k}\right) dt+C_k\EE\left[\int^T_0 e^{-2k\alpha_t(1)}|X_{t}^{\ep}|^{4k-2}(K_t(1)+C|X^{\ep}_t|^2)dt\right]^{1/2}\\
\leq\!\!\!\!\!\!\!\!&&|x|^{2k}+\frac{1}{4}\EE\left[\sup_{t\in [0,T]}\left(e^{-k\alpha_t(1)}|X_{t}^{\ep}|^{2k}\right)\right]+C_k\EE\left[\int^T_0 e^{-\alpha_s(1)}K_s(1)ds\right]^{k}\\
&&+C_k\int_{0} ^{T}\EE\left(e^{-k\alpha_t(1)}|Y^{\ep}_t|^{k\theta_4}\right) dt+C_{k}\int^T_0 \EE \left(e^{-k\alpha_t(1)}|X_{t}^{\ep}|^{2k}\right) dt\\
&&+\frac{1}{4}\EE\left[\sup_{t\in [0, T]}\left(e^{-k\alpha_t(1)}|X_{t}^{\ep}|^{2k}\right)\right]+C_{k}\EE\left[\int^T_0 e^{-\alpha_t(1)}(K_t(1)+C|X^{\ep}_t|^2)dt\right]^{k},
\end{eqnarray*}
which implies that
\begin{eqnarray}
\EE\left[\sup_{t\in [0,T]}\left(e^{-k\alpha_t(1)}|X_{t}^{\ep}|^{2k}\right)\right]\leq\!\!\!\!\!\!\!\!&&C_{T,k}(|x|^{2k}+1)+C_{k}\int^T_0 \EE \left(e^{-k\alpha_t(1)}|Y_{t}^{\ep}|^{k\theta_4} \right)dt\nonumber\\
&&+C_{T,k}\int^T_0 \EE \left(e^{-k\alpha_t(1)}|X_{t}^{\ep}|^{2k}\right) dt.\label{I3.8}
\end{eqnarray}

Using It\^{o}'s formula again we have
\begin{eqnarray*}
&&\mathbb{E}\left(e^{-k\alpha_t(1)}|Y_{t}^{\ep}|^{k\theta_4}\right)\\
=\!\!\!\!\!\!\!\!&&\int^t_0\EE\left[e^{-k\alpha_s(1)}[-k K_s(1)]|Y^{\ep}_s|^{k\theta_4}\right]ds+\frac{k\theta_4}{\ep}\int^t_0\mathbb{E}\left[e^{-k\alpha_s(1)}| Y_{s}^{\ep}|^{k\theta_4-2}\langle f(s, X_{s}^{\ep},Y_{s}^{\ep}),Y_{s}^{\ep}\rangle \right]ds\\
&&+\frac{k\theta_4}{2\ep}\int^t_0\mathbb{E}\left[e^{-k\alpha_s(1)}|Y_{s}^{\ep}|^{k\theta_4-2}\|g(s, X_{s}^{\ep},Y_{s}^{\ep})\|^2\right]ds\\
&&+\frac{k\theta_4(k\theta_4-2)}{2\ep}\int^t_0\mathbb{E}\left[e^{-k\alpha_s(1)}|Y_{s}^{\ep}|^{k\theta_4-4}\cdot|\langle Y^\ep_t, g(s, X_{s}^{\ep},Y_{s}^{\ep})\rangle|^2\right]ds.
\end{eqnarray*}
For any $t\in [0, T]$,  \textbf{$({\bf A}_{k\theta_4})$} implies that there exists $\tilde{\beta}_k, C_{T,k}>0$ such that
\begin{eqnarray*}
\frac{d}{dt}\mathbb{E}\left(e^{-k\alpha_t(1)}|Y_{t}^{\ep}|^{k\theta_4}\right)\leq\!\!\!\!\!\!\!\!&&\frac{k\theta_4}{2\ep}\EE\left[e^{-k\alpha_s(1)}| Y_{t}^{\ep}|^{k\theta_4-2}\left(2\langle f(t,X_{t}^{\ep},Y_{t}^{\ep}),Y_{t}^{\ep}\rangle+(k\theta_4-1)\|g(t,X_{t}^{\ep},Y_{t}^{\ep})\|^2\right)\right]\\
 \leq\!\!\!\!\!\!\!\!&&-\frac{\tilde{\beta}_k}{\ep}\mathbb{E}\left(e^{-k\alpha_t(1)}|Y_{t}^{\ep}|^{k\theta_4}\right)
+\frac{C_{T,k}}{\ep}\left[\EE\left(e^{-k\alpha_t(1)}|X_{t}^{\ep}|^{2k}\right)+1\right].
\end{eqnarray*}
By the comparison theorem  there exists $\tilde{\beta}_k>0$
\begin{eqnarray}
\mathbb{E}\left(e^{-k\alpha_t(1)}|Y_{t}^{\ep}|^{k\theta_4}\right)\leq\!\!\!\!\!\!\!\!&&|y|^{k\theta_4}e^{-\frac{\tilde{\beta}_k t}{\ep}}+\frac{C_{T,k}}{\ep}\int^t_0 e^{-\frac{\tilde {\beta}_k (t-s)}{\ep}}\left(\EE\left[e^{-k\alpha_s(1)}|X_{s}^{\ep}|^{2k}\right]+1\right)ds\nonumber\\
\leq\!\!\!\!\!\!\!\!&&|y|^{k\theta_4}+C_{T,k}\EE\left[\sup_{s\in[0,t]}\left(e^{-k\alpha_s(1)}|X_{s}^{\ep}|^{2k}\right)\right]+C_{T,k}.\label{I3.9}
\end{eqnarray}
Combining this with \eref{I3.8} we obtain
\begin{eqnarray*}
\EE\left[\sup_{t\in [0,T]}\left(e^{-k\alpha_t(1)}|X_{t}^{\ep}|^{2k}\right)\right]\leq C_{T,k}(|x|^{2k}+|y|^{k\theta_4}+1)+C_{T,k}\int^T_0 \EE\left[\sup_{s\in[0, t]}\left(e^{-k\alpha_s(1)}|X_{s}^{\ep}|^{2k}\right)\right] dt.
\end{eqnarray*}
Hence Gronwall's inequality implies that
\begin{eqnarray}
\EE\left[\sup_{t\in [0,T]}\left(e^{-k\alpha_t(1)}|X_{t}^{\ep}|^{2k}\right)\right]\leq C_{T,k}(|x|^{2k}+|y|^{k\theta_4}+1).\label{I3.10}
\end{eqnarray}
By \eref{I3.9} and \eref{I3.10}, we also have
\begin{eqnarray*}
\sup_{t\in[0, T]}\mathbb{E}\left(e^{-k\alpha_t(1)}|Y_{t}^{\ep}|^{k\theta_4}\right)\leq C_{T,k}(|x|^{2k}+|y|^{k\theta_4}+1).
\end{eqnarray*}
Hence, by H\"{o}lder's inequality and since $\EE e^{p\alpha_T(1)}<\infty$ for any $p>0$ , we obtain for any $k'<k$,
\begin{eqnarray*}
\EE\left(\sup_{t\in [0,T]}|X_{t}^{\ep}|^{2k'}\right)\leq\!\!\!\!\!\!\!\!&&\left\{\EE\left[\sup_{t\in [0,T]}\left(e^{-k\alpha_t(1)}|X_{t}^{\ep}|^{2k}\right)\right]\right\}^{\frac{k'}{k}}\cdot \left[\EE e^{\frac{kk'}{k-k'}\alpha_T(1)}\right]^{\frac{k-k'}{k}}\\
\leq\!\!\!\!\!\!\!\!&&C_{T,k}(|x|^{2k'}+|y|^{k'\theta_4}+1).
\end{eqnarray*}
Similarly, we have
\begin{eqnarray*}
\sup_{t\in[0, T]}\mathbb{E}|Y_{t}^{\ep}|^{k'\theta_4}\leq C_{T,k}(|x|^{2k'}+|y|^{k'\theta_4 }+1).
\end{eqnarray*}
 Hence the proof is complete.
\end{proof}

\begin{lemma} \label{COX} Assume that either \textbf{$({\bf H}_1)$} with $\lambda_1=0$ and \textbf{$({\bf A}_{2\theta_6})$} hold or \textbf{$({\bf H}_1)$} with $\lambda_1>0$ and \textbf{$({\bf A}_{k})$} with some $k>\max\{2\theta_6, \theta_5\theta_4\}$ hold. Then for any $T>0$, $0\leq t\leq t+h\leq T$, there exists  $C_{T,x,y}>0$ such that
\begin{align*}
\sup_{\ep\in(0,1)}\mathbb{E}|X_{t+h}^{\ep}-X_{t}^{\ep}|^{2}\leq C_{T,x,y}h.
\end{align*}
\end{lemma}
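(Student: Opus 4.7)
The plan is to use the integral formulation of $X^\ep$ from \eref{mild solution},
$$
X^{\ep}_{t+h}-X^{\ep}_t=\int_t^{t+h}b(s,X^{\ep}_s,Y^{\ep}_s)\,ds+\int_t^{t+h}\sigma(s,X^{\ep}_s)\,dW^1_s,
$$
and bound each term in $L^2$ separately. For the drift term I apply Cauchy--Schwarz to obtain
$\left|\int_t^{t+h}b(s,X^{\ep}_s,Y^{\ep}_s)\,ds\right|^2\le h\int_t^{t+h}|b(s,X^{\ep}_s,Y^{\ep}_s)|^2\,ds$,
and for the stochastic integral I use It\^o's isometry to get
$\mathbb{E}\left|\int_t^{t+h}\sigma(s,X^{\ep}_s)\,dW^1_s\right|^2=\mathbb{E}\int_t^{t+h}\|\sigma(s,X^{\ep}_s)\|^2\,ds$.
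Thus it suffices to show that $\sup_{\ep\in(0,1)}\sup_{s\in[0,T]}\bigl(\mathbb{E}|b(s,X^{\ep}_s,Y^{\ep}_s)|^2+\mathbb{E}\|\sigma(s,X^{\ep}_s)\|^2\bigr)$ is finite, since then both terms are bounded by a constant times $h$.

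To establish this uniform-in-$s$-and-$\ep$ bound I invoke the growth estimates in \textbf{$({\bf H}_1)$}(iii):
$|b(s,x,y)|^2\le C\bigl(K_s(1)^2+|x|^{2\theta_5}+|y|^{2\theta_6}\bigr)$ and $\|\sigma(s,x)\|^2\le K_s(1)+C|x|^2$.
The assumption $\sup_{t\in[0,T]}\mathbb{E}|K_t(1)|^4<\infty$ handles the $K_s(1)$-terms. It remains to bound $\mathbb{E}|X^{\ep}_s|^{2\theta_5}$ and $\mathbb{E}|Y^{\ep}_s|^{2\theta_6}$ uniformly in $\ep$ and $s\in[0,T]$.

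In the case $\lambda_1=0$, I apply Lemma \ref{PMY}(i): for any $p>0$ one has $\mathbb{E}\sup_{s\in[0,T]}|X^{\ep}_s|^p\le C_{T,p}(1+|x|^p)$, which covers $p=2\theta_5$ (and in fact $p=2$ for $\sigma$); choosing $k=2\theta_6/\theta_4$ in Lemma \ref{PMY}(i), the hypothesis \textbf{$({\bf A}_{2\theta_6})$} yields $\sup_{s\in[0,T]}\mathbb{E}|Y^{\ep}_s|^{2\theta_6}\le C_{T,x,y}$. In the case $\lambda_1>0$ I apply Lemma \ref{PMY}(ii) with the parameter in \textbf{$({\bf A}_k)$} taken to be $k>\max\{2\theta_6,\theta_5\theta_4\}$: choosing $k'=\theta_5<k/\theta_4$ yields a uniform bound on $\mathbb{E}\sup_{s\in[0,T]}|X^{\ep}_s|^{2\theta_5}$, while choosing $k'=2\theta_6/\theta_4<k/\theta_4$ yields a uniform bound on $\sup_{s\in[0,T]}\mathbb{E}|Y^{\ep}_s|^{2\theta_6}$. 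In either case the bounds depend on $T,x,y$ but not on $\ep$ or $s$.

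The main obstacle is not any technical difficulty but simply verifying that the moment exponents required by the growth conditions on $b$ and $\sigma$ (namely $2\theta_5$ for $|X^\ep|$ and $2\theta_6$ for $|Y^\ep|$) fall within the admissible ranges of Lemma \ref{PMY}, which is precisely why the hypotheses of Lemma \ref{COX} are phrased in terms of $\max\{2\theta_6,\theta_5\theta_4\}$. Once those exponents are matched, combining the two $L^2$ bounds gives
$$
\mathbb{E}|X^{\ep}_{t+h}-X^{\ep}_t|^2\le h\int_t^{t+h}\mathbb{E}|b(s,X^{\ep}_s,Y^{\ep}_s)|^2\,ds+\int_t^{t+h}\mathbb{E}\|\sigma(s,X^{\ep}_s)\|^2\,ds\le C_{T,x,y}(h^2+h)\le C_{T,x,y}\,h,
$$
since $h\le T$, completing the proof.
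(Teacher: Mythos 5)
Your proposal is correct and follows essentially the same route as the paper: decompose the increment via the integral equation, apply Cauchy--Schwarz to the drift and It\^{o}'s isometry to the stochastic integral, invoke the growth bounds in \textbf{$({\bf H}_1)$}(iii), and close with the moment estimates of Lemma \ref{PMY}. Your explicit verification that the exponents $2\theta_5$ and $2\theta_6$ fit the admissible ranges of Lemma \ref{PMY} in each case is a detail the paper leaves implicit, but the argument is the same.
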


\begin{proof}
We have
\begin{eqnarray}
X_{t+h}^{\ep}-X_{t}^{\ep}=\int^{t+h}_t b(s, X^{\ep}_s, Y^{\ep}_s)ds+\int^{t+h}_t\sigma(s, X^{\ep}_s)dW^1_s.\nonumber
\end{eqnarray}
Then by condition \textbf{$({\bf H}_1)$} and Lemma \ref{PMY}, we get
\begin{eqnarray*}
\EE|X_{t+h}^{\ep}-X_{t}^{\ep}|^2\leq &&\!\!\!\!\!\!\!\!C\EE\left|\int^{t+h}_t b(s, X^{\ep}_s, Y^{\ep}_s)ds\right|^2+C\EE\left|\int^{t+h}_t\sigma(s, X^{\ep}_s)dW^1_s\right|^2\\
\leq&& \!\!\!\!\!\!\!\!C \EE \left|\int^{t+h}_t|b(s, X^{\ep}_s, Y^{\ep}_s)| ds\right|^2+C\int^{t+h}_t \EE\|\sigma(s, X^{\ep}_s)\|^2ds\\
\leq &&\!\!\!\!\!\!\!\! C h\EE\int^{t+h}_t \left[(K_s(1))^2+|X^{\ep}_s|^{2\theta_5}+| Y^{\ep}_s|^{2\theta_6} \right]ds+C\int^{t+h}_t \EE(K_s(1)+C|X^{\ep}_s|^2)ds\\
\leq &&\!\!\!\!\!\!\!\! C_{T,x,y}h.
\end{eqnarray*}
The proof is complete.
\end{proof}

\vskip 0.2cm

\subsection{ Estimates of the auxiliary process $(\hat{X}_{t}^{\ep},\hat{Y}_{t}^{\ep})$}

Following the idea inspired by Khasminskii in \cite{K1},
we introduce an auxiliary process $(\hat{X}_{t}^{\ep},\hat{Y}_{t}^{\ep})\in \RR^n\times \RR^m$
and divide $[0,T]$ into intervals depending of size $\delta$, where $\delta$ is a fixed positive number depending on $\ep$, which will be chosen later.
We construct a process $\hat{Y}_{t}^{\ep}$ with initial value $\hat{Y}_{0}^{\ep}=Y^{\ep}_{0}=y$,
and for $t\in[k\delta,\min((k+1)\delta,T)]$,
\begin{eqnarray*}
\hat{Y}_{t}^{\ep}=\hat Y_{k\delta}^{\ep}+\frac{1}{\ep}\int_{k\delta}^{t}
f(k\delta, X_{k\delta}^{\ep},\hat{Y}_{s}^{\ep})ds+\frac{1}{\sqrt{\ep}}\int_{k\delta}^{t}g(k\delta, X_{k\delta}^{\ep},\hat{Y}_{s}^{\ep})dW^{2}_s,
\end{eqnarray*}
i.e.,
\begin{eqnarray*}
\hat{Y}_{t}^{\ep}=y+\frac{1}{\ep}\int_{0}^{t} f(s(\delta), X_{s(\delta)}^{\ep},\hat{Y}_{s}^{\ep})ds+\frac{1}{\sqrt{\ep}}\int_{0}^{t}g(s(\delta), X_{s(\delta)}^{\ep},\hat{Y}_{s}^{\ep})d W^{2}_s,
\end{eqnarray*}
where $s(\delta)=[{s}/{\delta}]\delta$ and $[{s}/{\delta}]$ is the integer part of ${s}/{\delta}$. Also, we define the process $\hat{X}_{t}^{\ep}$ by
$$
\hat{X}_{t}^{\ep}=x+\int_{0}^{t}b(s(\delta), X_{s(\delta)}^{\ep},\hat{Y}_{s}^{\ep})ds+\int_{0}^{t}\sigma(s, X^{\ep}_s)dW^{1}_s.
$$

By the construction of $\hat{Y}_{t}^{\ep}$ and similar arguments as in Lemma \ref{PMY}, it is easy to obtain the following estimate whose proof we omit here.
\begin{lemma} \label{MDY}
(i) If $\lambda_1=0$ in \textbf{$({\bf H}_1)$} and \textbf{$({\bf A}_{k\theta_4})$} holds with some $k\geq \frac{2}{\theta_4}$, then for any $T>0$, there exists a constant $C_{T,k}>0$ such that
\begin{eqnarray*}
\sup_{\ep\in(0,\ep_0)}\sup_{t\in [0, T]}\mathbb{E}|\hat Y_{t}^{\ep}|^{k\theta_4}\leq C_{T,k}(|x|^{2k}+|y|^{k\theta_4}+1).\label{Y}
\end{eqnarray*}

(ii) If $\lambda_1>0$ in \textbf{$({\bf H}_1)$} and \textbf{$({\bf A}_{k\theta_4})$} holds with some $k\geq 1$, then for any $T>0$, $k'<k$, there exists a constant $C_{T,k}>0$ such that
\begin{eqnarray*}
\sup_{\ep\in(0,\ep_0)}\sup_{t\in [0, T]}\mathbb{E}|\hat Y_{t}^{\ep}|^{k'\theta_4}\leq C_{T,k}(|x|^{2k'}+|y|^{k'\theta_4}+1).\label{Y2}
\end{eqnarray*}
\end{lemma}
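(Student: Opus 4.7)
The plan is to mimic the proof of Lemma \ref{PMY} almost verbatim, since $\hat Y^\ep$ satisfies an analogous SDE with time- and space-frozen drift and diffusion: in the unified form
\begin{equation*}
\hat Y^\ep_t = y + \frac{1}{\ep}\int_0^t f(s(\de), X^\ep_{s(\de)}, \hat Y^\ep_s)\,ds + \frac{1}{\sqrt{\ep}}\int_0^t g(s(\de), X^\ep_{s(\de)}, \hat Y^\ep_s)\,dW^2_s,
\end{equation*}
the integrands are progressively measurable in $s$ and $\hat Y^\ep$ is a continuous semimartingale over the whole of $[0, T]$. Consequently It\^o's formula applies globally, with no special treatment needed at the discretization nodes $k\de$.

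For part (i) I would apply It\^o's formula to $|\hat Y^\ep_t|^{k\theta_4}$ and take expectation. Because \textbf{$({\bf A}_{k\theta_4})$} holds uniformly in its $(t, x)$-arguments, evaluating it at $(s(\de), X^\ep_{s(\de)}, \hat Y^\ep_s)$ would yield, exactly as in Lemma \ref{PMY}(i), the differential inequality
\begin{equation*}
\frac{d}{dt}\EE|\hat Y^\ep_t|^{k\theta_4} \leq -\frac{\tilde\beta_k}{\ep}\,\EE|\hat Y^\ep_t|^{k\theta_4} + \frac{C_{T,k}}{\ep}\bigl(\EE|X^\ep_{s(\de)}|^{2k} + 1\bigr).
\end{equation*}
The comparison theorem would then give the desired bound, and substituting $\sup_{s\leq T}\EE|X^\ep_{s(\de)}|^{2k} \leq \sup_{s\leq T}\EE|X^\ep_s|^{2k}$, which is finite uniformly in $\ep \in (0, \ep_0)$ by Lemma \ref{PMY}(i), would conclude (i).

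For part (ii), the regime $\lambda_1 > 0$ forces the use of the exponential weight $e^{-k\alpha_t(1)}$ already introduced in Lemma \ref{PMY}(ii). I would apply It\^o's formula to $e^{-k\alpha_t(1)}|\hat Y^\ep_t|^{k\theta_4}$; the extra contribution $-k K_t(1) e^{-k\alpha_t(1)}|\hat Y^\ep_t|^{k\theta_4}$ from the weight is non-positive and can be dropped, so \textbf{$({\bf A}_{k\theta_4})$} provides the same dissipativity as before and yields the analogous weighted differential inequality with forcing term $\tfrac{C_{T,k}}{\ep}(\EE[e^{-k\alpha_t(1)}|X^\ep_{s(\de)}|^{2k}] + 1)$. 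The comparison theorem together with Lemma \ref{PMY}(ii) would control the right-hand side, and the weight would be removed by H\"older's inequality using $\EE e^{p\alpha_T(1)} < \infty$; this last step is exactly what forces the passage from $k$ to any $k' < k$. No step presents a genuine obstacle, which is why the authors omit the proof; the only bookkeeping concern is that the $x$-slot inside $f$ and $g$ is now the piecewise constant $X^\ep_{s(\de)}$ rather than $X^\ep_t$, but this is absorbed without loss by the uniform sup bounds in Lemma \ref{PMY}.
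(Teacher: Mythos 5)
Your proposal is correct and follows exactly the route the paper intends: the paper omits the proof of Lemma \ref{MDY} precisely because it is obtained by repeating the argument of Lemma \ref{PMY} with the frozen arguments $(s(\delta), X^{\ep}_{s(\delta)})$ in place of $(s, X^{\ep}_s)$, which is what you do, including the correct handling of the weight $e^{-k\alpha_t(1)}$ and the H\"older step that trades $k$ for $k'<k$ in case (ii). The only point worth making explicit is that in the weighted estimate one bounds $e^{-k\alpha_s(1)}|X^{\ep}_{s(\delta)}|^{2k}\leq \sup_{r\in[0,T]}\left(e^{-k\alpha_r(1)}|X^{\ep}_r|^{2k}\right)$ using the monotonicity of $\alpha_\cdot(1)$, which is covered by the bound \eref{I3.10}.
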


\vskip 0.2cm
Now, we intend to estimate the difference process $Y_{t}^{\ep}-\hat{Y}_{t}^{\ep}$ and furthermore the difference process $X^{\ep}_t-\hat{X}_{t}^{\ep}$.
\begin{lemma} \label{DEY} Assume that either \textbf{$({\bf H}_1)$} with $\lambda_1=0$ and \textbf{$({\bf A}_{\tilde \theta_1})$} hold or \textbf{$({\bf H}_1)$} with $\lambda_1>0$ and \textbf{$({\bf A}_{k})$} with some $k>\tilde \theta_2$ hold, where $\tilde \theta_1=\max\{2\theta_6, 4\alpha_2\}$ and $\tilde \theta_2=\max\{2\theta_6, \theta_5\theta_4, 4\alpha_2, 2\alpha_1\theta_4\}$. Then for any $T>0$,
there exists a constant $C_{T,x,y}>0$ such that
$$
\sup_{\ep\in(0,1)}\sup_{t\in[0,T]}\mathbb{E}|Y_{t}^{\ep}-\hat{Y}_{t}^{\ep}|^{2}\leq C_{T,x,y}\delta^{\frac{1}{2}\wedge \gamma_2}.
$$
\end{lemma}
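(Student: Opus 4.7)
The plan is to apply It\^o's formula to $|Y_t^\ep - \hat Y_t^\ep|^2$ and exploit the strict monotonicity \eref{sm} in $(\mathbf{H}_2)(i)$ to extract exponential decay at rate of order $\beta/\ep$, treating the shift in $(t,X)$ as a vanishing error as $\delta \to 0$. Setting $R_t^\ep := Y_t^\ep - \hat Y_t^\ep$, it satisfies
$$dR_t^\ep = \tfrac{1}{\ep}\bigl[f(t,X_t^\ep,Y_t^\ep) - f(t(\delta),X_{t(\delta)}^\ep,\hat Y_t^\ep)\bigr]dt + \tfrac{1}{\sqrt\ep}\bigl[g(t,X_t^\ep,Y_t^\ep) - g(t(\delta),X_{t(\delta)}^\ep,\hat Y_t^\ep)\bigr]dW_t^2.$$
The crucial step is to split each coefficient difference through the intermediate point $(t(\delta),X_{t(\delta)}^\ep,Y_t^\ep)$, so that the ``inner'' pair compares values at the \emph{same} slow argument, which is precisely the setting in which the monotonicity applies.

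Applying It\^o to $|R_t^\ep|^2$ and using $(\mathbf{H}_2)(i)$ on the inner combination $2\langle R_t^\ep, f(t(\delta),X_{t(\delta)}^\ep,Y_t^\ep) - f(t(\delta),X_{t(\delta)}^\ep,\hat Y_t^\ep)\rangle + \|g(t(\delta),X_{t(\delta)}^\ep,Y_t^\ep) - g(t(\delta),X_{t(\delta)}^\ep,\hat Y_t^\ep)\|^2 \leq -\beta|R_t^\ep|^2$, together with $\|a+b\|^2 \leq (1+\eta)\|a\|^2 + (1+\eta^{-1})\|b\|^2$ for $\eta$ small enough that the extra $\eta\|g_{\text{inner}}\|^2 \leq \eta C|R_t^\ep|^2$ is absorbed into the monotonicity term, yields, after taking expectations and localizing the $dW^2$ martingale,
$$\frac{d}{dt}\EE|R_t^\ep|^2 \leq -\frac{\beta}{2\ep}\EE|R_t^\ep|^2 + \frac{2}{\ep}\EE\bigl[|R_t^\ep|\,|\Delta f_t|\bigr] + \frac{C_\eta}{\ep}\EE\|\Delta g_t\|^2,$$
where $\Delta f_t := f(t,X_t^\ep,Y_t^\ep) - f(t(\delta),X_{t(\delta)}^\ep,Y_t^\ep)$ and $\Delta g_t$ is the analogous $g$-increment. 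By $(\mathbf{H}_2)(ii)$ these are controlled by
$$|\Delta f_t| \leq C_T(\delta^{\gamma_2} + |X_t^\ep - X_{t(\delta)}^\ep|)G_t,\qquad \|\Delta g_t\| \leq C_T(\delta^{\gamma_2} + |X_t^\ep - X_{t(\delta)}^\ep|),$$
with $G_t := 1 + |X_t^\ep|^{\alpha_1} + |X_{t(\delta)}^\ep|^{\alpha_1} + |Y_t^\ep|^{\alpha_2}$.

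The main obstacle is that Lemma \ref{COX} only gives $\EE|X_t^\ep - X_{t(\delta)}^\ep|^2 \leq C_{T,x,y}\delta$, so naively squaring $|\Delta f_t|$ via Young would force me to bound $\EE[|X_t^\ep - X_{t(\delta)}^\ep|^2 G_t^2]$, for which a fourth-moment version of Lemma \ref{COX} (not available under our hypotheses) would be required. The remedy is to keep $|X_t^\ep - X_{t(\delta)}^\ep|$ to the first power via Cauchy--Schwarz,
$$\EE\bigl[|R_t^\ep|\,|X_t^\ep - X_{t(\delta)}^\ep|\,G_t\bigr] \leq \bigl(\EE|R_t^\ep|^2 G_t^2\bigr)^{1/2}\bigl(\EE|X_t^\ep - X_{t(\delta)}^\ep|^2\bigr)^{1/2},$$
and to dominate the first factor by a further Cauchy--Schwarz $\EE|R_t^\ep|^2 G_t^2 \leq (\EE|R_t^\ep|^4)^{1/2}(\EE G_t^4)^{1/2}$. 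This is exactly the point where the hypotheses $(\mathbf{A}_{\tilde\theta_1})$ or $(\mathbf{A}_k)$ with $k>\tilde\theta_2$ enter: via Lemmas \ref{PMY} and \ref{MDY} they supply uniform $L^{4\alpha_2}$-bounds on $Y^\ep,\hat Y^\ep$ and the needed moments of $X^\ep$ of orders $4\alpha_1$ and $4$, so that $(\EE|R_t^\ep|^2 G_t^2)^{1/2} \leq C_{T,x,y}$ uniformly in $\ep,t$. Hence this contribution is at most $C_{T,x,y}\delta^{1/2}$, an analogous Young argument gives $\EE[|R_t^\ep|\delta^{\gamma_2}G_t] \lesssim \mu\EE|R_t^\ep|^2 + \mu^{-1}\delta^{2\gamma_2}$ with small $\mu$, and $\EE\|\Delta g_t\|^2 \leq C(\delta^{2\gamma_2} + \delta)$. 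Collecting everything, the linear differential inequality
$$\frac{d}{dt}\EE|R_t^\ep|^2 \leq -\frac{\beta}{4\ep}\EE|R_t^\ep|^2 + \frac{C_{T,x,y}}{\ep}\delta^{\gamma_2 \wedge 1/2}$$
together with the comparison theorem (as used already in the proof of Lemma \ref{PMY}) yields $\sup_{t\in[0,T]}\EE|R_t^\ep|^2 \leq C_{T,x,y}\delta^{\gamma_2 \wedge 1/2}$, which is the asserted bound.
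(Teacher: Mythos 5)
Your proposal is correct and follows essentially the same route as the paper's proof: It\^o's formula for $\mathbb{E}|Y_t^{\ep}-\hat Y_t^{\ep}|^2$, splitting the coefficient differences through an intermediate point so that the strict monotonicity \eref{sm} yields decay at rate $\beta/\ep$, controlling the remainder via $({\bf H}_2)$(ii) with a trilinear H\"older bound ($L^2\times L^4\times L^4$) precisely to avoid needing a fourth-moment version of Lemma \ref{COX}, and concluding by the comparison theorem together with Lemmas \ref{PMY}, \ref{MDY} and \ref{COX}. The only (immaterial) differences are that the paper routes the split through $(s,X_s^{\ep},\hat Y_s^{\ep})$ rather than $(t(\delta),X_{t(\delta)}^{\ep},Y_t^{\ep})$, so its polynomial factor carries $|\hat Y_s^{\ep}|^{\alpha_2}$ instead of $|Y_t^{\ep}|^{\alpha_2}$, and it keeps the cross term in the quadratic variation explicitly instead of using your $(1+\eta)$ absorption.
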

\begin{proof}
Note that
\begin{eqnarray*}
Y_{t}^{\ep}-\hat{Y}_{t}^{\ep}=\!\!\!\!\!\!\!\!&&\frac{1}{\ep}\int_{0}^{t} \left[f(s,X_{s}^{\ep}, Y_{s}^{\ep})-f(s(\delta), X_{s(\delta)}^{\ep},\hat{Y}_{s}^{\ep})\right]ds\\
&&+\frac{1}{\sqrt{\ep}}\int_{0}^{t}\left[g(s, X_{s}^{\ep}, Y_{s}^{\ep})-g(s(\delta), X_{s(\delta)}^{\ep},\hat{Y}_{s}^{\ep})\right]dW^{2}_s.
\end{eqnarray*}
For any $t\in[0,T]$,  by It\^{o}'s formula we have
\begin{eqnarray*}
\mathbb{E}|Y_{t}^{\ep}-\hat{Y}_{t}^{\ep}|^{2}
=\!\!\!\!\!\!\!\!&&\frac{2}{\ep}\int^t_0\mathbb{E}\Big[\langle f(s,X_{s}^{\ep},Y_{s}^{\ep})-f(s(\delta),X_{s(\delta)}^{\ep},\hat{Y}_{s}^{\ep}), Y_{s}^{\ep}-\hat{Y}_{s}^{\ep}\rangle\Big]ds \nonumber\\
&&+\frac{1}{\ep}\int^t_0\EE\|g(s,X_{s}^{\ep},Y_{s}^{\ep})-g(s(\delta),X_{s(\delta)}^{\ep},\hat{Y}_{s}^{\ep})\|^2 ds\nonumber\\
=\!\!\!\!\!\!\!\!&&\frac{1}{\ep}\int^t_0 \mathbb{E}\Big[2\langle f(s,X_{s}^{\ep},Y_{s}^{\ep})-f(s,X_{s}^{\ep},\hat{Y}_{s}^{\ep}), Y_{s}^{\ep}-\hat{Y}_{s}^{\ep}\rangle+\|g(s,X_{s}^{\ep},Y_{s}^{\ep})-g(s,X_{s}^{\ep},\hat{Y}_{s}^{\ep})\|^2\Big]ds\\
&&+\frac{2}{\ep}\int^t_0\EE\langle f(s,X_{s}^{\ep},\hat{Y}_{s}^{\ep})-f(s(\delta), X_{s(\delta)}^{\ep},\hat{Y}_{s}^{\ep}), Y_{s}^{\ep}-\hat{Y}_{s}^{\ep}\rangle ds\nonumber\\
&&+\frac{2}{\ep}\int^t_0\EE\langle g(s,X_{s}^{\ep},Y_{s}^{\ep})-g(s,X_{s}^{\ep},\hat{Y}_{s}^{\ep}), g(s,X_{s}^{\ep},\hat{Y}_{s}^{\ep})-g(s(\delta),X_{s(\delta)}^{\ep},\hat{Y}_{s}^{\ep})\rangle ds\\
&&+\frac{1}{\ep}\int^t_0\EE\|g(s,X_{s}^{\ep},\hat{Y}_{s}^{\ep})-g(s(\delta), X_{s(\delta)}^{\ep},\hat{Y}_{s}^{\ep})\|^2 ds.
\end{eqnarray*}
By condition \textbf{$({\bf H}_2)$} , we obtain
\begin{eqnarray}
\frac{d}{dt}\mathbb{E}|Y_{t}^{\ep}-\hat{Y}_{t}^{\ep}|^{2}
\leq\!\!\!\!\!\!\!\!&&\frac{-\beta}{\ep}\mathbb{E}|Y_{t}^{\ep}-\hat{Y}_{t}^{\ep}|^2\nonumber\\
&&+\frac{C_T}{\ep}\EE\left[(|X_{t}^{\ep}-X_{t(\delta)}^{\ep}|+\delta^{\gamma_2})\left(|\hat Y_{t}^{\ep}|^{\alpha_2}+|X_{t}^{\ep}|^{\alpha_1}+|X_{t(\delta)}^{\ep}|^{\alpha_1}+1\right)\cdot|Y_{t}^{\ep}-\hat{Y}_{t}^{\ep}|\right] \nonumber\\
&&+\frac{C_T}{\ep}\EE|Y_{t}^{\ep}-\hat{Y}_{t}^{\ep}|(|X_{t}^{\ep}-X_{t(\delta)}^{\ep}|+\delta^{\gamma_2})+\frac{C_T}{\ep}\EE|X_{t}^{\ep}-X_{t(\delta)}^{\ep}|^2+\frac{C_T\delta^{2\gamma_2}}{\ep}\nonumber\\
\leq\!\!\!\!\!\!\!\!&&
-\frac{\beta}{2\ep}\mathbb{E}|Y_{t}^{\ep}-\hat{Y}_{t}^{\ep}|^{2}+\frac{C_T}{\ep}\mathbb{E}|X_{t}^{\ep}-X_{t(\delta)}^{\ep}|^2+\frac{C_T\delta^{2\gamma_2}}{\ep}\nonumber\\
&&+\frac{C_T}{\ep}\mathbb{E}\left[(|X_{t}^{\ep}-X_{t(\delta)}^{\ep}|+\delta^{\gamma_2})\left(|\hat Y_{t}^{\ep}|^{\alpha_2}+|X_{t}^{\ep}|^{\alpha_1}+|X_{t(\delta)}^{\ep}|^{\alpha_1}+1\right)\cdot|Y_{t}^{\ep}-\hat{Y}_{t}^{\ep}|\right].\nonumber
\end{eqnarray}
The comparison theorem implies that
\begin{eqnarray*}
&&\mathbb{E}|Y_{t}^{\ep}-\hat{Y}_{t}^{\ep}|^{2}\\
\leq\!\!\!\!\!\!\!\!&& \frac{C_T}{\ep}\int^t_0 e^{-\frac{\beta(t-s)}{2\ep}}\mathbb{E}\left[(|X_{s}^{\ep}-X_{s(\delta)}^{\ep}|+\delta^{\gamma_2})\left(|\hat Y_{s}^{\ep}|^{\alpha_2}+|X_{s}^{\ep}|^{\alpha_1}+|X_{s(\delta)}^{\ep}|^{\alpha_1}+1\right)\cdot|Y_{s}^{\ep}-\hat{Y}_{s}^{\ep}|\right]ds\\
&&+\frac{C}{\ep}\int^t_0 e^{-\frac{\beta(t-s)}{2\ep}}\mathbb{E}|X_{s}^{\ep}-X_{s(\delta)}^{\ep}|^2ds+\frac{C_T}{\ep}\int^t_0 e^{-\frac{\beta(t-s)}{2\ep}}\delta^{2\gamma_2}ds\\
\leq\!\!\!\!\!\!\!\!&&\frac{C_T}{\ep}\int^t_0 e^{-\frac{\beta(t-s)}{2\ep}}\left(\mathbb{E}|X_{s}^{\ep}-X_{s(\delta)}^{\ep}|^{2}+\delta^{2\gamma_2}\right)^{1/2}\\
&&\quad\quad\quad\left[\EE\left(|\hat Y_{s}^{\ep}|^{4\alpha_2}+|X_{s}^{\ep}|^{4\alpha_1}+|X_{s(\delta)}^{\ep}|^{4\alpha_1}+1\right)\EE\left(|Y_{s}^{\ep}-\hat{Y}_{s}^{\ep}|^4\right)\right]^{1/4}ds\\
&&+\frac{C}{\ep}\int^t_0 e^{-\frac{\beta(t-s)}{2\ep}}\mathbb{E}|X_{s}^{\ep}-X_{s(\delta)}^{\ep}|^2ds+C_T\delta^{2\gamma_2}.\\
\end{eqnarray*}
Hence, by Lemma \ref{PMY} and \ref{COX}, we have
\begin{eqnarray*}
\mathbb{E}|Y_{t}^{\ep}-\hat{Y}_{t}^{\ep}|^{2}\leq\!\!\!\!\!\!\!\!&&C_{T,x,y}\delta^{\frac{1}{2}\wedge \gamma_2}.
\end{eqnarray*}
The proof is complete.
\end{proof}

In order to estimate the difference process $X_{t}^{\ep}-\hat{X}_{t}^{\ep}$. We first construct the following stopping time, for fixed $\ep\in (0,\ep_0), R\geq R_0, M\geq 0$,
\begin{eqnarray*}
\tau^{\ep}_{R,M}:=\!\!\!\!\!\!\!\!&&\inf\left\{t\geq 0: |X_{t}^{\ep}|+\int^t_0|Y^{\ep}_s|^{2\theta_2}ds+\int^t_0|\hat Y^{\ep}_s|^{4\theta_1\vee 2\theta_2}ds+\int^t_0 \left[K_s(1)\right]^2ds\geq R\right\}\\
&&\wedge \inf\left\{t\geq 0: \int^t_0 |K_s(R)|^4ds\geq M\right\},
\end{eqnarray*}
and $\inf\{\emptyset\}:=\infty$.
\begin{lemma} \label{DEX} Assume that either \textbf{$({\bf H}_1)$} with $\lambda_1=0$ and \textbf{$({\bf A}_{\tilde \theta_1})$} hold or \textbf{$({\bf H}_1)$} with $\lambda_1>0$ and \textbf{$({\bf A}_{k})$} with some $k>\tilde \theta_2$ hold, where $\tilde \theta_1=\max\{2\theta_6, 4\alpha_2\}$ and $\tilde \theta_2=\max\{2\theta_6, \theta_5\theta_4, 4\alpha_2, 2\alpha_1\theta_4\}$. Then for any $T, M>0$ and $R\geq R_0$, there exists a constant $C_{T,R,M}>0$ such that
\begin{eqnarray*}
\mathbb{E}\Big(\sup_{t\in [0, T\wedge\tau^{\ep}_{R,M}]}|X_{t}^{\ep}-\hat{X}_{t}^{\ep}|^2\Big)\leq C_{T,R,M}\delta^{\gamma},
\end{eqnarray*}
where $\gamma=\min\{2\gamma_1,\gamma_2, 1/2\}$.
\end{lemma}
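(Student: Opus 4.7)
The structural observation driving the proof is that $X^{\ep}_t$ and $\hat X^{\ep}_t$ share the same stochastic integral, so their difference is of finite variation:
$$X^{\ep}_t-\hat X^{\ep}_t=\int_{0}^{t}\bigl[b(s,X^{\ep}_s,Y^{\ep}_s)-b(s(\delta),X^{\ep}_{s(\delta)},\hat Y^{\ep}_s)\bigr]\,ds.$$
Cauchy--Schwarz then gives
$$\sup_{t\in[0,T\wedge\tau^{\ep}_{R,M}]}|X^{\ep}_t-\hat X^{\ep}_t|^{2}\leq T\int_{0}^{T\wedge\tau^{\ep}_{R,M}}\bigl|b(s,X^{\ep}_s,Y^{\ep}_s)-b(s(\delta),X^{\ep}_{s(\delta)},\hat Y^{\ep}_s)\bigr|^{2}\,ds,$$
so the lemma reduces to an $L^{1}(\Omega)$-estimate of the integral on the right.

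My plan is to insert $b(s,X^{\ep}_s,\hat Y^{\ep}_s)$ and $b(s,X^{\ep}_{s(\delta)},\hat Y^{\ep}_s)$, splitting the integrand into three telescoping pieces (a fast-variable, a slow-variable and a time-regularity difference), and bound each one separately. For the time-regularity piece the second inequality in $(\mathbf{H}_1)(ii)$ gives a factor $\delta^{2\gamma_1}$ multiplied by a polynomial in $|\hat Y^{\ep}_s|^{\theta_2}$, $|X^{\ep}_{s(\delta)}|^{\theta_3}$ and $Z_T$; the pointwise bound $|X^{\ep}|\leq R$ on the stopping set, the control $\int_0^{T\wedge\tau^{\ep}_{R,M}}|\hat Y^{\ep}_s|^{2\theta_2}ds\leq R$, and $\EE Z_T^{2}<\infty$ then yield a contribution of size $C_{T,R}\delta^{2\gamma_1}$. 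For the slow-variable piece the local Lipschitz bound $(\mathbf{H}_1)(i)$ is applicable since $|X^{\ep}_s|,|X^{\ep}_{s(\delta)}|\leq R$ on $\{s\leq\tau^{\ep}_{R,M}\}$, and gives the pointwise factor $K_s(R)^{2}(1+|\hat Y^{\ep}_s|^{\theta_1})^{2}|X^{\ep}_s-X^{\ep}_{s(\delta)}|^{2}$; the deterministic bound $\int_0^{T\wedge\tau^{\ep}_{R,M}}K_s(R)^{4}ds\leq M$ coupled with Cauchy--Schwarz in $s$ and a routine higher-moment strengthening of Lemma~\ref{COX} (its proof reapplied to $|X^{\ep}_s-X^{\ep}_{s(\delta)}|^{2p}$ produces $\leq C_{T,p,x,y}\delta^{p}$) delivers a bound of order $\delta$. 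For the fast-variable piece the first inequality in $(\mathbf{H}_1)(ii)$ gives $\leq C_T|Y^{\ep}_s-\hat Y^{\ep}_s|^{2}\bigl(|Y^{\ep}_s|^{2\theta_2}+|\hat Y^{\ep}_s|^{2\theta_2}+K_s(1)^{2}+|X^{\ep}_s|^{2\theta_3}\bigr)$; combining Lemma~\ref{DEY} with the a-priori moment bounds of Lemmas~\ref{PMY} and~\ref{MDY} and the stopping-time integral controls on $|Y^{\ep}|^{2\theta_2}$, $|\hat Y^{\ep}|^{2\theta_2}$ and $K_s(1)^{2}$ yields $C_{T,R,M}\delta^{1/2\wedge\gamma_2}$. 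Adding the three contributions produces the asserted bound with $\gamma=\min\{2\gamma_1,\gamma_2,1/2\}$.

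The principal technical obstacle is the fast-variable piece: Lemma~\ref{DEY} provides only the second-moment rate $\sup_s\EE|Y^{\ep}_s-\hat Y^{\ep}_s|^{2}\lesssim \delta^{1/2\wedge\gamma_2}$, while the polynomial multiplying it must be absorbed without degrading the exponent of $\delta$. I expect to handle this by a Cauchy--Schwarz--Fubini argument in $(\omega,s)$, separating the four summands of the polynomial: $|X^{\ep}_s|^{2\theta_3}\leq R^{2\theta_3}$ is pointwise bounded and trivially integrated against $\EE|Y^{\ep}_s-\hat Y^{\ep}_s|^{2}$; the $K_s(1)^{2}$ summand is controlled via $\sup_t\EE K_t(1)^{4}<\infty$ combined with a fourth-moment analog of Lemma~\ref{DEY} (derivable by applying It\^o's formula to $|Y^{\ep}-\hat Y^{\ep}|^{4}$ along the same lines); and the $|Y^{\ep}|^{2\theta_2}$ and $|\hat Y^{\ep}|^{2\theta_2}$ summands are absorbed using the stopping-time integral bounds together with the high-order moments of $Y^{\ep}$ and $\hat Y^{\ep}$ supplied by Lemmas~\ref{PMY} and~\ref{MDY}. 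The appearance of the thresholds $\tilde\theta_1$ and $\tilde\theta_2$ in the hypotheses of the lemma is precisely dictated by the order of the moments required at this step, which in turn is ensured by $(\mathbf{A}_k)$ with $k$ sufficiently large.
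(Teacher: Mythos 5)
Your skeleton is the paper's: the stochastic integrals cancel, the drift difference is split into the same three telescoping pieces, and Lemmas \ref{COX} and \ref{DEY} together with the stopping time $\tau^{\ep}_{R,M}$ supply the rates. But your very first move --- Cauchy--Schwarz in $s$ to put the \emph{square} of the integrand inside the time integral --- is where the argument goes wrong, because it welds the small factor to the polynomial weight pointwise in $s$. The paper instead keeps the factorized form $\bigl[\int_0^{T\wedge\tau} fg\,ds\bigr]^2\le\int_0^{T\wedge\tau}f^2\,ds\cdot\int_0^{T\wedge\tau}g^2\,ds$, with $f=|Y^{\ep}_s-\hat Y^{\ep}_s|$ (resp.\ $|X^{\ep}_s-X^{\ep}_{s(\delta)}|$) and $g$ the polynomial weight. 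The entire point of the definition of $\tau^{\ep}_{R,M}$ is that $\int_0^{\cdot}|Y^{\ep}_s|^{2\theta_2}ds$, $\int_0^{\cdot}|\hat Y^{\ep}_s|^{4\theta_1\vee 2\theta_2}ds$, $\int_0^{\cdot}[K_s(1)]^2ds$, $\int_0^{\cdot}[K_s(R)]^4ds$ and $|X^{\ep}|$ are all bounded \emph{pathwise} by $R$ or $M$ before the stopping time; hence the weight integral is a deterministic constant $C_{T,R,M}$, and only $\EE\int_0^{T}|Y^{\ep}_s-\hat Y^{\ep}_s|^2ds$ and $\EE\int_0^{T}|X^{\ep}_s-X^{\ep}_{s(\delta)}|^2ds$ remain, which Fubini plus Lemmas \ref{DEY} and \ref{COX} handle with no further moment input.

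Your workaround for the pointwise-coupled version is not merely extra work; it does not go through under the stated hypotheses. (a) The ``fourth-moment analog of Lemma \ref{DEY}'' requires $2\langle f(t,x,y_1)-f(t,x,y_2),y_1-y_2\rangle+3\|g(t,x,y_1)-g(t,x,y_2)\|^2\le-\beta'|y_1-y_2|^2$, which does \emph{not} follow from \eref{sm}: the extra $2\|\Delta g\|^2\le 2C_T^2(|y_1-y_2|+\cdots)^2$ can overwhelm $-\beta|y_1-y_2|^2$, so the dissipativity needed for the comparison argument may fail. (b) Your Cauchy--Schwarz in $(\omega,s)$ for the fast and slow pieces calls for $\EE\int|Y^{\ep}_s|^{4\theta_2}ds$, $\EE\int K_s(R)^4(1+|\hat Y^{\ep}_s|^{\theta_1})^4ds$ and $\EE|X^{\ep}_{t+h}-X^{\ep}_t|^4\le C h^2$ (hence $4\theta_6$-moments of $Y^{\ep}$); none of these are guaranteed by \textbf{$({\bf A}_{\tilde\theta_1})$} with $\tilde\theta_1=\max\{2\theta_6,4\alpha_2\}$, which need not dominate $4\theta_2$ or $4\theta_6$. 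Relatedly, your closing remark misattributes the thresholds: $\tilde\theta_1,\tilde\theta_2$ are inherited from Lemmas \ref{COX} and \ref{DEY}; the polynomial weights appearing in the present lemma are absorbed by the stopping time alone and cost no moments. The fix is simply to undo the initial Cauchy--Schwarz and argue as above.
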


\begin{proof}
Recall that
\begin{eqnarray*}
X^{\ep}_t=x+\int^t_0 b(s, X^{\ep}_s, Y^{\ep}_s)ds+\int^t_0\sigma(s, X^{\ep}_s)dW^{1}_s
\end{eqnarray*}
and
\begin{eqnarray*}
\hat{X}^{\ep}_t=x+\int^t_0 b(s(\delta), X^{\ep}_{s(\delta)}, \hat{Y}^{\ep}_s)ds+\int^t_0\sigma(s, X^{\ep}_s)dW^{1}_s.
\end{eqnarray*}
Then we have
\begin{eqnarray*}
X^{\ep}_t-\hat{X}^{\ep}_t=\int^t_0\big[b(s, X^{\ep}_s, Y^{\ep}_s)-b(s(\delta), X^{\ep}_{s(\delta)}, \hat{Y}^{\ep}_s)\big]ds.
\end{eqnarray*}
By Lemma \ref{COX} and \ref{DEY} we have
\begin{eqnarray*}
&&\EE\left(\sup_{t\in[0, T\wedge\tau^{\ep}_{R,M}]}|X^{\ep}_{t} -\hat{X}^{\ep}_{t}|^2\right)\\
\leq\!\!\!\!\!\!\!\!&&\EE\left[\int^{T\wedge\tau^{\ep}_{R,M}}_0\left|b(s, X^{\ep}_s, Y^{\ep}_s)-b(s(\delta), X^{\ep}_{s(\delta)}, \hat{Y}^{\ep}_s)\right| ds\right]^2\nonumber\\
\leq\!\!\!\!\!\!\!\!&&C\EE\left[\int^{T\wedge\tau^{\ep}_{R,M}}_0\!\!\left|b(s, X^{\ep}_s, Y^{\ep}_s)-b(s, X^{\ep}_s, \hat Y^{\ep}_s)\right|ds\right]^2\!\!\!+\!\!C\EE\left[\int^{T\wedge\tau^{\ep}_{R,M}}_0\!\!\left|b(s, X^{\ep}_s, \hat Y^{\ep}_s)-b(s, X^{\ep}_{s(\delta)}, \hat Y^{\ep}_s)\right|ds\right]^2\\
&&+C\EE\left[\int^{T\wedge\tau^{\ep}_{R,M}}_0\left|b(s, X^{\ep}_{s(\delta)}, \hat Y^{\ep}_s)-b(s(\delta), X^{\ep}_{s(\delta)}, \hat{Y}^{\ep}_s)\right|ds\right]^2\nonumber\\
\leq\!\!\!\!\!\!\!\!&&
C\EE\left[\int^{T\wedge\tau^{\ep}_{R,M}}_0 |Y^{\ep}_s-\hat Y^{\ep}_s|^2ds \cdot\int^{T\wedge\tau^{\ep}_{R,M}}_0 \left(|Y^{\ep}_s|^{2\theta_2}+|\hat Y^{\ep}_s|^{2\theta_2}+|X^{\ep}_s|^{2\theta_3}+[K_{s}(1)]^2\right)ds\right]\\
&&+C\EE\left[\int^{T\wedge\tau^{\ep}_{R,M}}_0|X^{\ep}_s-X^{\ep}_{s(\delta)}|^2ds\left(\int^{T\wedge\tau^{\ep}_{R,M}}_0 [K_s(R)]^4ds\right)^{1/2}\left(\int^{T\wedge\tau^{\ep}_{R,M}}_0\left(1+|\hat Y^{\ep}_s|^{4\theta_1}\right)ds\right)^{1/2}\right]\nonumber\\
&&+\delta^{2\gamma_1}C_T\EE\int^{T\wedge\tau^{\ep}_{R,M}}_0 \left(|X^{\ep}_{s(\delta)}|^{2\theta_3}+|\hat Y^{\ep}_s|^{2\theta_2}\right)ds+\delta^{2\gamma_1}C_T\EE Z_{T}^2 \\
\leq\!\!\!\!\!\!\!\!&& C_{T,R}\EE\int^{T\wedge\tau^{\ep}_{R,M}}_0 |Y^{\ep}_s-\hat Y^{\ep}_s|^2ds+C_{R,M}\EE\int^{T\wedge\tau^{\ep}_{R,M}}_0|X^{\ep}_s-X^{\ep}_{s(\delta)}|^2ds+C_{T,R,M}\delta^{2\gamma_1}\\
\leq\!\!\!\!\!\!\!\!&& C_{T,R,M}\delta^{\min\{2\gamma_1,\gamma_2, 1/2\}} . \label{bs}
\end{eqnarray*}
The proof is complete.
\end{proof}

\subsection{The frozen equation}
We first introduce the frozen equation associated to the fast motion for fixed $t>0$ and fixed slow component $x\in \RR^n$.
 \begin{equation}\left\{\begin{array}{l}\label{FEQ}
\displaystyle
dY_{s}=f(t, x,Y_{s})ds+g(t, x,Y_{s})d\tilde{W}_{s}^{2},\\
Y_{0}=y,\\
\end{array}\right.
\end{equation}
where $\{\tilde {W}_{s}^{2}\}_{s\geq 0}$ is a $d_2$-dimensional Brownian motion on another complete probability space $(\tilde{\Omega}, \tilde{\mathscr{F}}, \tilde{\mathbb{P}})$ and $\{\tilde{\mathscr{F}}_{s},s\geq 0\}$ is the natural filtration generated by $\{\tilde{W}_{s}^{2}\}_{s\geq 0}$.
If \textbf{$({\bf H}_2)$} and \textbf{$({\bf A}_2)$} hold, then it is easy to prove for any fixed $t>0$, $x\in \RR^n$ and any initial data $y\in \RR^m$,
Eq. $(\ref{FEQ})$ has a unique strong solution $\{Y_{s}^{t,x,y}\}_{s\geq 0}$, which is a time homogeneous  Markov process. Let $\{P^{t,x}_s\}_{s\geq 0}$ be the transition semigroup of $\{Y_{s}^{t,x,y}\}_{s\geq 0}$, $i.e.$ for any bounded measurable function $\varphi:\RR^m\rightarrow \mathbb{R}$,
$$
P^{t,x}_s\varphi(y):=\tilde \EE\varphi(Y_{s}^{t,x,y}), \quad y\in\RR^m, s\geq 0,
$$
where $\tilde \EE$ is the expectation on $(\tilde{\Omega}, \tilde{\mathscr{F}}, \tilde{\mathbb{P}})$.
\begin{lemma}\label{L 3.6}
Suppose that \textbf{$({\bf A}_k)$} holds for some $k\geq 2$. Then there exists $\tilde \beta_k>0$ such that for any $x\in\RR^n, y\in \RR^m$, $s\geq 0$ and $T>0$ with $t\in [0, T]$,
\begin{eqnarray}
\tilde{\mathbb{E}}|Y_{s}^{t,x,y}|^k\leq e^{-\tilde \beta_k s}|y|^k+C_{T,k}(1+|x|^{\frac{2k}{\theta_4}}). \label{F3.4}
\end{eqnarray}
\end{lemma}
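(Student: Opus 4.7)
The plan is to apply Itô's formula to $|Y_s^{t,x,y}|^k$ and exploit condition \textbf{$({\bf A}_k)$} to obtain a dissipative differential inequality, then use a comparison argument. Since $t$ and $x$ are frozen parameters and the coefficients $f(t,x,\cdot)$, $g(t,x,\cdot)$ are fixed locally Lipschitz maps with strict monotonicity from $\textbf{$({\bf H}_2)$}$, equation \eref{FEQ} admits a unique strong solution with sufficient moments, so the formal computation below can be justified rigorously via standard localization by stopping times.

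First I would apply It\^o's formula to $|Y_s^{t,x,y}|^k$ on the frozen SDE, take $\tilde{\mathbb{E}}$ (killing the martingale after localization), and use the elementary pointwise inequality $|\langle y, g(t,x,y)\rangle|^2 \leq |y|^2\|g(t,x,y)\|^2$ to combine the second-order terms. This reduces the bulk of the right-hand side to
\begin{equation*}
\frac{k}{2}\,\tilde{\mathbb{E}}\!\left[|Y_s^{t,x,y}|^{k-2}\bigl(2\langle Y_s^{t,x,y}, f(t,x,Y_s^{t,x,y})\rangle+(k-1)\|g(t,x,Y_s^{t,x,y})\|^2\bigr)\right].
\end{equation*}
Now condition \textbf{$({\bf A}_k)$} applies and bounds the bracket by $-\beta_k|Y_s^{t,x,y}|^2-\lambda_2|Y_s^{t,x,y}|^{\theta_4}+C_{T,k}(|x|^{4/\theta_4}+1)$. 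Multiplying by $|Y_s^{t,x,y}|^{k-2}\geq 0$ (using $k\geq 2$) yields a usable $-\frac{k\beta_k}{2}|Y_s^{t,x,y}|^k$ dissipative term, while the nonnegative $-\lambda_2|Y_s^{t,x,y}|^{k-2+\theta_4}$ contribution can simply be discarded.

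The main technical point is the Young-type step that converts the constant $|x|^{4/\theta_4}$ in \textbf{$({\bf A}_k)$} into the desired $|x|^{2k/\theta_4}$ in the conclusion. Specifically, for any $\eta>0$ and $k\geq 2$, Young's inequality with conjugate exponents $k/(k-2)$ and $k/2$ (interpreted trivially when $k=2$) gives
\begin{equation*}
|Y_s^{t,x,y}|^{k-2}\,C_{T,k}(|x|^{4/\theta_4}+1)\,\leq\, \eta|Y_s^{t,x,y}|^{k}+C_{T,k,\eta}\bigl(1+|x|^{2k/\theta_4}\bigr).
\end{equation*}
Choosing $\eta$ small enough that it is absorbed into $-\frac{k\beta_k}{2}|Y_s^{t,x,y}|^k$, one arrives at
\begin{equation*}
\frac{d}{ds}\tilde{\mathbb{E}}|Y_s^{t,x,y}|^k \leq -\tilde{\beta}_k\,\tilde{\mathbb{E}}|Y_s^{t,x,y}|^k + C_{T,k}\bigl(1+|x|^{2k/\theta_4}\bigr),
\end{equation*}
for some $\tilde{\beta}_k>0$ depending only on $k$ and $\beta_k$.

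Finally, the comparison theorem (or Gronwall's inequality applied to $e^{\tilde{\beta}_k s}\tilde{\mathbb{E}}|Y_s^{t,x,y}|^k$) yields
\begin{equation*}
\tilde{\mathbb{E}}|Y_s^{t,x,y}|^k\leq e^{-\tilde{\beta}_k s}|y|^k + \frac{C_{T,k}}{\tilde{\beta}_k}\bigl(1+|x|^{2k/\theta_4}\bigr)\bigl(1-e^{-\tilde{\beta}_k s}\bigr),
\end{equation*}
which is \eref{F3.4} after renaming the constant. I expect the only real obstacle to be the bookkeeping in the Young step and the localization needed to make the It\^o calculation rigorous; once those are in place the estimate is essentially mechanical.
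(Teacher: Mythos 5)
Your proposal is correct and follows essentially the same route as the paper: It\^o's formula for $|Y_s^{t,x,y}|^k$, the bound $|\langle y,g\rangle|^2\leq |y|^2\|g\|^2$ to assemble the $(k-1)\|g\|^2$ term, condition \textbf{$({\bf A}_k)$}, and the comparison theorem. The only difference is that you make explicit the Young-inequality step converting $|Y|^{k-2}(|x|^{4/\theta_4}+1)$ into $\eta|Y|^k+C(1+|x|^{2k/\theta_4})$, which the paper performs silently when passing directly to the differential inequality; this is a welcome clarification rather than a deviation.
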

\begin{proof}
Note that
$$
Y_{s}^{t,x,y}=y+\int^s_0 f(t,x, Y_{r}^{t,x,y})dr+\int^s_0 g(t,x,Y^{t,x,y}_{r})d\tilde {W}_{r}^{2}.
$$
By the It\^o's formula we have
\begin{eqnarray*}
\tilde{\mathbb{E}}|Y_{s}^{t,x,y}|^{k}=\!\!\!\!\!\!\!\!&&k\int^s_0\tilde{\mathbb{E}}\left[| Y_{r}^{t,x,y}|^{k-2}\langle f(t,x,Y_{r}^{t,x,y}),Y_{r}^{t,x,y}\rangle\right]dr+\frac{k}{2}\int^s_0\tilde{\mathbb{E}}\left[|Y_{r}^{t,x,y}|^{k-2}\|g(t,x,Y_{r}^{t,x,y})\|^2\right]dr \nonumber \\
 \!\!\!\!\!\!\!\!&& +\frac{k(k-2)}{2}\int^s_0\tilde{\mathbb{E}}\left[|Y_{r}^{t,x,y}|^{k-4}\cdot|\langle Y_{r}^{t,x,y}, g(t,x,Y_{r}^{t,x,y})\rangle|^2\right]dr.
\end{eqnarray*}
Then assumption \textbf{$({\bf A}_{k})$} yields that there exists $\tilde \beta_k>0$ such that for any $t\in [0, T]$
\begin{eqnarray*}
\frac{d}{ds}\tilde{\mathbb{E}}|Y_{s}^{t,x,y}|^{k}\leq\!\!\!\!\!\!\!\!&&\frac{k}{2}\tilde{\mathbb{E}}\left[| Y_{s}^{t,x,y}|^{k-2}\left(2\langle f(t,x,Y_{s}^{t,x,y}),Y_{s}^{t,x,y}\rangle+(k-1)\|g(t,x, Y_{s}^{t,x,y})\|^2\right)\right]\\
 \leq\!\!\!\!\!\!\!\!&&-\tilde \beta_k\tilde{\mathbb{E}}|Y_{s}^{t,x,y}|^{k}
+C_{T,k}\left(|x|^{\frac{2k}{\theta_4}}+1\right).
\end{eqnarray*}
Hence, by the comparison theorem we have
\begin{eqnarray*}
\tilde{\mathbb{E}}|Y_{s}^{t,x,y}|^{k}\leq\!\!\!\!\!\!\!\!&&|y|^{k}e^{-\tilde \beta_k s}+C_{T,k}(1+|x|^{\frac{2k}{\theta_4}}) \int^s_0 e^{-\tilde \beta_k(s-r)}dr\nonumber\\
\leq\!\!\!\!\!\!\!\!&&|y|^{k}e^{-\tilde \beta_k s}+C_{T,k}(1+|x|^{\frac{2k}{\theta_4}}).
\end{eqnarray*}
The proof is complete.
\end{proof}

\begin{lemma}\label{L 3.61}
There exists $\beta>0$ such that for any $t,s\geq 0$, $x\in\RR^n, y_1,y_2\in \RR^m$,
\begin{eqnarray*}
\tilde{\mathbb{E}}|Y_{s}^{t,x,y_1}-Y_{s}^{t,x,y_2}|^2\leq e^{-\beta s}|y_1-y_2|^2.
\end{eqnarray*}
\end{lemma}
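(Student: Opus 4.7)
The proof is a standard application of Itô's formula combined with the strict monotonicity condition \eref{sm} from \textbf{$({\bf H}_2)$(i)}. Writing $Z_s := Y_s^{t,x,y_1} - Y_s^{t,x,y_2}$, the process $Z_s$ satisfies the SDE
\begin{equation*}
dZ_s = \bigl[f(t,x,Y_s^{t,x,y_1}) - f(t,x,Y_s^{t,x,y_2})\bigr]\,ds + \bigl[g(t,x,Y_s^{t,x,y_1}) - g(t,x,Y_s^{t,x,y_2})\bigr]\,d\tilde W_s^2,
\end{equation*}
with initial condition $Z_0 = y_1 - y_2$.

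Applying Itô's formula to $|Z_s|^2$ and taking expectation on $(\tilde\Omega,\tilde{\mathscr F},\tilde{\mathbb P})$, the stochastic integral term vanishes (after a routine localization by stopping times if needed, which is legitimate since Lemma \ref{L 3.6} already provides moment bounds on $Y^{t,x,y_i}_s$). This yields
\begin{equation*}
\tilde{\mathbb E}|Z_s|^2 = |y_1-y_2|^2 + \int_0^s \tilde{\mathbb E}\Bigl[2\langle Z_r, f(t,x,Y_r^{t,x,y_1})-f(t,x,Y_r^{t,x,y_2})\rangle + \|g(t,x,Y_r^{t,x,y_1})-g(t,x,Y_r^{t,x,y_2})\|^2\Bigr]\,dr.
\end{equation*}

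The strict monotonicity assumption \eref{sm} bounds the integrand pointwise by $-\beta|Z_r|^2$, so
\begin{equation*}
\frac{d}{ds}\tilde{\mathbb E}|Z_s|^2 \leq -\beta\, \tilde{\mathbb E}|Z_s|^2.
\end{equation*}
Invoking the comparison theorem (or equivalently Gronwall's lemma for the function $s\mapsto \tilde{\mathbb E}|Z_s|^2$) gives the claimed exponential decay $\tilde{\mathbb E}|Z_s|^2 \leq e^{-\beta s}|y_1-y_2|^2$, with the same constant $\beta$ as in \eref{sm}.

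There is essentially no obstacle: the only technical point worth mentioning is the justification for discarding the martingale contribution, which follows from the linear growth of $g$ in \textbf{$({\bf H}_2)$(ii)} together with the moment bounds in Lemma \ref{L 3.6} (taking $k=2$, guaranteed by \textbf{$({\bf A}_2)$}). Note that the result is uniform in $t\geq 0$ and $x\in\RR^n$ because $\beta$ comes from \eref{sm}, which holds uniformly in these parameters.
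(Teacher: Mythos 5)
Your proof is correct and follows essentially the same route as the paper: apply It\^o's formula to $|Y_s^{t,x,y_1}-Y_s^{t,x,y_2}|^2$, invoke the strict monotonicity condition \eref{sm} to obtain the differential inequality $\frac{d}{ds}\tilde{\mathbb E}|Z_s|^2\leq -\beta\,\tilde{\mathbb E}|Z_s|^2$, and conclude by the comparison theorem. Your additional remark justifying the vanishing of the martingale term via localization and the moment bounds is a welcome (if routine) precision that the paper leaves implicit.
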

\begin{proof}
Note that
\begin{eqnarray*}
Y_{s}^{t,x,y_1}-Y_{s}^{t,x,y_2}=\!\!\!\!\!\!\!\!&&y_1-y_2+\int^s_0  f(t,x, Y_{r}^{t,x,y_1})-f(t,x, Y_{r}^{t,x,y_2}) dr\\
&&+\int^s_0 \left[g(t,x,Y^{t,x,y_1}_{r})-g(t,x,Y^{t,x,y_2}_{r})\right]d\tilde{W}_{r}^{2}.
\end{eqnarray*}
By It\^o's formula we obtain
\begin{eqnarray*}
\tilde{\mathbb{E}}|Y_{s}^{t,x,y_1}-Y_{s}^{t,x,y_2}|^{2}=\!\!\!\!\!\!\!\!&&\int^s_0\tilde{\mathbb{E}}\left[2\langle f(t,x, Y_{r}^{t,x,y_1})-f(t,x, Y_{r}^{t,x,y_2}), Y_{r}^{t,x,y_1}-Y_{r}^{t,x,y_2}\rangle \right]dr\\
&&+\int^s_0\tilde{\mathbb{E}}\|g(t,x,Y^{t,x,y_1}_{r})-g(t,x,Y^{t,x,y_2}_{r})\|^2 dr. \nonumber
\end{eqnarray*}
Then condition \eref{sm} in \textbf{$({\bf H}_{2})$} yields that there exist $\beta>0$ and $C\geq 0$ such that
\begin{eqnarray*}
\frac{d}{ds}\tilde{\mathbb{E}}|Y_{s}^{t,x,y_1}-Y_{s}^{t,x,y_2}|^{2}
 \leq\!\!\!\!\!\!\!\!&&-\beta\tilde{\mathbb{E}}|Y_{s}^{t,x,y_1}-Y_{s}^{t,x,y_2}|^{2}.
\end{eqnarray*}
The comparison theorem implies that
\begin{eqnarray*}
\tilde{\mathbb{E}}|Y_{s}^{t,x,y_1}-Y_{s}^{t,x,y_2}|^{2}\leq\!\!\!\!\!\!\!\!&&e^{-\beta s}|y_1-y_2|^2.
\end{eqnarray*}
The proof is complete.
\end{proof}

\begin{proposition}\label{invariant}  Suppose that \textbf{$({\bf A}_k)$} holds for some $k\geq 2$. For any $t\in [0, T]$, $x\in\RR^n$, $\{P^{t,x}_s\}_{s\geq 0}$ has a unique invariant measure $\mu^{t,x}$. Moreover,
\begin{eqnarray}
\int_{\RR^m}|z|^k\mu^{t,x}(dz)\leq C_{T,k}(1+|x|^{\frac{2k}{\theta_4}}). \label{F3.5}
\end{eqnarray}
\end{proposition}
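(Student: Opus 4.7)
The plan is to construct $\mu^{t,x}$ via a Krylov--Bogoliubov argument starting from the fixed initial condition $y_0=0$, read off the moment bound \eref{F3.5} directly from the Portmanteau theorem applied to the approximating time averages, and derive uniqueness from the exponential contraction of Lemma \ref{L 3.61} without needing any a priori information about other invariant measures.

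For existence and the moment bound, fix $t\in[0,T]$ and $x\in\RR^n$, and set
$$
\mu_S:=\frac{1}{S}\int_0^S \tilde{\mathbb{P}}\circ (Y_r^{t,x,0})^{-1}\,dr, \qquad S>0.
$$
Lemma \ref{L 3.6} with $k=2$ (which is available since $\textbf{$({\bf A}_k)$}$ holds for some $k\geq 2$) gives $\sup_{r\geq 0}\tilde{\mathbb{E}}|Y_r^{t,x,0}|^2\leq C_{T,2}(1+|x|^{4/\theta_4})$, so Chebyshev yields tightness of $\{\mu_S\}_{S\geq 1}$. The Feller property of $P^{t,x}_s$ is contained in Lemma \ref{L 3.61} (the $L^2$-contraction implies in-probability continuity of $y\mapsto Y_s^{t,x,y}$, and dominated convergence upgrades this to continuity of $P^{t,x}_s\varphi$ for each bounded continuous $\varphi$), so any weak subsequential limit $\mu^{t,x}=\lim_n\mu_{S_n}$ is invariant. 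Since $z\mapsto|z|^k$ is continuous and nonnegative, the Portmanteau theorem and Lemma \ref{L 3.6} with $y=0$ give
$$
\int_{\RR^m}|z|^k\mu^{t,x}(dz)\;\leq\;\liminf_{n\to\infty}\frac{1}{S_n}\int_0^{S_n}\tilde{\mathbb{E}}|Y_r^{t,x,0}|^k\,dr\;\leq\;C_{T,k}(1+|x|^{2k/\theta_4}),
$$
which proves \eref{F3.5}.

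For uniqueness, let $\nu$ be any invariant probability measure for $\{P^{t,x}_s\}_{s\geq 0}$ and let $\varphi:\RR^m\to\RR$ be bounded and Lipschitz. Lemma \ref{L 3.61} together with Jensen's inequality gives
$$
|P^{t,x}_s\varphi(y_1)-P^{t,x}_s\varphi(y_2)|\leq \mathrm{Lip}(\varphi)\bigl(\tilde{\mathbb{E}}|Y_s^{t,x,y_1}-Y_s^{t,x,y_2}|^2\bigr)^{1/2}\leq \mathrm{Lip}(\varphi)\,e^{-\beta s/2}|y_1-y_2|.
$$
Averaging $y_2$ against $\mu^{t,x}$, using the invariance of $\mu^{t,x}$ and the first-moment bound \eref{F3.5} (applicable since $k\geq 2$), we get
$$
\left|P^{t,x}_s\varphi(y)-\int\varphi\,d\mu^{t,x}\right|\leq \mathrm{Lip}(\varphi)\,e^{-\beta s/2}\Bigl(|y|+\textstyle\int|z|\mu^{t,x}(dz)\Bigr)\xrightarrow[s\to\infty]{}0,\quad y\in\RR^m.
$$
Since $|P^{t,x}_s\varphi|\leq\|\varphi\|_\infty$ uniformly in $s$, dominated convergence together with the invariance of $\nu$ yields
$$
\int\varphi\,d\nu=\int P^{t,x}_s\varphi\,d\nu\xrightarrow[s\to\infty]{}\int\varphi\,d\mu^{t,x},
$$
so $\int\varphi\,d\nu=\int\varphi\,d\mu^{t,x}$ for every bounded Lipschitz $\varphi$; by a standard density argument this forces $\nu=\mu^{t,x}$.

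The main obstacle I anticipate is making the uniqueness argument work without assuming a priori that the competitor $\nu$ has any moments; the trick is to transfer the moment requirement onto $\mu^{t,x}$ by integrating one variable against $\mu^{t,x}$ rather than attempting a symmetric coupling, which is the step that genuinely needs \eref{F3.5} to be in hand before uniqueness is established.
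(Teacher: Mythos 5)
Your proof is correct, and while it shares the paper's skeleton (Krylov--Bogoliubov for existence, the contraction of Lemma \ref{L 3.61} for uniqueness), it differs in two worthwhile ways. First, for the moment bound \eref{F3.5} the paper argues a priori: it takes an arbitrary invariant measure $\mu^{t,x}$, writes $\int|z|^k\mu^{t,x}(dz)=\int\tilde{\mathbb{E}}|Y_s^{t,x,z}|^k\mu^{t,x}(dz)\leq e^{-\tilde\beta_k s}\int|z|^k\mu^{t,x}(dz)+C_{T,k}(1+|x|^{2k/\theta_4})$ via Lemma \ref{L 3.6}, and absorbs the first term by choosing $s$ large --- an argument that, as stated, tacitly presupposes the $k$-th moment is finite (the standard fix is to run it on $|z|^k\wedge N$ and let $N\to\infty$). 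You instead obtain the bound only for the measure you construct, by lower semicontinuity of $\int|z|^k\,d(\cdot)$ along the Ces\`aro averages $\mu_{S_n}$, which avoids that finiteness issue entirely; since your uniqueness step then identifies every invariant measure with the constructed one, the bound transfers and nothing is lost. Second, in the uniqueness step the paper's decay estimate is asserted for ``any invariant measure'' and therefore implicitly relies on the moment bound holding for every such measure, whereas you deliberately integrate only against the constructed $\mu^{t,x}$ and test the competitor $\nu$ only through the bounded function $P_s^{t,x}\varphi$, so no moment information about $\nu$ is ever needed; the concluding step $\int\varphi\,d\nu=\int P_s^{t,x}\varphi\,d\nu\to\int\varphi\,d\mu^{t,x}$ by dominated convergence is the piece the paper leaves implicit. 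Your version is slightly longer but logically tighter; the paper's a priori moment argument has the virtue of applying directly to every invariant measure, which is convenient elsewhere (e.g.\ it is the form quoted in Proposition \ref{Rem 4.1}), but after your uniqueness step the two statements coincide anyway. One small point to keep explicit: the availability of Lemma \ref{L 3.6} with exponent $2$ for the tightness step uses Remark \ref{R2.1}(3), namely that $({\bf A}_k)$ for $k\geq 2$ implies $({\bf A}_2)$.
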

\begin{proof} We first check \eref{F3.5}. If $\mu^{t,x}$ is an invariant measure of $\{P^{t,x}_s\}_{s\geq 0}$,  it follows from Lemma \ref{L 3.6} that for all $s>0$
\begin{eqnarray*}
\int_{\RR^m}|z|^k\mu^{t,x}(dz)=\!\!\!\!\!\!\!\!&&\int_{\RR^m}\tilde{\mathbb{E}}|Y_{s}^{t,x,z}|^k\mu^{t,x}(dz)\\
\leq\!\!\!\!\!\!\!\!&&\int_{\RR^m}\left[e^{-\tilde \beta_k s}|z|^k+C_{T,k}(1+|x|^{\frac{2k}{\theta_4}})\right]\mu^{t,x}(dz)\\
=\!\!\!\!\!\!\!\!&&e^{-\tilde \beta_k s}\int_{\RR^m}|z|^k\mu^{t,x}(dz)+C_{T,k}(1+|x|^{\frac{2k}{\theta_4}}).
\end{eqnarray*}
Taking $s$ large enough such that $e^{-\tilde \beta_k s}\leq \frac{1}{2}$, we obtain \eref{F3.5}.

The estimate \eref{F3.4} and the classical Bogoliubov-Krylov argument imply the existence of invariant measures. For the uniqueness, it is sufficient to prove that for any Lipschitz function $\varphi(x):\RR^m\rightarrow \RR$ and any invariant measure $\mu^{t,x}$ we  have
$$
\left|P^{t,x}_s\varphi(y)-\int_{\RR^m}\varphi(z)\mu^{t,x}(dz)\right|\leq C_T Lip(\varphi)e^{-\frac{\beta s}{2} }(1+|x|^{\frac{2}{\theta_4}}+|y|),\quad s\geq 0,
$$
where $Lip(\varphi)=\sup_{x\neq y}\frac{|\varphi(x)-\varphi(y)|}{|x-y|}$.

In fact, by Lemma \ref{L 3.61} and \eref{F3.5}, we have
\begin{eqnarray*}
\left|P^{t,x}_s\varphi(y)-\int_{\RR^m}\varphi(z)\mu^{t,x}(dz)\right|\leq\!\!\!\!\!\!\!\!&&\int_{\RR^m}\left|\tilde{\mathbb{E}}\varphi(Y^{t,x,y}_s)-\tilde \EE\varphi(Y^{t,x,z}_s)\right|\mu^{t,x}(dz)\\
\leq\!\!\!\!\!\!\!\!&&Lip(\varphi)\int_{\RR^m}\tilde{\mathbb{E}}\left|Y^{t,x,y}_s-Y^{t,x,z}_s\right|\mu^{t,x}(dz)\\
\leq\!\!\!\!\!\!\!\!&&Lip(\varphi)\int_{\RR^m}e^{-\frac{\beta s}{2} }|y-z|\mu^{t,x}(dz)\\
\leq\!\!\!\!\!\!\!\!&&C_T Lip(\varphi)e^{-\frac{\beta s}{2} }(1+|x|^{\frac{2}{\theta_4}}+|y|).
\end{eqnarray*}
Hence the proof is complete.
\end{proof}

\begin{proposition}\label{Rem 4.1}  Suppose that \textbf{$({\bf A}_{2\theta_2})$} holds. Then for any $T>0$, there exists $C_T>0$ such that any $x\in\RR^n, y\in \RR^m$, $t\in [0, T]$ and $s\geq 0$ ,
\begin{eqnarray}
\left|\tilde \EE b(t, x, Y^{t,x,y}_s)-\int_{\RR^m}b(t,x,z)\mu^{t,x}(dz)\right|\leq\!\!\!\!\!\!\!\!&& C_T e^{-\frac{\beta s}{2}}\left[(K_t(1))^2+1+|x|^{\theta}+|y|^{\theta_2+1}\right], \label{ergodicity}
\end{eqnarray}
where $\theta=\max\{\frac{2\theta_2+2}{\theta_4}, \frac{\theta_3\theta_4+2}{\theta_4}, \frac{\theta_3(\theta_2+1)}{\theta_2}\}$.
\end{proposition}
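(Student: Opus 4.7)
The plan is to combine the invariance of $\mu^{t,x}$, the local Lipschitz estimate in part (ii) of $\textbf{$({\bf H}_1)$}$, the exponential contraction given by Lemma \ref{L 3.61}, and the moment bounds provided by Lemma \ref{L 3.6} and Proposition \ref{invariant}. Since $\mu^{t,x}$ is invariant under $\{P^{t,x}_s\}_{s\geq 0}$, for each $s\geq 0$ we have
\begin{align*}
\int_{\RR^m} b(t,x,z)\mu^{t,x}(dz)=\int_{\RR^m}\tilde{\mathbb{E}}\,b(t,x,Y^{t,x,z}_s)\mu^{t,x}(dz),
\end{align*}
so the quantity to estimate equals $\left|\int_{\RR^m}\tilde{\mathbb{E}}\bigl[b(t,x,Y^{t,x,y}_s)-b(t,x,Y^{t,x,z}_s)\bigr]\mu^{t,x}(dz)\right|$.

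Next, applying part (ii) of $\textbf{$({\bf H}_1)$}$ pointwise and then Cauchy--Schwarz in $\tilde{\mathbb{E}}$, the integrand is dominated by the product of $\bigl(\tilde{\mathbb{E}}|Y^{t,x,y}_s-Y^{t,x,z}_s|^2\bigr)^{1/2}$ and $\bigl(\tilde{\mathbb{E}}[|Y^{t,x,y}_s|^{\theta_2}+|Y^{t,x,z}_s|^{\theta_2}+K_t(1)+|x|^{\theta_3}]^2\bigr)^{1/2}$. Lemma \ref{L 3.61} bounds the first factor by $e^{-\beta s/2}|y-z|$, while Lemma \ref{L 3.6} with $k=2\theta_2$ (permissible since $\theta_2\geq 1$ and $\textbf{$({\bf A}_{2\theta_2})$}$ is assumed) bounds the second by $C_T[|y|^{\theta_2}+|z|^{\theta_2}+K_t(1)+|x|^{\theta_3}+|x|^{2\theta_2/\theta_4}+1]$. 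I then split $|y-z|\leq|y|+|z|$ and integrate in $z$, invoking $\int_{\RR^m}|z|^k\mu^{t,x}(dz)\leq C_T(1+|x|^{2k/\theta_4})$ from Proposition \ref{invariant} for $k=\theta_2,\theta_2+1$ (and Jensen for $k=1$), all of which lie in $[1,2\theta_2]$.

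The remaining monomials in $|y|$ and $|x|$ are collapsed to the target form $(K_t(1))^2+1+|x|^\theta+|y|^{\theta_2+1}$ by Young's inequality: $|y|\cdot|x|^{\theta_3}$ with exponents $p=\theta_2+1$, $q=(\theta_2+1)/\theta_2$ gives $|y|^{\theta_2+1}+|x|^{\theta_3(\theta_2+1)/\theta_2}$; mixed products such as $|y|K_t(1)$ split via $(2,2)$ with the $|y|^2$ piece absorbed by $|y|^2\leq 1+|y|^{\theta_2+1}$ (which uses $\theta_2\geq 1$). The three components of $\theta$ correspond respectively to $\int|z|^{\theta_2+1}\mu^{t,x}$ (giving $2(\theta_2+1)/\theta_4$), the cross term $|x|^{\theta_3}\int|z|\mu^{t,x}$ (giving $\theta_3+2/\theta_4$), and the Young splitting of $|y|\cdot|x|^{\theta_3}$ (giving $\theta_3(\theta_2+1)/\theta_2$). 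The main obstacle is precisely this combinatorial bookkeeping: one must verify that every monomial arising in the expansion is dominated by one of these three exponents of $|x|$, and that no step requires a moment of $\mu^{t,x}$ of order strictly higher than $2\theta_2$, so that the hypothesis $\textbf{$({\bf A}_{2\theta_2})$}$ indeed suffices.
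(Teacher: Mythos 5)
Your proposal is correct and follows essentially the same route as the paper: invariance of $\mu^{t,x}$ to rewrite the averaged term, the local Lipschitz bound from \textbf{$({\bf H}_1)$}(ii), Cauchy--Schwarz splitting the contraction factor (Lemma \ref{L 3.61}) from the moment factor (Lemma \ref{L 3.6} with $k=2\theta_2$), integration in $z$ via Proposition \ref{invariant}, and a final Young's-inequality collapse. Your accounting of where the three exponents in $\theta$ arise matches the paper's computation.
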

\begin{proof}
By Lemma \ref{L 3.6} and \ref{L 3.61} and Proposition \ref{invariant},   for any $s\geq 0$ we have
\begin{eqnarray*}
&&\left|\tilde \EE b(t, x, Y^{t,x,y}_s)-\int_{\RR^m}b(t, x,z)\mu^{t,x}(dz)\right|\\
=\!\!\!\!\!\!\!\!&& \left|\int_{\RR^m}\tilde \EE b(t, x, Y^{t,x,y}_s)-\tilde \EE b(t, x, Y^{t,x,z}_s)\mu^{t,x}(dz)\right|\\
\leq\!\!\!\!\!\!\!\!&& C_T\int_{\RR^m}\tilde \EE\left[\left| Y^{t,x,y}_s-Y^{t,x,z}_s\right|(|Y^{t,x,y}_s|^{\theta_2}+|Y^{t,x,z}_s|^{\theta_2}+|x|^{\theta_3}+K_t(1))\right]\mu^{t,x}(dz)\\
\leq\!\!\!\!\!\!\!\!&& C\int_{\RR^m}\left[\tilde \EE\left(\left| Y^{t,x,y}_s-Y^{t,x,z}_s\right|^2\right)\right]^{1/2}\left[\tilde \EE\left(|Y^{t,x,y}_s|^{2\theta_2}+|Y^{t,x,z}_s|^{2\theta_2}+|x|^{2\theta_3}+[K_t(1)]^2\right)\right]^{1/2}\mu^{t,x}(dz)\\
\leq\!\!\!\!\!\!\!\!&&Ce^{-\frac{\beta s}{2}}\int_{\RR^m}|z-y|\left[|z|^{\theta_2}+|y|^{\theta_2}+|x|^{\frac{2\theta_2}{\theta_4}}+|x|^{\theta_3}+K_t(1)+1\right]\mu^{t,x}(dz)\\
\leq\!\!\!\!\!\!\!\!&&C_T e^{-\frac{\beta s}{2}}\left[K_t(1)(|x|^{2/ \theta_4}+|y|+1)+|x|^{\theta}+|y|^{\theta_2+1}\right]\\
\leq\!\!\!\!\!\!\!\!&&C_T e^{-\frac{\beta s}{2}}\left[(K_t(1))^2+1+|x|^{\theta}+|y|^{\theta_2+1}\right],
\end{eqnarray*}
where $\theta=\max\{\frac{2\theta_2+2}{\theta_4}, \frac{\theta_3\theta_4+2}{\theta_4}, \frac{\theta_3(\theta_2+1)}{\theta_2}\}$. The proof is complete.
\end{proof}

\begin{lemma} \label{L3.7} Suppose that \textbf{$({\bf A}_{2\alpha_2})$} holds.  Then for any $T>0$,  there exists a constant $C_T>0$ such that for all $x_1,x_2\in\RR^n$, $y\in\RR^m$, $t_1,t_2\in [0, T]$ and $s\geq 0$,
\begin{eqnarray*}
\tilde{\mathbb{E}}|Y^{t_1,x_1,y}_s-Y^{t_2,x_2,y}_s|^2\leq C_T(1+|x_1|^{2\alpha_1}+|x_2|^{\max{\{\frac{4\alpha_2}{\theta_4}, 2\alpha_1\}}}+|y|^{2\alpha_2})\left(|x_1-x_2|^2+|t_1-t_2|^{2\gamma_2}\right).
\end{eqnarray*}
\end{lemma}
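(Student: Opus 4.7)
The plan is to apply It\^o's formula to $|Y^{t_1,x_1,y}_s-Y^{t_2,x_2,y}_s|^2$ and reduce to a Gronwall-type differential inequality whose coefficient is controlled via the moment bounds from Lemma \ref{L 3.6}. Writing $\Delta_s := Y^{t_1,x_1,y}_s - Y^{t_2,x_2,y}_s$, the stochastic integral vanishes in expectation and we obtain
\begin{eqnarray*}
\frac{d}{ds}\tilde{\mathbb{E}}|\Delta_s|^2
&=& 2\tilde{\mathbb{E}}\langle f(t_1,x_1,Y^{t_1,x_1,y}_s)-f(t_2,x_2,Y^{t_2,x_2,y}_s),\Delta_s\rangle \\
&& +\, \tilde{\mathbb{E}}\|g(t_1,x_1,Y^{t_1,x_1,y}_s)-g(t_2,x_2,Y^{t_2,x_2,y}_s)\|^2.
\end{eqnarray*}

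The key idea is to split each difference through the intermediate point $(t_1,x_1,Y^{t_2,x_2,y}_s)$. The ``same parameter'' part
\[
2\langle f(t_1,x_1,Y^{t_1,x_1,y}_s)-f(t_1,x_1,Y^{t_2,x_2,y}_s),\Delta_s\rangle + \|g(t_1,x_1,Y^{t_1,x_1,y}_s)-g(t_1,x_1,Y^{t_2,x_2,y}_s)\|^2
\]
is bounded by $-\beta|\Delta_s|^2$ using the strict monotonicity \eref{sm} in $({\bf H}_2)(\text{i})$. For the ``parameter-change'' remainder, the local continuity bounds in $({\bf H}_2)(\text{ii})$ yield
\begin{eqnarray*}
&&|f(t_1,x_1,Y^{t_2,x_2,y}_s)-f(t_2,x_2,Y^{t_2,x_2,y}_s)| \\
&&\qquad\leq C_T(|t_1-t_2|^{\gamma_2}+|x_1-x_2|)(1+|x_1|^{\alpha_1}+|x_2|^{\alpha_1}+|Y^{t_2,x_2,y}_s|^{\alpha_2}),
\end{eqnarray*}
and the analogous (simpler) Lipschitz bound for $g$. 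I would then apply Young's inequality to the mixed term $2\langle \cdot,\Delta_s\rangle$ and to the cross term in the expansion of $\|g-g\|^2$ with parameter $\beta/2$, so that the resulting $|\Delta_s|^2$ contribution can be absorbed into the monotonicity gain, leaving an inhomogeneity proportional to
\[
\big(|t_1-t_2|^{2\gamma_2}+|x_1-x_2|^2\big)\,\tilde{\mathbb{E}}\!\left[1+|x_1|^{2\alpha_1}+|x_2|^{2\alpha_1}+|Y^{t_2,x_2,y}_s|^{2\alpha_2}\right].
\]

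At this point I would invoke Lemma \ref{L 3.6} with $k=2\alpha_2$, which requires exactly the hypothesis $({\bf A}_{2\alpha_2})$ assumed in the lemma, to obtain $\tilde{\mathbb{E}}|Y^{t_2,x_2,y}_s|^{2\alpha_2}\leq |y|^{2\alpha_2}+C_{T}(1+|x_2|^{4\alpha_2/\theta_4})$. This is precisely where the exponent $\max\{4\alpha_2/\theta_4,2\alpha_1\}$ on $|x_2|$ in the statement originates. Combining everything, $u(s):=\tilde{\mathbb{E}}|\Delta_s|^2$ satisfies $u'(s)\leq -\tfrac{\beta}{2}u(s)+C_T\,G(x_1,x_2,y)\,(|t_1-t_2|^{2\gamma_2}+|x_1-x_2|^2)$ with $G$ having the claimed polynomial form and $u(0)=0$, so the comparison theorem (as used in the proofs of Lemmas \ref{L 3.6} and \ref{L 3.61}) gives the desired estimate uniformly in $s\geq 0$.

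The main obstacle is purely bookkeeping: getting the exponents of $|x_1|$, $|x_2|$ and $|y|$ exactly as claimed. One must carefully distribute the $x_i$'s between the two factors in $|x_i|^{\alpha_1}|\Delta_s|\leq \varepsilon |\Delta_s|^2+\varepsilon^{-1}|x_i|^{2\alpha_1}$ (so $x_1$ and $x_2$ symmetrically contribute $|x_i|^{2\alpha_1}$), and recognize that $|Y^{t_2,x_2,y}_s|^{2\alpha_2}$, after taking expectation via Lemma \ref{L 3.6}, only involves $|x_2|^{4\alpha_2/\theta_4}$ (not $|x_1|$), giving the asymmetric $|x_2|$-exponent in the final bound. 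No other subtlety arises because the stopping-time machinery used earlier is unnecessary here — the frozen equation has globally ``nice'' coefficients via $({\bf H}_2)$ and $({\bf A}_k)$.
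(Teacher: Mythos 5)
Your proposal is correct and follows essentially the same route as the paper's proof: It\^o's formula for $\tilde{\mathbb{E}}|\Delta_s|^2$, splitting through the intermediate point $(t_1,x_1,Y^{t_2,x_2,y}_s)$ so that the strict monotonicity \eref{sm} absorbs the same-parameter part, Young's inequality on the cross terms, Lemma \ref{L 3.6} with $k=2\alpha_2$ to control $\tilde{\mathbb{E}}|Y^{t_2,x_2,y}_s|^{2\alpha_2}$, and the comparison theorem. Your accounting of where the exponent $\max\{4\alpha_2/\theta_4,\,2\alpha_1\}$ on $|x_2|$ arises also matches the paper exactly.
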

\begin{proof}
Note that
\begin{eqnarray*}
Y^{t_1,x_1,y}_s-Y^{t_2,x_2,y}_s=\!\!\!\!\!\!\!\!&&\int^s_0 f(t_1,x_1, Y^{t_1,x_1,y}_r)-f(t_2,x_2, Y^{t_2,x_2,y}_r)dr\\
&&+\int^s_0 g(t_1,x_1, Y^{t_1,x_1,y}_r)-g(t_2,x_2, Y^{t_2,x_2,y}_r)d \tilde {W}^2_r.
\end{eqnarray*}
By It\^{o}'s formula we have
\begin{eqnarray*}
&&\tilde{\mathbb{E}}|Y^{t_1,x_1,y}_s-Y^{t_2,x_2,y}_s|^2\\
=\!\!\!\!\!\!\!\!&&\int^s_0\tilde{\mathbb{E}}\left[2\langle f(t_1,x_1, Y^{t_1,x_1,y}_r)-f(t_2,x_2, Y^{t_2,x_2,y}_r), Y^{t_1,x_1,y}_r-Y^{t_2,x_2,y}_r\rangle\right.\\
&&\left.+\|g(t_1,x_1, Y^{t_1,x_1,y}_r)-g(t_2,x_2, Y^{t_2,x_2,y}_r)\|^2\right]dr\\
=\!\!\!\!\!\!\!\!&&\int^s_0\tilde{\mathbb{E}}\left[2\left\langle f(t_1,x_1, Y^{t_1,x_1,y}_r)-f(t_1, x_1, Y^{t_2,x_2,y}_r), Y^{t_1,x_1,y}_r-Y^{t_2,x_2,y}_r\right\rangle\right.\\
&&\left.+\left\|g(t_1,x_1, Y^{t_1,x_1,y}_r)-g(t_1, x_1, Y^{t_2,x_2,y}_r)\right\|^2\right]dr\\
&&+\int^s_0\tilde{\mathbb{E}}\left[2\left\langle f(t_1,x_1, Y^{t_2,x_2,y}_r)-f(t_2,x_2, Y^{t_2,x_2,y}_r), Y^{t_1, x_1,y}_r-Y^{t_2, x_2,y}_r\right\rangle\right]dr\\
&&+\int^s_0\tilde{\mathbb{E}}\left\|g(t_1, x_1, Y^{t_2, x_2,y}_r)-g(t_2, x_2, Y^{t_2, x_2,y}_r)\right\|^2 dr\\
&&+\int^s_0\tilde{\mathbb{E}}\left[2\left\langle g(t_1, x_1, Y^{t_1,x_1,y}_r)-g(t_1, x_1, Y^{t_2, x_2,y}_r), g(t_1,x_1, Y^{t_2, x_2,y}_r)-g(t_2,x_2, Y^{t_2,x_2,y}_r)\right\rangle\right]dr.
\end{eqnarray*}
Then by Young's inequality and \eref{sm}, there exists $\beta>0$ such that
\begin{eqnarray*}
&&\frac{d}{ds}\tilde{\mathbb{E}}|Y^{t_1,x_1,y}_s-Y^{t_2,x_2,y}_s|^2\\
\leq\!\!\!\!\!\!\!\!&& -\beta\tilde{\mathbb{E}}\left|Y^{t_1,x_1,y}_s-Y^{t_2,x_2,y}_s\right|^2+C_T(|x_1-x_2|^2+|t_1-t_2|^{2\gamma_2})\\
&&+C_T\tilde{\mathbb{E}}\left[\left|Y^{t_1,x_1,y}_s-Y^{t_2,x_2,y}_s\right|\left(|x_1-x_2|+|t_1-t_2|^{\gamma_2}\right)\right]\\
&&+C_T\tilde{\mathbb{E}}\left[(1+|Y^{t_2,x_2,y}_s|^{\alpha_2}+|x_1|^{\alpha_1}+|x_2|^{\alpha_1})\left|Y^{t_1,x_1,y}_s-Y^{t_2,x_2,y}_s\right|\right](|x_1-x_2|+|t_1-t_2|^{\gamma_2})\\
\leq\!\!\!\!\!\!\!\!&& -\frac{\beta}{2}\tilde{\mathbb{E}}\left|Y^{t_1,x_1,y}_s-Y^{t_2,x_2,y}_s\right|^2+C_T\tilde{\mathbb{E}}\left(1+|Y^{t_2,x_2,y}_s|^{2\alpha_2}+|x_1|^{2\alpha_1}+|x_2|^{2\alpha_1}\right)\left(|x_1-x_2|^2+|t_1-t_2|^{2\gamma_2}\right)\\
\leq\!\!\!\!\!\!\!\!&& -\frac{\beta}{2}\tilde{\mathbb{E}}\left|Y^{t_1,x_1,y}_s-Y^{t_2,x_2,y}_s\right|^2\!\!+\!C_T(1+|x_1|^{2\alpha_1}+|x_2|^{\max{\{\frac{4\alpha_2}{\theta_4}, 2\alpha_1\}}}+|y|^{2\alpha_2})\left(|x_1-x_2|^2+|t_1-t_2|^{2\gamma_2}\right).
\end{eqnarray*}
Hence, the comparison theorem yields that
\begin{eqnarray*}
\tilde{\mathbb{E}}|Y^{x_1,y}_t-Y^{x_2,y}_t|^2\leq C_T(1+|x_1|^{2\alpha_1}+|x_2|^{\max{\{\frac{4\alpha_2}{\theta_4}, 2\alpha_1\}}}+|y|^{2\alpha_2})\left(|x_1-x_2|^2+|t_1-t_2|^{2\gamma_2}\right).
\end{eqnarray*}
The proof is complete.
\end{proof}

\vskip 0.2cm
\subsection{The averaged equation}
Now we introduce the following averaged equation
\begin{equation}\left\{\begin{array}{l}
\displaystyle d\bar{X}_{t}=\bar{b}(t, \bar{X}_{t})dt+\sigma(t, \bar{X}_{t})dW^{1}_t,\\
\bar{X}_{0}=x\in \RR^n. \end{array}\right. \label{3.1}
\end{equation}
Here
\begin{eqnarray*}
\bar{b}(t,x)=\int_{\RR^m}b(t,x,y)\mu^{t,x}(dy),
\end{eqnarray*}
where $\mu^{t,x}$ is the unique invariant measure for Eq.(\ref{FEQ}).

\vspace{0.3cm}
The following lemma gives the  existence, uniqueness and uniformly estimates of solutions for Eq. \eref{3.1}. The proof will be presented in the Appendix.

\begin{lemma} \label{PMA} Suppose that \textbf{$({\bf A}_{\tilde \theta})$} holds with $\tilde \theta=\max\{2\theta_2, \theta_1, \theta_4, 2\alpha_2\}$. Then Eq.(\ref{3.1}) has a unique solution. Furthermore, for any $x\in\RR^n$, $p\geq2$ and $T>0$,
\begin{eqnarray}
\mathbb{E}\left(\sup_{t\in [0, T]}|\bar{X}_{t}|^{p}\right)\leq C_{T,p}(1+|x|^{p}),\label{3.9}
\end{eqnarray}
where  $C_{T,p}$ is some positive constant.
\end{lemma}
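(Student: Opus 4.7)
The plan is to verify that the averaged drift $\bar{b}$ inherits from $b$ the local monotonicity, coercivity, and polynomial growth properties needed to invoke the classical Krylov-type existence and uniqueness theorem for monotone SDEs with random time-dependent coefficients from \cite[Chapter 3]{LR}, and then to deduce \eqref{3.9} by the same It\^o-with-exponential-weight computation used in Lemma \ref{PMY}(i). The choice $\tilde\theta=\max\{2\theta_2,\theta_1,\theta_4,2\alpha_2\}$ is precisely what makes every moment of $\mu^{t,x}$ and of the frozen process $Y_s^{t,x,y}$ that appears in the verification finite, via Lemma \ref{L 3.6} and Proposition \ref{invariant}.

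First I would extract the coercivity and growth of $\bar{b}$. Integrating the inner-product bound $2\langle x,b(t,x,y)\rangle\leq K_t(1)(1+|x|^2)+\lambda_1|y|^{\theta_4}$ from $(\mathbf{H}_1)(iii)$ against $\mu^{t,x}$ and using $\int|y|^{\theta_4}\mu^{t,x}(dy)\leq C_T(1+|x|^2)$ from Proposition \ref{invariant} (which needs $\tilde\theta\geq\theta_4$) yields
\[
2\langle x,\bar{b}(t,x)\rangle\leq K_t(1)(1+|x|^2)+\lambda_1 C_T(1+|x|^2);
\]
a similar integration of the polynomial bound on $|b(t,x,y)|$ in $(\mathbf{H}_1)(iii)$ gives a polynomial growth bound on $|\bar{b}(t,x)|$, while the bound on $\|\sigma(t,x)\|^2$ is unchanged since $\sigma$ is the same in \eqref{Equation} and \eqref{3.1}. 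Progressive measurability of $\bar{b}(t,x)$ follows from Fubini's theorem since $\mu^{t,x}$ is deterministic and $b(t,x,y,\cdot)$ is $\mathscr{F}_t$-measurable.

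The main obstacle is establishing the local monotonicity
\[
2\langle x_1-x_2,\bar{b}(t,x_1)-\bar{b}(t,x_2)\rangle+\|\sigma(t,x_1)-\sigma(t,x_2)\|^2\leq \tilde K_t(R)|x_1-x_2|^2\quad\text{on }\{|x_i|\leq R\},
\]
which is delicate because the invariant measure $\mu^{t,x}$ itself depends on $x$. I would split
\[
\bar{b}(t,x_1)-\bar{b}(t,x_2)=\!\int\![b(t,x_1,y)-b(t,x_2,y)]\mu^{t,x_1}(dy)+\!\int\! b(t,x_2,y)[\mu^{t,x_1}-\mu^{t,x_2}](dy).
\]
The first piece, combined with $\|\sigma(t,x_1)-\sigma(t,x_2)\|^2$, is controlled by integrating the inequality of $(\mathbf{H}_1)(i)$ against $\mu^{t,x_1}$ and applying Proposition \ref{invariant} with $k=\theta_1$ to bound $\int|y|^{\theta_1}\mu^{t,x_1}(dy)$. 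For the second, cross-measure piece, I would use the ergodic representation $\int h(y)\mu^{t,x}(dy)=\lim_{s\to\infty}\tilde{\EE}h(Y_s^{t,x,0})$ (justified for suitable $h$ in the spirit of Proposition \ref{Rem 4.1}) to rewrite it as $\lim_{s\to\infty}\tilde{\EE}[b(t,x_2,Y_s^{t,x_1,0})-b(t,x_2,Y_s^{t,x_2,0})]$, then apply the local Lipschitz-in-$y$ bound of $(\mathbf{H}_1)(ii)$, Cauchy--Schwarz, and the $s$-uniform estimate of $\tilde{\EE}|Y_s^{t,x_1,0}-Y_s^{t,x_2,0}|^2$ from Lemma \ref{L3.7} (whence the exponents $2\theta_2$ and $2\alpha_2$ entering $\tilde\theta$). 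The resulting bound is of the form $|\int b(t,x_2,y)[\mu^{t,x_1}-\mu^{t,x_2}](dy)|\leq C_{T,R}|x_1-x_2|$ on $\{|x_i|\leq R\}$, which combined with the first piece gives the desired local monotonicity.

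With coercivity, polynomial growth, and local monotonicity of $\bar{b}$ in hand (and the usual bounds on $\sigma$), existence and pathwise uniqueness of a strong solution $\bar{X}\in C([0,T];\RR^n)$ to \eqref{3.1} follow from the classical Krylov-type theorem in \cite[Chapter 3]{LR}. For the moment bound \eqref{3.9}, I would apply It\^o's formula to $e^{-\frac{p}{2}\alpha_t(1)}|\bar{X}_t|^p$, use the coercivity just established together with $\|\sigma(t,x)\|^2\leq K_t(1)+C|x|^2$, then conclude by the Burkholder--Davis--Gundy and Young inequalities and Gronwall's lemma exactly as in the proof of Lemma \ref{PMY}(i); the exponential weight is removed by H\"older's inequality using $\mathbb{E}\,e^{p\alpha_T(1)}<\infty$ from $(\mathbf{H}_1)(i)$.
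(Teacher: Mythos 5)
Your proposal is correct and follows essentially the same route as the paper's Appendix proof: verify the local monotonicity and coercivity conditions for $\bar{b}$ via the ergodic approximation of $\mu^{t,x}$-averages by $\tilde{\EE}\,b(t,x,Y_s^{t,x,0})$ (Proposition \ref{Rem 4.1}), control the $x$-dependence of the frozen process by Lemma \ref{L3.7} and the moments by Proposition \ref{invariant}, then invoke the Krylov-type theorem of \cite[Chapter 3]{LR} and repeat the weighted It\^{o}/BDG/Gronwall argument of Lemma \ref{PMY}(i). The only cosmetic difference is that you split off a cross-measure term $\int b(t,x_2,y)[\mu^{t,x_1}-\mu^{t,x_2}](dy)$ and apply the ergodic limit only there, whereas the paper inserts the finite-$s$ approximation for both $\bar{b}(t,x_1)$ and $\bar{b}(t,x_2)$ before letting $s\to\infty$; the estimates used are identical.
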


\vskip 0.2cm
\subsection{The Proof of the main result} In this part, we intend to give a complete proof for our main result, $i.e.$
the slow component process $X_{t}^{\ep}$ converges strongly to the solution $\bar{X}_{t}$ of the averaged equation.
We first estimate the error between the auxiliary process $\hat{X}_{t}^{\ep}$ and the solution $\bar{X}_{t}$ of  the averaged equation  before a stopping time.

\begin{lemma} \label{ESX} Assume either \textbf{$({\bf H}_1)$} with $\lambda_1=0$ and \textbf{$({\bf A}_{\tilde \theta_1})$} hold or \textbf{$({\bf H}_1)$} with $\lambda_1>0$ and \textbf{$({\bf A}_{k})$} with some $k>\tilde \theta_2$ hold, where $\tilde \theta_1=\max\{\theta_1, 2\theta_2+2, 2\theta_6, 4\alpha_2\}$ and $\tilde \theta_2=\max\{\theta_1, 2\theta_2+2, 2\theta_6, \theta_5\theta_4, 4\alpha_2, 2\alpha_1\theta_4\}$. Then for any $T>0$, $R\geq R_0$ and $M>0$, there exists a constant $C_{T,R,M,x,y}>0$ such that
\begin{eqnarray*}
\mathbb{E}\left(\sup_{t\in [0, T\wedge\tilde{\tau}^{\ep}_{R,M}]}|\hat{X}_{t}^{\ep}-\bar{X}_{t}|^2\right)\leq C_{T,R,M,x,y}\left(\frac{\ep}{\delta}+\delta^{\gamma}\right),
\end{eqnarray*}
where $\tilde{\tau}^{\ep}_{R,M}:=\inf\{t\geq 0: |\bar{X}_t|\geq R\}\wedge \tau^{\ep}_{R,M}$ and $\gamma=\min\{2\gamma_1, \gamma_2, 1/2\}$.
\end{lemma}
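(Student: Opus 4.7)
The idea is to combine It\^o's formula, the BDG inequality, Gronwall's lemma, and Khasminskii's time-discretization trick. Concretely, I would start by writing
\[
\hat X^{\ep}_t - \bar X_t = \int_0^t \Delta b_s\, ds + \int_0^t [\sigma(s, X^{\ep}_s) - \sigma(s, \bar X_s)]\, dW^{1}_s, \quad \Delta b_s := b(s(\delta), X^{\ep}_{s(\delta)}, \hat Y^{\ep}_s) - \bar b(s, \bar X_s),
\]
apply It\^o's formula to $|\hat X^{\ep}_t - \bar X_t|^2$, and use BDG on the martingale term. On $\{t \leq T \wedge \tilde \tau^{\ep}_{R,M}\}$, both $|X^{\ep}_s|$ and $|\bar X_s|$ are bounded by $R$, the integrals $\int_0^t [K_s(1)]^2 ds$ and $\int_0^t [K_s(R)]^4 ds$ are bounded by $R$ and $M$, and $\int_0^t |\hat Y^{\ep}_s|^{4\theta_1}ds$, $\int_0^t |Y^{\ep}_s|^{2\theta_2}ds$ are controlled by $R$ from the definition of $\tau^{\ep}_{R,M}$. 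Thus the local-Lipschitz bound in \textbf{$({\bf H}_1)$}(i) on $\sigma$ produces a term of the form $\int_0^t K_s(R)(1+|\hat Y^{\ep}_s|^{\theta_1}) |\hat X^{\ep}_s - \bar X_s|^2\, ds$ that is amenable to Gronwall (after applying H\"older so that $\EE K_s(R)^2(1+|\hat Y^{\ep}_s|^{2\theta_1})$ is finite uniformly on the stopped interval).

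Next I would split the averaging error as
\[
\Delta b_s = \underbrace{[b(s(\delta), X^{\ep}_{s(\delta)}, \hat Y^{\ep}_s) - \bar b(s(\delta), X^{\ep}_{s(\delta)})]}_{\text{(I)}} + \underbrace{[\bar b(s(\delta), X^{\ep}_{s(\delta)}) - \bar b(s(\delta), \bar X_s)]}_{\text{(II)}} + \underbrace{[\bar b(s(\delta), \bar X_s) - \bar b(s, \bar X_s)]}_{\text{(III)}}.
\]
For terms (II) and (III) I need regularity of $\bar b$: a local-Lipschitz estimate in $x$ and a H\"older-in-$t$ estimate of order $\gamma_1 \wedge \gamma_2$. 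Both follow by writing the difference of $\bar b$ as an integral against $\mu^{t,x}$ and combining the local-Lipschitz/H\"older hypotheses in \textbf{$({\bf H}_1)$}(ii) with Lemma \ref{L3.7} (regularity of $Y^{t,x,y}$ in $(t,x)$) and the moment bound \eqref{F3.5}. Then (II) is decomposed via $X^{\ep}_{s(\delta)} - \bar X_s = (X^{\ep}_{s(\delta)} - X^{\ep}_s) + (X^{\ep}_s - \hat X^{\ep}_s) + (\hat X^{\ep}_s - \bar X_s)$, where the first two summands contribute $C_{T,R,M,x,y} \delta^\gamma$ by Lemmas \ref{COX} and \ref{DEX}, and the third feeds into Gronwall; (III) contributes at most $C_{T,R,M,x,y}\, \delta^{2(\gamma_1 \wedge \gamma_2)}$.

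The heart of the proof is handling (I) via Khasminskii's technique. On each interval $[k\delta,(k+1)\delta]$, conditional on $\mathscr{F}_{k\delta}$ the process $\{\hat Y^{\ep}_{k\delta+\ep u}\}_u$ has the same law as $\{Y^{k\delta,\,X^{\ep}_{k\delta},\,\hat Y^{\ep}_{k\delta}}_u\}_u$ on the auxiliary probability space. Setting
\[
\phi_k(y) := b(k\delta, X^{\ep}_{k\delta}, y) - \bar b(k\delta, X^{\ep}_{k\delta}), \quad I_k := \int_{k\delta}^{(k+1)\delta} \phi_k(\hat Y^{\ep}_s)\, ds,
\]
the substitution $s = k\delta+\ep u$ and expansion of the square give
\[
\EE\bigl[|I_k|^2 \bigm| \mathscr{F}_{k\delta}\bigr] = 2 \ep^2 \int_0^{\delta/\ep}\!\!\int_0^{u_2} \tilde \EE\bigl[\phi_k(Y_{u_1}) \cdot \tilde \EE[\phi_k(Y_{u_2}) \mid Y_{u_1}]\bigr] du_1\, du_2.
\]
Proposition \ref{Rem 4.1} bounds the inner conditional expectation by $C_T e^{-\beta(u_2-u_1)/2}$ times a polynomial in $K_{k\delta}(1)$, $|X^{\ep}_{k\delta}|$ and $|Y_{u_1}|$, which are all controlled on $\{k\delta \leq \tilde\tau^{\ep}_{R,M}\}$ thanks to the moment estimates \eqref{F3.4} and the stopping-time cap. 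Integrating the exponential yields $\EE[|I_k|^2 \mid \mathscr{F}_{k\delta}] \leq C_{T,R,M,x,y}\,\ep \delta$, and then a Cauchy--Schwarz sum over the $N \sim T/\delta$ intervals together with a similar bound for the terminal partial interval gives
\[
\EE\Bigl[\sup_{t \leq T \wedge \tilde \tau^{\ep}_{R,M}} \Bigl|\int_0^t \text{(I)}_s\, ds\Bigr|^2\Bigr] \leq N \sum_{k} \EE |I_k|^2 \leq C_{T,R,M,x,y}\, \tfrac{\ep}{\delta}.
\]
Combining everything and applying Gronwall's inequality then delivers the claimed bound. The main obstacle is the Khasminskii step: the non-autonomous, $\omega$-dependent data $(k\delta, X^{\ep}_{k\delta})$ and random initial condition $\hat Y^{\ep}_{k\delta}$ force one to work interval-by-interval and to use the stopping time $\tilde\tau^{\ep}_{R,M}$ to uniformly control the polynomial moments appearing in Proposition \ref{Rem 4.1}; once this is done, the remaining estimates are of routine local-Lipschitz/Gronwall type.
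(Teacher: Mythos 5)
Your proposal is correct and follows essentially the same route as the paper's proof: the same telescoping decomposition of the drift error, the same use of Lemmas \ref{COX}, \ref{DEX}, \ref{L3.7} and Proposition \ref{Rem 4.1} to get local Lipschitz/H\"older regularity of $\bar b$ and the exponential mixing bound, the same blockwise Khasminskii argument with conditioning on $\mathscr{F}_{k\delta}$ and identification in law with the frozen equation, and a final Gronwall step. The only cosmetic differences (It\^o's formula on $|\hat X^{\ep}_t-\bar X_t|^2$ instead of squaring the integral representation directly, and a slightly different grouping of the intermediate $\bar b$ terms) do not change the argument.
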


\begin{proof}
Recall that
\begin{eqnarray*}
\hat{X}_{t}^{\ep}-\bar{X}_{t}&=&\int_{0}^{t}\left[b(s(\delta), X_{s(\delta)}^{\ep},\hat{Y}_{s}^{\ep})-\bar{b}(s,\bar{X}_{s})\right]ds
+\int_{0}^{t}\left[\sigma(s, X^{\ep}_{s})-\sigma(s, \bar{X}_{s})\right]dW^{1}_s\\
&=&\int_{0}^{t}\left[b(s(\delta), X_{s(\delta)}^{\ep},\hat{Y}_{s}^{\ep})-\bar{b}(s(\delta), X^{\ep}_{s(\delta)})\right]ds+\int_{0}^{t}\left[\bar{b}(s(\delta), X^{\ep}_{s(\delta)})-\bar{b}(s, X^{\ep}_{s(\delta)})\right]ds\\
&&+\int_{0}^{t}\left[\bar{b}(s, X^{\ep}_{s(\delta)})-\bar{b}(s, X^{\ep}_{s})\right]ds+\int_{0}^{t}\left[\bar{b}(s, X_{s}^{\ep})-\bar{b}(s, \bar{X}_s)\right]ds\\
&&+\int_{0}^{t}\left[\sigma(s, X^{\ep}_{s})-\sigma(s, \bar{X}_{s})\right]dW^{1}_s.
\end{eqnarray*}
Then it is easy to see that
\begin{eqnarray}
&&\EE\left(\sup_{t\in[0,T\wedge \tilde{\tau}^{\ep}_{R,M}]}|\hat{X}_{t}^{\ep}-\bar{X}_{t}|^2\right)\nonumber\\
\leq\!\!\!\!\!\!\!\!&&C\EE\left[\sup_{t\in[0,T\wedge \tilde{\tau}^{\ep}_{R,M}]}\left|\int_{0}^{t}b(s(\delta), X_{s(\delta)}^{\ep},\hat{Y}_{s}^{\ep})-\bar{b}(s(\delta),X^{\ep}_{s(\delta)})ds\right|^2\right]\nonumber\\
&&+\EE\left[\int_{0}^{T\wedge \tilde{\tau}^{\ep}_{R,M}}\left|\bar{b}(s(\delta), X^{\ep}_{s(\delta)})-\bar{b}(s, X^{\ep}_{s(\delta)})\right|ds\right]^2\nonumber\\
&&+\EE\left[\int_{0}^{T\wedge \tilde{\tau}^{\ep}_{R,M}}\left|\bar{b}(s, X^{\ep}_{s(\delta)})-\bar{b}(s, X^{\ep}_{s})\right|ds\right]^2
+\EE\left[\int_{0}^{T\wedge \tilde{\tau}^{\ep}_{R,M}}\left|\bar{b}(s,X^{\ep}_{s})-\bar{b}(s,\bar{X}_{s})\right|ds\right]^2\nonumber\\
&&+C\EE\int_{0}^{T\wedge \tilde{\tau}^{\ep}_{R,M}}\|\sigma(s,X_{s}^{\ep})-\sigma(s,\bar{X}_{s})\|^2ds   \nonumber\\
&&:=\sum^5_{i=1}I_i(T).\label{I3.14}
\end{eqnarray}

For $I_2(T)$, for $t_1,t_2\in [0, T]$ and $x\in\RR^n$, we have
\begin{eqnarray*}
|\bar b(t_1,x)-\bar b(t_2,x)|=\!\!\!\!\!\!\!\!&&\left|\int_{\RR^m}b(t_1,x,z)\mu^{t_1,x}(dz)-\int_{\RR^m}b(t_2,x,z)\mu^{t_2,x}(dz)\right|\nonumber\\
=\!\!\!\!\!\!\!\!&&\left|\int_{\RR^m}b(t_1,x,z)\mu^{t_1,x}(dz)-\tilde{\EE}b(t_1,x, Y^{t_1,x,0}_s)\right|\\
&&+\left|\tilde{\EE}b(t_2,x,Y^{t_2,x,0}_s)-\int_{\RR^m}b(t_2,x,z)\mu^{t_2,x}(dz)\right|\nonumber\\
&&+\left|\tilde{\EE}b(t_1,x,Y^{t_1,x,0}_s)-\tilde{\EE}b(t_2,x,Y^{t_2,x,0}_s)\right|.
\end{eqnarray*}
Then Proposition \ref{Rem 4.1} and Lemma \ref{L3.7} imply that
\begin{eqnarray*}
|\bar b(t_1,x)-\bar b(t_2,x)|\leq\!\!\!\!\!\!\!\!&&C_T e^{-\frac{\beta s}{2}}\left[(K_{t_1}(1))^2+(K_{t_2}(1))^2+|x|^{\theta}+1\right]\\
&&+\tilde{\EE}\left[|Y^{t_1,x,0}_s-Y^{t_2,x,0}_s|(K_{t_1}(1)+|x|^{\theta_3}+|Y^{t_1,x,0}_s|^{\theta_2}+|Y^{t_2,x,0}_s|^{\theta_2})\right]\nonumber\\
&&+|t_1-t_2|^{\gamma_1}\tilde{\EE}\left(|x|^{\theta_3}+|Y^{t_2,x,0}_s|^{\theta_2}+Z_T\right)\\
\leq\!\!\!\!\!\!\!\!&&C_T e^{-\frac{\beta s}{2}}\left[(K_{t_1}(1))^2+(K_{t_2}(1))^2+|x|^{\theta}+1\right]\\
&&+\tilde{\EE}\left[|Y^{t_1,x,0}_s-Y^{t_2,x,0}_s|(K_{t_1}(1)+|x|^{\theta_3}+|Y^{t_1,x,0}_s|^{\theta_2}+|Y^{t_2,x,0}_s|^{\theta_2})\right]\nonumber\\
&&+|t_1-t_2|^{\gamma_1}\tilde{\EE}\left(|x|^{\theta_3}+|Y^{t_2,x,0}_s|^{\theta_2}+Z_T\right)\\
\leq\!\!\!\!\!\!\!\!&&C_T e^{-\frac{\beta s}{2}}\left[(K_{t_1}(1))^2+(K_{t_2}(1))^2+|x|^{\theta}+1\right]\\
&&+C_T |t_1-t_2|^{\gamma_2}\left[(1+|x|^{\frac{2\alpha_2}{\theta_4}\vee \alpha_1})(K_{t_1}(1)+|x|^{\theta_3 \vee \frac{2\theta_2}{\theta_4}})\right]\nonumber\\
&&+C_T|t_1-t_2|^{\gamma_1}\left(|x|^{\theta_3 \vee \frac{2\theta_2}{\theta_4}}+Z_T\right).
\end{eqnarray*}
Then letting $s\rightarrow \infty$, there exits $\tilde{\theta}>0$ such that
\begin{eqnarray*}
|\bar b(t_1,x)-\bar b(t_2,x)|\leq\!\!\!\!\!\!\!\!&&C_T\left[(K_{t_1}(1))^2+|x|^{\tilde{\theta}}+Z_T\right]|t_1-t_2|^{\gamma_1\wedge \gamma_2},
\end{eqnarray*}
which implies that
\begin{eqnarray}
I_2(T)\leq\!\!\!\!\!\!\!\!&&
C\delta^{2(\gamma_1\wedge \gamma_2)}\mathbb{E}\left[\int_{0}^{T\wedge\tilde{\tau}^{\ep}_{R,M}}\left((K_{s}(1))^2+|X^{\ep}_{s(\delta)}|^{\tilde \theta}+Z_T\right)ds\right]^2\nonumber\\
\leq\!\!\!\!\!\!\!\!&&C_{T,R}\delta^{2(\gamma_1\wedge \gamma_2)}.\label{I3.15}
\end{eqnarray}

For $I_3(T)$, note that for any $|x_i|\leq R$, $i=1,2$,
\begin{eqnarray*}
|\bar{b}(t,x_1)-\bar{b}(t,x_2)|\leq \bar K_t(R)|x_1-x_2|^2,
\end{eqnarray*}
where $\bar K_t(R)=C_{t,R}\left[K_t(R)+K_t(1)+1\right]$ (see \eref{4.15} below for a detailed proof). Then we have
\begin{eqnarray}
I_3(T)\leq\!\!\!\!\!\!\!\!&&
\mathbb{E}\left[\int_{0}^{T\wedge\tilde{\tau}^{\ep}_{R,M}}[\bar K_s(R)]^2ds\int^{T\wedge\tilde{\tau}^{\ep}_{R,M}}_0|X^{\ep}_{s(\delta)}-X^{\ep}_{s}|^2ds\right]\nonumber\\
\leq\!\!\!\!\!\!\!\!&&C_{T,R,M}\mathbb{E}\left[\int_{0}^{T\wedge\tilde{\tau}^{\ep}_{R,M}}\big|X^{\ep}_{s(\delta)}-X^{\ep}_{s}\big|^{2}ds\right].\label{I3.16}
\end{eqnarray}

For $I_4(T)$, we have
\begin{eqnarray}
I_4(T)\leq\!\!\!\!\!\!\!\!&&
\mathbb{E}\left[\int_{0}^{T\wedge\tilde{\tau}^{\ep}_{R,M}}[\bar K_s(R)]^2ds\int^{T\wedge\tilde{\tau}^{\ep}_{R,M}}_0|X^{\ep}_{s}-\bar X_{s}|^2ds\right]\nonumber\\
\leq\!\!\!\!\!\!\!\!&&C_{T,R,M}\mathbb{E}\left[\int_{0}^{T\wedge\tilde{\tau}^{\ep}_{R,M}}\big|X^{\ep}_{s}-\bar X_{s}\big|^{2}ds\right]\nonumber\\
\leq\!\!\!\!\!\!\!\!&&C_{T,R,M} \EE\left(\sup_{t\in[0,T\wedge \tilde{\tau}^{\ep}_{R,M}]}|X^{\ep}_t-\hat{X}^{\ep}_{t}|^2\right)+C_{T,R,M} \EE\int^{T\wedge \tilde{\tau}^{\ep}_{R,M}}_0|\hat X^{\ep}_t-\bar{X}_{t}|^2dt.\label{I3.17}
\end{eqnarray}

For $I_5(T)$, it follows that
\begin{eqnarray}
I_5(T)\leq\!\!\!\!\!\!\!\!&&
\mathbb{E}\left\{\int_{0}^{T\wedge\tilde{\tau}^{\ep}_{R,M}}[\bar K_s(R)]^2ds\left[\int^{T\wedge\tilde{\tau}^{\ep}_{R,M}}_{0}|X^{\ep}_s-\bar{X}_{s}|^4ds\right]^{1/2}\right\}\nonumber\\
\leq\!\!\!\!\!\!\!\!&&C_{T,R,M} \EE\left(\sup_{t\in[0,T\wedge \tilde{\tau}^{\ep}_{R,M}]}|X^{\ep}_t-\hat{X}^{\ep}_{t}|^2\right)+\frac{1}{2}\EE\left(\sup_{t\in[0,T\wedge \tilde{\tau}^{\ep}_{R,M}]}|\hat{X}_{t}^{\ep}-\bar{X}_{t}|^2\right)\nonumber\\
&&+C_{T,R,M} \EE\int^{T\wedge \tilde{\tau}^{\ep}_{R,M}}_0|\hat X^{\ep}_t-\bar{X}_{t}|^2dt.\label{I3.18}
\end{eqnarray}
By \eref{I3.14}-\eref{I3.18}, we get
\begin{eqnarray}
&&\EE\left(\sup_{t\in[0,T\wedge \tilde{\tau}^{\ep}_{R,M}]}|\hat{X}_{t}^{\ep}-\bar{X}_{t}|^2\right)\nonumber\\
\leq\!\!\!\!\!\!\!\!&&C_{T,R,M} \EE\left(\sup_{t\in[0,T\wedge \tilde{\tau}^{\ep}_{R,M}]}|X^{\ep}_t-\hat{X}^{\ep}_{t}|^2\right)+C_{T,R,M}\mathbb{E}\left[\int_{0}^{T\wedge\tilde{\tau}^{\ep}_{R,M}}\big|X^{\ep}_{s(\delta)}-X^{\ep}_{s}\big|^{2}ds\right]\nonumber\\
&&+C_{T,R,M}\EE \int^{T\wedge \tilde{\tau}^{\ep}_{R,M}}_0|\hat X^{\ep}_t-\bar{X}_{t}|^2dt+C_{T,R,M}\delta^{2(\gamma_1\wedge \gamma_2)}+I_1(T).\label{I3.19}
\end{eqnarray}

Next, we intend to estimate the term $I_1(T)$. Note that
\begin{eqnarray}    \label{J2}
&&\left|\int_{0}^{t}\left[b(s(\delta), X_{s(\delta)}^{\ep},\hat{Y}_{s}^{\ep})-\bar{b}(s(\delta), X^{\ep}_{s(\delta)})\right]ds\right|^2\nonumber\\
=\!\!\!\!\!\!\!\!&&\left|\sum_{k=0}^{[t/\delta]-1}
\int_{k\delta}^{(k+1)\delta}\left[b(k\delta, X_{k\delta}^{\ep},\hat{Y}_{s}^{\ep})-\bar{b}(k\delta, X_{k\delta}^{\ep})\right]ds
\!\!+\!\!\int_{t(\delta)}^{t}\left[b(t(\delta), X_{t(\delta)}^{\ep},\hat{Y}_{s}^{\ep})-\bar{b}(t(\delta), X_{t(\delta)}^{\ep})\right]ds\right|^2\nonumber\\
\leq\!\!\!\!\!\!\!\!&&2[t/\delta]\sum_{k=0}^{[t/\delta]-1}
\left|\int_{k\delta}^{(k+1)\delta}\left[b(k\delta, X_{k\delta}^{\ep},\hat{Y}_{s}^{\ep})-\bar{b}(k\delta, X_{k\delta}^{\ep})\right]ds\right|^2\nonumber\\
&&+2\left|\int_{t(\delta)}^{t}\left[b(t(\delta),X_{t(\delta)}^{\ep},\hat{Y}_{s}^{\ep})-\bar{b}(t(\delta),X_{t(\delta)}^{\ep})\right]ds\right|^2\nonumber\\
:=\!\!\!\!\!\!\!\!&&I_{11}(t)+I_{12}(t).
\end{eqnarray}
For $I_{12}(t)$, by Lemma \ref{MDY}, we easily deduce that
\begin{eqnarray}
\EE\left[\sup_{t\in [0, T\wedge\tilde{\tau}^{\ep}_{R,M}]}I_{12}(t)\right]\leq\!\!\!\!\!\!\!\!&&\delta\EE\left[\sup_{t\in [0, T\wedge\tilde{\tau}^{\ep}_{R,M}]}\int_{t(\delta)}^{t}[K_{t(\delta)}(1)]^2+|X^{\ep}_{t(\delta)}|^{2\theta_5}+|\hat{Y}_{s}^{\ep}|^{2\theta_6})ds\right]\nonumber\\
\leq\!\!\!\!\!\!\!\!&&\delta\left[\sup_{t\in[0,T]}\EE[K_{t}(1)]^2+R^{2\theta_5}+\int^T_0\EE|\hat{Y}_{s}^{\ep}|^{2\theta_6}ds\right]\nonumber\\
\leq\!\!\!\!\!\!\!\!&&C_{T,R,M}(|x|^{\frac{4\theta_6}{\theta_4}}+|y|^{2\theta_6}+1)\delta.
\end{eqnarray}
Now, we estimate the term  $I_{11}(t)$,
\begin{eqnarray*}
&&\mathbb{E}\left[\sup_{t\in[0, T\wedge\tilde{\tau}^{\ep}_{R,M}]}I_{11}(t)\right]\nonumber\\
\leq\!\!\!\!\!\!\!\!&&C[T/\delta]\mathbb{E}\sum_{k=0}^{[T/\delta]-1}
\left[\left|\int_{k\delta}^{(k+1)\delta}\left[b(k\delta, X_{k\delta}^{\ep},\hat{Y}_{s}^{\ep})-\bar{b}(k\delta,X_{k\delta}^{\ep})\right]ds\right|^{2}1_{\{k\delta\leq \tilde{\tau}^{\ep}_{R,M}\}}\right]\nonumber\\
\leq\!\!\!\!\!\!\!\!&&\frac{C_{T}}{\delta^{2}}\max_{0\leq k\leq[T/\delta]-1}\mathbb{E}\left[\left|\int_{k\delta}^{(k+1)\delta}
\left[b(k\delta, X_{k\delta}^{\ep},\hat{Y}_{s}^{\ep})-\bar{b}(k\delta, X_{k\delta}^{\ep})\right]ds\right|^{2} 1_{\{k\delta\leq \tilde{\tau}^{\ep}_{R,M}\}}\right] \nonumber\\
=\!\!\!\!\!\!\!\!&&C_{T}\frac{\ep^{2}}{\delta^{2}}\max_{0\leq k\leq[T/\delta]-1}\mathbb{E}\left[\left|\int_{0}^{\frac{\delta}{\ep}}
\left[b(k\delta, X_{k\delta}^{\ep},\hat{Y}_{s\ep+k\delta}^{\ep})-\bar{b}(k\delta, X_{k\delta}^{\ep})\right]ds\right|^{2}1_{\{k\delta\leq \tilde{\tau}^{\ep}_{R,M}\}}\right]  \nonumber\\
=\!\!\!\!\!\!\!\!&&C_{T}\frac{\ep^{2}}{\delta^{2}}\max_{0\leq k\leq[T/\delta]-1}\int_{0}^{\frac{\delta}{\ep}}
\int_{r}^{\frac{\delta}{\ep}}\Psi_{k}(s,r)dsdr,  \nonumber
\end{eqnarray*}
where for any $0\leq r\leq s\leq \frac{\delta}{\ep}$,
\begin{eqnarray*}
\Psi_{k}(s,r):=\!\!\!\!\!\!\!\!&&\mathbb{E}\left[
\langle b(k\delta, X_{k\delta}^{\ep},\hat{Y}_{s\ep+k\delta}^{\ep})-\bar{b}(k\delta, X_{k\delta}^{\ep}),
b(k\delta, X_{k\delta}^{\ep},\hat{Y}_{r\ep+k\delta}^{\ep})-\bar{b}(k\delta, X_{k\delta}^{\ep})\rangle1_{\{k\delta\leq \tilde{\tau}^{\ep}_{R,M}\}}\right].
\end{eqnarray*}
For any $\ep, s>0$, and $\mathscr{F}_s$-measurable $\RR^n$- resp. $\RR^m$-valued maps $X$ and $Y$, we consider the following equation
\begin{eqnarray*}
\tilde{Y}^{\ep,s,X,Y}_t=Y+\frac{1}{\ep}\int^t_s f(s,X,\tilde{Y}^{\ep,s,X,Y}_r)dr+\frac{1}{\sqrt{\ep}}\int^t_s g(s,X,\tilde{Y}^{\ep,s,X,Y}_r)dW^2_r,\quad t\geq s.
\end{eqnarray*}
Then by the construction of $\hat{Y}_{t}^{\ep}$,
for any $k\in \mathbb{N}_{\ast}$ and $t\in[k\delta,(k+1)\delta]$ we have
$$
\hat{Y}_{t}^{\ep}=\tilde Y^{\ep,k\delta,X_{k\delta}^{\ep},\hat{Y}_{k\delta}^{\ep}}_t,
$$
which implies that
\begin{eqnarray*}
\Psi_{k}(s,r)=\!\!\!\!\!\!\!\!&&\mathbb{E}\left[
\big\langle b\left(k\delta, X_{k\delta}^{\ep},\tilde {Y}^{\ep, k\delta, X_{k\delta}^{\ep}, \hat Y_{k\delta}^{\ep}}_{s\ep+k\delta}\right)-\bar{b}(k\delta, X_{k\delta}^{\ep}),\right.\\
&&\quad\quad\quad \left.b\left(k\delta, X_{k\delta}^{\ep},\tilde{Y}^{\ep, k\delta, X_{k\delta}^{\ep}, \hat Y_{k\delta}^{\ep}}_{r\ep+k\delta}\right)-\bar{b}(k\delta, X_{k\delta}^{\ep})\big\rangle 1_{\{k\delta\leq \tilde{\tau}^{\ep}_{R,M}\}}\right].
\end{eqnarray*}

By approximating by functions of type $(x,y)\rightarrow H_1(x)H_2(y)$, one sees that for any measurable functions $H:\RR^n\times\RR^m\rightarrow \RR^{m\times d_2}$, $\phi: \RR^m\rightarrow\RR^n$, and for any $\mathscr{F}_{s}$-measurable $\RR^n$-valued map $X$ and $\mathscr{F}_{t}$-adapted $\RR^m$-valued process $\{Z_t\}_{t\geq s}$, we have for any $t>s$,
\begin{eqnarray}
\EE\left[\phi\left(\int^t_s H(X, Z_r)dW^2_r\right)|\mathscr{F}_s\right](\omega)=\EE\left[\phi\left(\int^t_s H(X(\omega), Z_r)dW^2_r\right)|\mathscr{F}_s\right](\omega),~ \PP-a.s..\label{MP}
\end{eqnarray}
Note that for any fixed $(x,y)\in\RR^n\times\RR^m$, $X_{k\delta}^{\ep}$, $\hat Y_{k\delta}^{\ep}$ , $b(k\delta, x, y)$ and $1_{\{k\delta\leq \tilde{\tau}^{\ep}_{R,M}\}}$ are $\mathscr{F}_{k\delta}$-measurable, $\{\tilde{Y}^{\ep, k\delta, x, y}_{s\ep+k\delta}\}_{s\geq 0}$ is independent of $\mathscr{F}_{k\delta}$, and by statement \eref{MP}, we have

\begin{eqnarray}
\Psi_{k}(s,r)=\!\!\!\!\!\!\!\!&&\int_{\Omega}\mathbb{E}\left[
\big\langle b\left(k\delta, X_{k\delta}^{\ep},\tilde {Y}^{\ep, k\delta, X_{k\delta}^{\ep}, \hat Y_{k\delta}^{\ep}}_{s\ep+k\delta}\right)-\bar{b}(k\delta, X_{k\delta}^{\ep}),\right.\nonumber\\
&&\quad\quad\quad \left.b\left(k\delta, X_{k\delta}^{\ep},\tilde{Y}^{\ep, k\delta, X_{k\delta}^{\ep}, \hat Y_{k\delta}^{\ep}}_{r\ep+k\delta}\right)-\bar{b}(k\delta, X_{k\delta}^{\ep})\big\rangle 1_{\{k\delta\leq \tilde{\tau}^{\ep}_{R,M}\}}|\mathscr{F}_{k\delta}\right](\omega)\PP(d\omega)\nonumber\\
=\!\!\!\!\!\!\!\!&&\int_{\Omega}\Big[\mathbb{E}
\big\langle b\left(k\delta, X_{k\delta}^{\ep}(\omega),\tilde {Y}^{\ep, k\delta, X_{k\delta}^{\ep}(\omega), \hat Y_{k\delta}^{\ep}(\omega)}_{s\ep+k\delta}\right)-\bar{b}(k\delta, X_{k\delta}^{\ep}(\omega)),\nonumber\\
&& b\left(k\delta, X_{k\delta}^{\ep}(\omega),\tilde{Y}^{\ep, k\delta,X_{k\delta}^{\ep}(\omega), \hat Y_{k\delta}^{\ep}(\omega)}_{r\ep+k\delta}\right)-\bar{b}(k\delta, X_{k\delta}^{\ep}(\omega))\big\rangle 1_{\{k\delta\leq \tilde{\tau}^{\ep}_{R,M}\}}(\omega)\Big]\PP(d\omega).\label{3.18'}
\end{eqnarray}
For any given $x\in\RR^n, y\in\RR^m$, by the definition of process $\tilde{Y}^{\ep,s,x,y}_t$, it is easy to see
\begin{eqnarray}
\tilde{Y}^{\ep,k\delta,x,y}_{s\ep+k\delta}
=\!\!\!\!\!\!\!\!&&y+\frac{1}{\ep}\int^{s\ep}_{0} f(k\delta,x,\tilde{Y}^{\ep,k\delta,x,y}_{r+k\delta})dr+\frac{1}{\sqrt{\ep}}\int^{s\ep}_{0} g(k\delta,x,\tilde{Y}^{\ep,k\delta,x,y}_{r+k\delta})dW^{2,k\delta}_r\nonumber\\
=\!\!\!\!\!\!\!\!&&y+\int^{s}_{0} f(k\delta,x,\tilde{Y}^{\ep,k\delta,x,y}_{r\ep+k\delta})dr+\int^{s}_{0} g(k\delta,x,\tilde{Y}^{\ep,k\delta,x,y}_{r\ep+k\delta})d\hat{W}^{2,k\delta}_r,\label{E3.12.1}
\end{eqnarray}
where $\{W^{2, k\delta}_r:=W^2_{r+k\delta}-W^2_{k\delta}\}_{r\geq 0}$ and $\{\hat W^{2,k\delta}_t:=\frac{1}{\sqrt{\ep}}W^{2,k\delta}_{t\ep}\}_{t\geq 0}$. Recall the solution of the frozen equation satisfies
\begin{eqnarray}
Y_{s}^{k\delta, x, y}=\!\!\!\!\!\!\!\!&& y
+\int_{0}^{{s}}f(k\delta, x, Y_{r}^{k\delta, x,y})dr
+\int_{0}^{{s}}g(k\delta, x, Y_{r}^{k\delta, x,y})d\tilde{W}^2_r.  \label{E3.12.2}
\end{eqnarray}
The uniqueness of solutions of Eq. (\ref{E3.12.1}) and Eq. (\ref{E3.12.2}) implies
that the distribution of $(\tilde Y^{\ep, k\delta, x,y}_{s\ep+k\delta})_{0\leq s\leq \delta/\ep}$
coincides with the distribution of $(Y_{s}^{k\delta,x, y})_{0\leq s\leq \delta/\ep}$.

By a similar argument in Proposition  \ref{Rem 4.1} and condition (ii), we can obtain
\begin{eqnarray}
&&|b(k\delta, x,y)-\bar{b}(k\delta, x)|\nonumber\\
=\!\!\!\!\!\!\!\!&& \left|\int_{\RR^m}b(k\delta, x,y)-\tilde \EE b(k\delta, x, Y^{k\delta,x,z}_s)\mu^{k\delta,x}(dz)\right|\nonumber\\
\leq\!\!\!\!\!\!\!\!&& C_T\int_{\RR^m}\tilde \EE\left[\left| y-Y^{k\delta,x,z}_s\right|(|y|^{\theta_2}+|Y^{k\delta,x,z}_s|^{\theta_2}+|x|^{\theta_3}+K_{k\delta}(1))\right]\mu^{k\delta,x}(dz)\nonumber\\
\leq\!\!\!\!\!\!\!\!&& C_T\int_{\RR^m}\left[\tilde \EE\left(\left|y-Y^{k\delta,x,z}_s\right|^2\right)\right]^{1/2}\left[\tilde \EE\left(|Y^{k\delta,x,y}_s|^{2\theta_2}+|Y^{k\delta,x,z}_s|^{2\theta_2}+|x|^{2\theta_3}+[K_t(1)]^2\right)\right]^{1/2}\mu^{k\delta,x}(dz)\nonumber\\
\leq\!\!\!\!\!\!\!\!&&C_T\int_{\RR^m}(|y|+|z|+|x|^{2/\theta_4})\left[|z|^{\theta_2}+|y|^{\theta_2}+|x|^{\frac{2\theta_2}{\theta_4}}+|x|^{\theta_3}+K_t(1)+1\right]\mu^{k\delta,x}(dz)\nonumber\\
\leq\!\!\!\!\!\!\!\!&&C_T\left[(K_{k\delta}(1))^2+1+|x|^{\frac{4\theta_2}{\theta_4}\vee (2\theta_3)}+|y|^{\theta_2+1}\right].\label{3.21'}
\end{eqnarray}
Then by  \eref{3.18'}, \eref{3.21'} and Proposition \ref{Rem 4.1}, we have
\begin{eqnarray*}
\Psi_{k}(s,r)=\!\!\!\!\!\!\!\!&&\int_{\Omega}\Big[\tilde{\mathbb{E}}
\big\langle b\left(k\delta, X_{k\delta}^{\ep}(\omega),Y^{k\delta, X_{k\delta}^{\ep}(\omega), \hat Y_{k\delta}^{\ep}(\omega)}_{s}\right)-\bar{b}(k\delta, X_{k\delta}^{\ep}(\omega)),\nonumber\\
&&\quad\quad\quad b\left(k\delta, X_{k\delta}^{\ep}(\omega),Y^{k\delta, X_{k\delta}^{\ep}(\omega), \hat Y_{k\delta}^{\ep}(\omega)}_{r}\right)-\bar{b}(k\delta, X_{k\delta}^{\ep}(\omega))\big\rangle 1_{\{k\delta\leq \tilde{\tau}^{\ep}_{R,M}\}}(\omega)\Big]\PP(d\omega)\nonumber\\
=\!\!\!\!\!\!\!\!&&\int_{\Omega}\int_{\tilde{\Omega}}\big\langle\tilde{\mathbb{E}}\Big[
 b\left(k\delta, X_{k\delta}^{\ep}(\omega),Y^{k\delta, X_{k\delta}^{\ep}(\omega),Y_{r}^{k\delta,X_{k\delta}^{\ep}(\omega),\hat Y_{k\delta}^{\ep}(\omega)}(\tilde{\omega})}_{s-r}\right)-\bar{b}(k\delta, X_{k\delta}^{\ep}(\omega))\Big],\nonumber\\
&&\quad\quad\quad b\left(k\delta, X_{k\delta}^{\ep}(\omega),Y^{k\delta, X_{k\delta}^{\ep}(\omega), \hat Y_{k\delta}^{\ep}(\omega)}_{r}(\tilde{\omega})\right)-\bar{b}(k\delta, X_{k\delta}^{\ep}(\omega))\big\rangle 1_{\{k\delta\leq \tilde{\tau}^{\ep}_{R,M}\}}(\omega)\tilde{\PP}(d\tilde{\omega})\PP(d\omega)\nonumber\\
\leq\!\!\!\!\!\!\!\!&&\int_{\Omega}\int_{\tilde{\Omega}}\left[(K_{k\delta}(1))^2\!+\!1\!+\!|X_{k\delta}^{\ep}(\omega)|^{(\frac{2\theta_2+2}{\theta_4})\vee(\frac{\theta_3\theta_4+2}{\theta_4})\vee(\frac{\theta_3(\theta_2+1)}{\theta_2})}+|Y_{r}^{k\delta, X_{k\delta}^{\ep}(\omega), \hat Y_{k\delta}^{\ep}(\omega)}(\tilde{\omega})|^{\theta_2+1}\right]e^{-\frac{(s-r)\beta}{2}}\nonumber\\
&&\quad\cdot\left[(K_{k\delta}(1))^2\!+\!1\!+\!|X_{k\delta}^{\ep}(\omega)|^{\frac{4\theta_2}{\theta_4}\vee (2\theta_3)}+|Y_{r}^{k\delta,X_{k\delta}^{\ep}(\omega), \hat Y_{k\delta}^{\ep}(\omega)}(\tilde{\omega})|^{\theta_2+1}\right]1_{\{k\delta\leq \tilde{\tau}^{\ep}_{R,M}\}}(\omega)\tilde{\PP}(d\tilde{\omega})\PP(d\omega)\nonumber\\
\leq\!\!\!\!\!\!\!\!&&C_T\int_{\Omega}\left[(K_{k\delta}(1))^4+|X^{\ep}_{k\delta}(\omega)|^{\frac{8\theta_2}{\theta_4}\vee (4\theta_3)}+|\hat Y_{k\delta}^{\ep}(\omega)|^{2\theta_2+2}+1)1_{\{k\delta\leq \tilde{\tau}^{\ep}_{R,M}\}}(\omega)\right]\PP(d\omega)e^{-\frac{(s-r)\beta}{2}}\\
\leq\!\!\!\!\!\!\!\!&&C_{T,R}(|x|^{\frac{4(\theta_2+1)}{\theta_4}}+|y|^{2(\theta_2+1)}+1)e^{-\frac{(s-r)\beta}{2}},
\end{eqnarray*}
where the last inequality comes from the definition of stopping time, Lemmas \ref{PMY} and \ref{MDY}.

Hence we have
\begin{eqnarray}
\mathbb{E}\left[\sup_{t\in [0,T\wedge\tilde{\tau}^{\ep}_{R,M}]}I_1(t)\right]\leq\!\!\!\!\!\!\!\!&&C_{T,R,x,y}\frac{\ep^{2}}{\delta^{2}}
\int_{0}^{\frac{\delta}{\ep}}\int_{r}^{\frac{\delta}{\ep}}e^{-\frac{(s-r)\beta}{2}}dsdr +C_{T,R,M,x,y}\delta  \nonumber\\
=\!\!\!\!\!\!\!\!&&C_{T,R,x,y}\frac{\ep^{2}}{\delta^{2}}\Big(\frac{\delta}{\beta\ep}-\frac{1}{\beta^{2}}
+\frac{1}{\beta^2}e^{-\frac{\beta\delta}{\ep}}\Big) +C_{T,R,M,x,y}\delta  \nonumber\\
\leq\!\!\!\!\!\!\!\!&&C_{T,R,x,y}\frac{\ep}{\delta}+C_{T,R,M,x,y}\delta.\label{3.15}
\end{eqnarray}

According to estimates $(\ref{I3.19})$ and $(\ref{3.15})$, we obtain that
\begin{eqnarray*}
\mathbb{E}\left(\sup_{t\in[0,T\wedge\tilde{\tau}^{\ep}_{R,M}]}|\hat{X}^{\ep}_{t}-\bar{X}_{t}|^{2}\right)
\leq\!\!\!\!\!\!\!\!&&C_{T,R,M,x,y}\left(\frac{\ep}{\delta}+\delta^{\gamma}\right)\\
&&+C_{T,R,M}\int_{0}^{T}\EE\left(\sup_{s\in[0,t\wedge\tilde{\tau}^{\ep}_{R,M}]}
\big|\hat{X}^{\ep}_{s}-\bar{X}_{s}\big|^{2}\right)dt,
\end{eqnarray*}
where $\gamma=\min\{2\gamma_1, \gamma_2, 1/2\}$. By Gronwall's inequality, we get
\begin{eqnarray*}
\mathbb{E}\Big(\sup_{t\in [0, T\wedge\tilde{\tau}^{\ep}_{R,M}]}|\hat{X}_{t}^{\ep}-\bar{X}_{t}|^{2}\Big)\leq\!\!\!\!\!\!\!\!&& C_{T,R,M,x,y}\left(\frac{\ep}{\delta}+\delta^{\gamma}\right).
\end{eqnarray*}
Hence the proof is complete.
\end{proof}

\vspace{0.2cm}
Now we can finish the proof of  our main result.

\vspace{0.2cm}

\noindent\textbf{Proof of Theorem \ref{main result 1}} Taking $\delta=\ep^{\tilde\gamma}$ with $\tilde\gamma=(1+\min\{2\gamma_1, \gamma_2, 1/2\})^{-1}$, Lemmas $\ref{DEX}$ and $\ref{ESX}$ imply that
\begin{eqnarray}
\mathbb{E}\left(\sup_{t\in [0, T\wedge\tilde{\tau}^{\ep}_{R,M}]}|X^{\ep}_{t}-\bar{X}_{t}|\right)
\leq\!\!\!\!\!\!\!\!&&\mathbb{E}\left(\sup_{t\in [0, T\wedge\tilde{\tau}^{\ep}_{R,M}]}|X^{\ep}_{t}-\hat{X}^{\ep}_{t}|
+\sup_{t\in [0, T\wedge\tilde{\tau}^{\ep}_{R,M}]}|\hat X^{\ep}_{t}-\bar{X}_{t}|\right)\nonumber\\
\leq\!\!\!\!\!\!\!\!&&C_{T,R,M,x,y}\left(\sqrt{{\ep}{\delta}^{-1}}+\delta^{\frac{1}{2}\min\{2\gamma_1, \gamma_2, 1/2\}}\right)\nonumber\\
\leq\!\!\!\!\!\!\!\!&&C_{T,R,M,x,y}\ep^{\frac{1-\tilde\gamma}{2}}.\label{before tau_R}
\end{eqnarray}
By Chebyshev's inequality, Lemmas \ref{MDY} and \ref{PMA}, we have
\begin{eqnarray}
&&\mathbb{E}\left(\sup_{t\in [0, T]}|X_{t}^{\ep}-\bar{X}_{t}| 1_{\{T>\tilde{\tau}^{\ep}_{R,M}\}}\right)\nonumber\\
\leq\!\!\!\!\!\!\!\!&&\left[\mathbb{E}\left(\sup_{t\in [0, T]}|X_{t}^{\ep}-\bar{X}_{t}|^{2}\right)\right]^{\frac{1}{2}}
\cdot\left[\mathbb{P}(T>\tilde{\tau}^{\ep}_{R,M})\right]^{\frac12} \nonumber\\
\leq\!\!\!\!\!\!\!\!&& \frac{C_{T,x,y}}{R^{1/2}}\left[\mathbb{E}\left(\sup_{t\in [0, T]}|X_{t}^{\ep}|+\sup_{t\in [0, T]}|\bar{X}_{t}|
+\int^T_0|Y^{\ep}_s|^{2\theta_2}ds+\int^T_0|\hat Y^{\ep}_s|^{4\theta_1\vee 2\theta_2}ds\right)\right]^{1/2}\nonumber\\
&&+ \frac{C_{T,x,y}}{M^{1/2}}\left[\mathbb{E}\int^T_0 [K_s(R)]^4ds\right]^{1/2}
\nonumber\\
\leq\!\!\!\!\!\!\!\!&&\frac{C_{T,x,y}}{R^{1/2}}+\frac{C_{T,R,x,y}}{M^{1/2}}. \label{after tau_R}
\end{eqnarray}
Hence, by \eref{before tau_R} and \eref{after tau_R}, we obtain that
\begin{eqnarray*}
\mathbb{E}\left(\sup_{t\in [0, T]}|X_{t}^{\ep}-\bar{X}_{t}|\right)\leq\!\!\!\!\!\!\!\!&& C_{T,R,M,x,y}\ep^{\frac{1-\tilde \gamma}{2}}+ \frac{C_{T,x,y}}{R^{1/2}}+\frac{C_{T,R,x,y}}{M^{1/2}}.
\end{eqnarray*}
Now,  letting $\ep\rightarrow 0$ firstly, $M\rightarrow \infty$ secondly, and $R\rightarrow \infty$ finally, we obtain that
\begin{eqnarray}
\lim_{\ep\rightarrow 0}\mathbb{E}\left(\sup_{t\in [0, T]}|X_{t}^{\ep}-\bar{X}_{t}|\right)=0.\label{L1}
\end{eqnarray}
On the  one hand, if $\lambda_1=0$ in \textbf{$({\bf H}_1)$} and \textbf{$({\bf A}_{\tilde \theta_1})$} holds with $\tilde \theta_1=\max\{4\theta_1, 2\theta_2+2, 2\theta_6, 4\alpha_2\}$, then by Lemma \ref{PMY} and \ref{PMA} we have
$$
\EE\left(\sup_{t\in [0, T]}|X_{t}^{\ep}-\bar{X}_{t}|^p\right)
\leq C_p\EE\left(\sup_{t\in [0, T]}|X_{t}^{\ep}|^p+\sup_{t\in [0, T]}|\bar{X}_{t}|^p\right)<\infty,\quad \forall p>0.
$$
On the other hand, if $\lambda_1>0$ in \textbf{$({\bf H}_1)$} and \textbf{$({\bf A}_{k})$} with some $k>\tilde \theta_2$ holds, where $\tilde \theta_2=\max\{4\theta_1, (2\theta_2+2), 2\theta_6, 4\alpha_2, \theta_5\theta_4, 2\alpha_1\theta_4\}$. By Lemmas \ref{PMY} and \ref{PMA}, for any $k'<k$ we have
$$
\EE\left(\sup_{t\in [0, T]}|X_{t}^{\ep}-\bar{X}_{t}|^{2k'/\theta_4}\right)
\leq C_k\EE\left(\sup_{t\in [0, T]}|X_{t}^{\ep}|^{2k'/\theta_4}+\sup_{t\in [0, T]}|\bar{X}_{t}|^{2k'/\theta_4}\right)<\infty.
$$
Hence by H\"{o}lder's inequality and \eref{L1}, it is easy to prove that \eref{R1} and \eref{R2} hold.
Therefore, the proof is complete.  \qed

\section{Examples}

In this section, we give several concrete examples to illustrate the applicability of our main result. We concentrate on cases which are not covered by previous papers in the literature. For simplicity, we only consider the 1-dimensional case, but one can
easily extend to the multi-dimensional case.

\begin{example} Let us consider the following slow-fast SDEs,
\begin{equation}\left\{\begin{array}{l}\label{EX1}
\displaystyle
d X^{\ep}_t = \left[-(X^{\ep}_t)^3+X^{\ep}_t+(Y^{\ep}_t)^3\right]dt+X^{\ep}_td W^{1}_t,\quad X^{\ep}_0=x\in \RR,\\
\displaystyle
d Y^{\ep}_t =\frac{1}{\ep}\left[-(X^{\ep}_t)^2 (Y^{\ep}_t)^3- 3Y^{\ep}_t-(Y^{\ep}_t)^5\right]dt+\frac{1}{\sqrt{\ep}}\left[\sin(X^{\ep}_t)+\sin(Y^{\ep}_t)\right]d W^{2}_t,\quad Y^{\ep}_0=y\in \RR,
\end{array}\right.
\end{equation}
where $\{W^{1}_t\}_{t\geq 0}$ and $\{W^{2}_t\}_{t\geq 0}$ are independent $1$-dimensional Brownian motions.

Let
$$
b(x,y)=-x^3+x+y^3, \quad \sigma(x)=x
$$
and
$$
f(x,y)=-x^2 y^3-3y-y^5, \quad g(x,y)=\sin x+\sin y.
$$
It is easy to verify that \textbf{$({\bf H}_1)$} with $\theta_4=6$, \textbf{$({\bf H}_2)$} and \textbf{$({\bf A}_k)$} with any $k\geq 2$ hold.

Hence,  by Theorem \ref{main result 1} for any $p>0$ we have
\begin{eqnarray*}
\lim_{\ep\rightarrow 0}\mathbb{E}\left(\sup_{t\in [0, T]}|X_{t}^{\ep}-\bar{X}_{t}|^p\right)=0,
\end{eqnarray*}
where $\bar{X}_{t}$ is the solution of the corresponding averaged equation.
\end{example}

\vspace{0.2cm}

\begin{example} Let us consider the following slow-fast SDEs,
\begin{equation}\left\{\begin{array}{l}\label{EX2}
\displaystyle
d X^{\ep}_t = \left[t^2X^{\ep}_t-(X^{\ep}_t)^3 (Y^{\ep}_t)^2+\lambda_1 Y^{\ep}_t\right]dt+\left(t^2+X^{\ep}_t\right)d W^{1}_t,\quad X^{\ep}_0=x\in \RR,\\
\displaystyle
d Y^{\ep}_t =\frac{1}{\ep}\left(\sqrt{t}X^{\ep}_t-8Y^{\ep}_t\right)dt+\frac{1}{\sqrt{\ep}}(t+X^{\ep}_t+Y^{\ep}_t)d W^{2}_t,\quad Y^{\ep}_0=y\in \RR,
\end{array}\right.
\end{equation}
where $\lambda_1\geq0$, $\{W^{1}_t\}_{t\geq 0}$ and $\{W^{2}_t\}_{t\geq 0}$ are independent $1$-dimensional Brownian motion.

Let
$$
b(t, x,y)=t^2x-x^3 y^2+\lambda_1 y, \quad \sigma(t, x)=t^2+x
$$
and
$$
f(t, x,y)=\sqrt{t}x-8y, \quad g(t,x,y)=t+x+y.
$$
It is easy to verify that \textbf{$({\bf H}_1)$} holds with $\theta_1=2,\theta_2=1,\theta_3=3,\theta_4=2,\theta_5=6,\theta_6=4$, $\gamma_1=1$, $Z_T=0$ and $K_t(R)=6R^2+2 t^4+2$; \textbf{$({\bf H}_2)$} holds with $\alpha_i=1, i=1,2,3,4$, and $\gamma_2=1/2$; \textbf{$({\bf A}_k)$} holds with any $2\leq k<17$.

Hence,  by Theorem \ref{main result 1}, if $\lambda_1=0$, for any $p>0$ we have
\begin{eqnarray*}
\lim_{\ep\rightarrow 0}\mathbb{E}\left(\sup_{t\in [0, T]}|X_{t}^{\ep}-\bar{X}_{t}|^p\right)=0.
\end{eqnarray*}
Moreover, if $\lambda_1>0$, for any $0<p<17$ we have
\begin{eqnarray*}
\lim_{\ep\rightarrow 0}\mathbb{E}\left(\sup_{t\in [0, T]}|X_{t}^{\ep}-\bar{X}_{t}|^p\right)=0,
\end{eqnarray*}
where $\bar{X}_{t}$ is the solution of the corresponding averaged equation.
\end{example}

\vspace{0.2cm}

\begin{example} Let us consider the following slow-fast SDEs,
\begin{equation}\left\{\begin{array}{l}\label{EX3}
\displaystyle
d X^{\ep}_t = \left[ -|\sin(W^1_t)|(X^{\ep}_t)^3+Y^{\ep}_t\right]dt+X^{\ep}_td W^{1}_t,\quad X^{\ep}_0=x\in \RR,\\
\displaystyle
d Y^{\ep}_t =\frac{1}{\ep}\left[X^{\ep}_t-8Y^{\ep}_t\right]dt+\frac{1}{\sqrt{\ep}}Y^{\ep}_td W^{2}_t,\quad Y^{\ep}_0=y\in \RR,
\end{array}\right.
\end{equation}
where $\{W^{1}_t\}_{t\geq 0}$ and $\{W^{2}_t\}_{t\geq 0}$ are independent $1$-dimensional Brownian motions.

Assume that
$$
b(t,x,y,\omega)=-|\sin(W^1_t(\omega))|x^3+ y, \quad \sigma(x)=x
$$
and
$$
f(x,y)=x-8y, \quad g(x,y)=y.
$$
It is easy to verify that \textbf{$({\bf H}_1)$} holds with $\theta_1=0,\theta_2=1,\theta_3=3,\theta_4=2,\theta_5=3,\theta_6=1$, $\gamma_1<1/2$, $K_t(R)=6R^2+1$ and $Z_T=\sup_{0\leq s<t\leq T}\frac{|W^1_t-W^1_s|}{|t-s|^{\gamma_1}}$ with $\EE Z^2_T<\infty$ by Kolmogorov's continuity theorem ; \textbf{$({\bf H}_2)$} holds with $\alpha_i=1, i=1,2,3,4$; \textbf{$({\bf A}_k)$} holds with any $2\leq k<17$.

Hence, by Theorem \ref{main result 1}, for any $0<p<17$ we have
\begin{eqnarray*}
\lim_{\ep\rightarrow 0}\mathbb{E}\left(\sup_{t\in [0, T]}|X_{t}^{\ep}-\bar{X}_{t}|^p\right)=0,
\end{eqnarray*}
where $\bar{X}_{t}$ is the solution of corresponding averaged equation.
\end{example}

\section{Appendix}

In this section, using the classical result of Krylov (cf. \cite[Theorem 3.1.1]{LR}), we prove the existence and uniqueness of solutions to system (\ref{Equation}) and the corresponding averaged equation.

\subsection{Proof of Theorem \ref{main}}
\begin{proof}
We denote
$$
Z^{\ep}_t=\left(
                               \begin{array}{c}
                                 X^{\ep}_t \\
                                 Y^{\ep}_t \\
                               \end{array}
                             \right)
,\quad \tilde{b}^{\ep}(t,x,y)=\left(
                             \begin{array}{c}
                               b(t,x,y) \\
                               \frac{1}{\ep}f(t,x,y) \\
                             \end{array}
                           \right)
$$
and
$$
\tilde{\sigma}^{\ep}(t,x,y)=\left(
                                  \begin{array}{cc}
                                    \sigma(t,x) & 0 \\
                                    0 & \frac{1}{\sqrt{\ep}}g(t,x,y) \\
                                  \end{array}
                                \right)
, \quad W_t=\left(
             \begin{array}{c}
               W^1_t \\
               W^2_t \\
             \end{array}
           \right).
$$
Then system \eref{Equation} can be rewritten as the following equation
\begin{equation}
 dZ^{\ep}_t=\tilde{b}^{\ep}(t, Z^{\ep}_t)dt+\tilde{\sigma}^{\ep}(t, Z^{\ep}_t)d W_t,\quad
Z^{\ep}_{0}=\left(
                      \begin{array}{c}
                        x \\
                        y \\
                      \end{array}
                    \right)
.\label{Eq2}
\end{equation}
Under the assumptions \textbf{$({\bf H}_1)$} and \textbf{$({\bf H}_2)$}, we intend to prove the coefficients in Eq. \eref{Eq2} satisfy the local weak monotonicity and weak coercivity conditions in \cite[Theorem 3.1.1]{LR}.

In fact, for any $t, R>0$, $z_i=(x_i, y_i)\in \RR^{n}\times \RR^m$ with $|z_i|\leq R$, $i=1,2$,
\begin{eqnarray*}
&&2\langle \tilde{b}^{\ep}(t,z_1)-\tilde{b}^{\ep}(t,z_2), z_1-z_2\rangle+\|\tilde{\sigma}^{\ep}(t,z_1)-\tilde{\sigma}^{\ep}(t,z_2)\|^2\\
\leq\!\!\!\!\!\!\!\!&&2\langle b(t,x_1, y_1)-b(t,x_2, y_2), x_1-x_2\rangle+\|\sigma(t,x_1)-\sigma(t,x_2)\|^2\\
&&+\frac{2}{\ep}\langle f(t,x_1, y_1)-f(t,x_2, y_2), y_1-y_2\rangle+\frac{1}{\ep}\|g(t,x_1, y_1)-g(t,x_2, y_2)\|^2\\
\leq\!\!\!\!\!\!\!\!&&2| b(t, x_1, y_1)-b(t, x_2, y_1)|\cdot|x_1-x_2|+\|\sigma(t, x_1)-\sigma(t, x_2)\|^2\\
&&+2|b(t, x_2, y_1)-b(t, x_2, y_2)|| x_1-x_2|\\
&&+\frac{2}{\ep}\langle f(t,x_1, y_1)-f(t,x_1, y_2), y_1-y_2\rangle+\frac{1}{\ep}\|g(t,x_1, y_1)-g(t,x_1, y_2)\|^2\\
&&+\frac{1}{\ep}\|g(t,x_1, y_2)-g(t,x_2, y_2)\|^2+\frac{2}{\ep}\|g(t,x_1, y_1)-g(t,x_1, y_2)\|\|g(t,x_1, y_2)-g(t,x_2, y_2)\|\\
&&+\frac{2}{\ep}|f(t,x_1, y_2)-f(t,x_2, y_2)| |y_1-y_2|\\
\leq\!\!\!\!\!\!\!\!&& K_t(R)(1+R^{\theta_1})|x_1-x_2|^2+2(2R^{\theta_2}+K_t(1)+R^{\theta_3})|y_1-y_2|\cdot|x_1-x_2|\\
&&+\frac{C_t}{\ep}(1+2R^{\alpha_1}+R^{\alpha_2})|x_1-x_2||y_1-y_2|+\frac{C_t}{\ep}|z_1-z_2|^2\\
\leq\!\!\!\!\!\!\!\!&& C_R\left[K_t(R)+K_t(1)+\frac{C_t}{\ep}\right]|z_1-z_2|^2.
\end{eqnarray*}
Furthermore, let $\ep_0=\frac{\lambda_2}{\lambda_1}$ if $\lambda_1>0$, and $\ep_0=1$ otherwise. Then for any $\ep\in(0,\ep_0)$
\begin{eqnarray*}
&&2\langle \tilde{b}^{\ep}(t, z_1), z_1\rangle+\|\tilde{\sigma}^{\ep}(t, z_1)\|^2\\
\leq\!\!\!\!\!\!\!\!&&2\langle b(t, x_1, y_1), x_1\rangle+\|\sigma(t, x_1)\|^2+\frac{2}{\ep}\langle f(t,x_1, y_1), y_1\rangle+\frac{1}{\ep}\|g(t,x_1, y_1)\|^2\\
\leq\!\!\!\!\!\!\!\!&&K_t(1)(1+|x_1|^2)+\lambda_1|y_1|^{\theta_4}+K_t(1)+C|x_1|^2-\frac{\lambda_2|y_1|^{\theta_4}}{\ep}+\frac{C_t}{\ep}(1+|x_1|^{\frac{4}{\theta_4}})\\
\leq\!\!\!\!\!\!\!\!&& C\left[2K_t(1)+\frac{C_t}{\ep}\right](1+|z_1|^2).
\end{eqnarray*}
Let
$$K^{\ep}_t(R):=C_R\left[K_t(R)+K_t(1)+\frac{C_t}{\ep}\right].$$
Then by the definition of $K_t(R)$, it is easy to see that $K^{\ep}_t(R)$ is an $\RR_{+}$-valued adapted process and  for all $R, T$, $\ep\in(0,\ep_0)$,
$$
\int^T_0 K^{\ep}_t(R)dt<\infty.
$$
Hence by \cite[Theorem 3.1.1]{LR}, there exists a unique solution $\{(X^{\ep}_t,Y^{\ep}_t), t\geq 0\}$ to system (\ref{Equation}).
The proof is complete.
\end{proof}

\subsection{Proof of Lemma \ref{PMA}}

\begin{proof}
It is sufficient to check that the coefficients of Eq. (\ref{3.1}) satisfy the following conditions:

For any $t\geq0, x_1,x_2\in\RR^n, R>0$ with $|x_i|\leq R$,
\begin{eqnarray}
2|\bar{b}(t,x_1)-\bar{b}(t,x_2)||x_1-x_2|+\|\sigma(t,x_1)-\sigma(t,x_2)\|^2\leq \bar K_t(R)|x_1-x_2|^2\label{4.15}
\end{eqnarray}
and
\begin{eqnarray}
2\langle x_1, \bar{b}(t,x_1)\rangle+\|\sigma(t,x_1)\|^2\leq \bar K_t(1)(1+|x_1|^{2}),\label{4.16}
\end{eqnarray}
where $\bar K_t(R)$ is an $\RR_{+}$-valued adapted process and for all $R, T$>0,
$$
\int^T_0 \bar K_t(R)dt<\infty.
$$
Then Eq.(\ref{3.1}) has a unique solution and (\ref{3.9}) can be easily obtained by following the same arguments as in Lemma \ref{PMY}(i).

In fact, by Proposition \ref{Rem 4.1} and Lemma \ref{L3.7} we have
\begin{eqnarray*}
&&2|\bar{b}(t,x_1)-\bar{b}(t,x_2)||x_1-x_2|+\|\sigma(t,x_1)-\sigma(t,x_2)\|^2\\
\leq\!\!\!\!\!\!\!\!&&2\left|\int_{\RR^m} b(t,x_1,z)\mu^{t,x_1}(dz)-\int_{\RR^m} b(t,x_2,z)\mu^{t,x_2}(dz)\right||x_1-x_2|+\|\sigma(t,x_1)-\sigma(t,x_2)\|^2\\
\leq\!\!\!\!\!\!\!\!&&2\left[\left|\int_{\RR^m} b(t,x_1,z)\mu^{t,x_1}(dz)-\tilde\EE b(t,x_1, Y^{x_1,0}_s)\right|+\left|\tilde \EE b(t,x_2, Y^{x_2,0}_s)-\int_{\RR^m} b(t,x_2,z)\mu^{t,x_2}(dz)\right|\right]|x_1-x_2|\\
&&+2\tilde \EE \left|b(t,x_1, Y^{t,x_1,0}_s)-b(t,x_2, Y^{t,x_1,0}_s)\right||x_1-x_2|+\|\sigma(t,x_1)-\sigma(t,x_2)\|^2\\
&&+2\tilde\EE\left|b(t,x_2, Y^{t,x_1,0}_s)-b(t,x_2, Y^{t,x_2,0}_s)\right||x_1-x_2|\\
\leq\!\!\!\!\!\!\!\!&&C_t e^{-\frac{\beta s}{2}}\left[(K_t(1))^2+1+|x_1|^{(\frac{2\theta_2+2}{\theta_4})\vee(\frac{\theta_3\theta_4+2}{\theta_4})}+|x_2|^{(\frac{2\theta_2+2}{\theta_4})\vee(\frac{\theta_3\theta_4+2}{\theta_4})}\right]\\
&&+C|x_1-x_2|^2 K_t(R)\tilde \EE(1+|Y^{t,x_1,0}_s|^{\theta_1})\\
&&+|x_1-x_2|\tilde \EE\left[(|Y^{t,x_1,0}_s|^{\theta_2}+|Y^{t,x_2,0}_s|^{\theta_2}+|x_2|^{\theta_3}+K_t(1))|Y^{t,x_1,0}_s-Y^{t,x_2,0}_s|\right]\\
\leq\!\!\!\!\!\!\!\!&&C_t e^{-\frac{\beta s}{2}}\left[(K_t(1))^2+1+|x_1|^{(\frac{2\theta_2+2}{\theta_4})\vee(\frac{\theta_3\theta_4+2}{\theta_4})}+|x_2|^{(\frac{2\theta_2+2}{\theta_4})\vee(\frac{\theta_3\theta_4+2}{\theta_4})}\right]\\
&&+C_{t,R} |x_1-x_2|^2 K_t(R)+C_{t,R} (1+K_t(1))|x_1-x_2|^2.
\end{eqnarray*}
Then letting $s\rightarrow \infty$, we obtain
\begin{eqnarray*}
2|\bar{b}(t,x_1)-\bar{b}(t,x_2)||x_1-x_2|+\|\sigma(t,x_1)-\sigma(t,x_2)\|^2\leq  C_{t,R}\left[K_t(R)+K_t(1)+1\right]|x_1-x_2|^2.
\end{eqnarray*}
Moreover,  by \eref{F3.5} we have
\begin{eqnarray*}
&&2\langle\bar{b}(t,x_1), x_1\rangle+\|\sigma(t, x_1)\|^2\\
=\!\!\!\!\!\!\!\!&& \int_{\RR^m} \left[\left\langle 2b(t,x_1,z), x_1\right\rangle+\|\sigma(t, x_1)\|^2\right]\mu^{t,x_1}(dz)\nonumber\\
=\!\!\!\!\!\!\!\!&&\int_{\RR^m} K_t(1)\left(1+|x_1|^2+\lambda_1|z|^{\theta_4}\right)\mu^{t,x_1}(dz)\\
\leq\!\!\!\!\!\!\!\!&&C_t K_t(1)(1+|x_1|^2).
\end{eqnarray*}
Then \eref{4.15} and \eref{4.16} hold by taking
\begin{eqnarray*}
\bar K_t(R):=C_{t,R}\left[K_t(R)+K_t(1)+1\right].
\end{eqnarray*}
By the definition of $K_t(R)$, it is easy to see that $\bar K_t(R)$ is an $\RR_{+}$-valued adapted process and for all $R, T$>0,
$$\int^T_0 \bar K_t(R)dt<\infty. $$
Hence by \cite[Theorem 3.1]{LR}, there exists a unique solution $\{\bar{X}_t, t\geq 0\}$ to Eq. (\ref{3.1}). The estimate \eref{3.9} can be proved by the same arguments as in Lemma \ref{PMY}. Therefore, the proof is complete.
\end{proof}

\vskip 0.2cm

\noindent\textbf{Acknowledgement}
This work was supported in part by NSFC (No. 11571147, 11601196, 11771187, 11822106, 11831014), the NSF of Jiangsu Province
(No. BK20160004),  NSF of the Higher Education Institutions of Jiangsu Province (No. 16KJB110006),  the QingLan Project and the PAPD Project of Jiangsu Higher Education Institutions. Financial support of the DFG through CRC 1283 is also gratefully acknowledged.

\end{document}